\theoremstyle{plain}
\newtheorem{theorem}{Theorem}
\newtheorem{proposition}[theorem]{Proposition} 
\newtheorem{lemma}[theorem]{Lemma}
\newtheorem{remark}[theorem]{Remark}
\newtheorem{corollary}[theorem]{Corollary}
\newtheorem{definition}[theorem]{Definition}
\newtheorem{example}[theorem]{Example}
\newtheorem{prob}[theorem]{Problem}
\numberwithin{theorem}{section}
\numberwithin{equation}{section}
\title[Twisted $S^1$ stability and PSC obstruction on fiber bundles]{Twisted $S^1$ stability and positive scalar curvature obstruction on fiber bundles}
\author{Shihang He}
\address{ School of Mathematical Sciences, Key Laboratory of Pure and Applied Mathematics, Peking University, Beijing, 100871, China}
\email{hsh0119@pku.edu.cn}
\subjclass[2020]{Primary 53C21, 53C23, 53C24 }
\begin{document}

	\begin{abstract}
		We establish several non-existence results of positive scalar curvature (PSC) on fiber bundles. We show that under an incompressible condition of the fiber, for $X^m$ a Cartan-Hadamard manifold or an aspherical manifold when $m=3$, the fiber bundle over $X^m\#M^m$ with the $K(\pi,1)$ fiber, $NPSC^+$(a manifold class including enlargeable and Schoen-Yau-Schick ones) fiber, or spin fiber of the non-vanishing Rosenberg index carries no PSC metric, with necessary dimension and spin compatible conditions imposed. Furthermore, we show that under a homotopically nontrivial condition of the fiber, the $S^1$ bundle over a closed 3-manifold admits a PSC metric if and only if its base space does. These partially answer a question of Gromov and extend some previous results of Hanke, Schick and Zeidler concerning PSC obstruction on fiber bundles.
	\end{abstract}
	
		\maketitle
	
	\section{Introduction}
	
	The $S^1$ factor plays important roles in the study of geometry and topology of manifolds with positive scalar curvature (PSC). The famous Georch conjecture, resolved in \cite{ref24} by Gromov-Lawson and in \cite{ref21} by Schoen-Yau, states $T^n$, completely constructed by $S^1$ factors, is such a typical manifold which admits no PSC metric. One of the major advantages of the $S^1$ factor is that it can be used in the descending argument combined with the minimal hypersurface technique. The descending argument was first proposed by Schoen-Yau in \cite{ref21}. They observed that a stable minimal hypersurface in the codimension 1 homology class of a PSC manifold must have positive Yamabe quotient. This codimension 1 homology class is always assigned to be the Poincare Dual of a 1-dimensional cohomology class, which can be regarded as an $S^1$ factor in an abstract sense. Later in \cite{ref25}, Schick noted that taking cup products of such an $S^1$ factor $(n-2)$ times will yield a complete topological condition to obstruct the existence of a PSC metric. By this observation, he gave a counterexample to the unstable Gromov-Lawson-Rosenberg conjecture. The construction in \cite{ref25} was later generalized to the notion of the Schoen-Yau-Schick (\textit{SYS}) manifold.
	
	The $S^1$ factors are also widely used in geometric problems related to PSC. It was first observed in \cite{ref4} by Fischer-Colbrie and Schoen that an appropriate warped product of a stable minimal hypersurface in a PSC manifold with $S^1$ carries a PSC metric. Later in \cite{ref6}, Gromov and Lawson generalized this argument to a torical symmetrization technique, obtaining Lipschitz estimates for non-zero degree maps from PSC manifolds to the Euclidean unit sphere. More recently, Gromov's seminal work \cite{ref7} revealed how this symmetrization approach leads to powerful width estimates for Riemannian bands, and many breakthroughs have been made following this strategy recently (For example, see \cite{ref3}\cite{ref9}\cite{ref26}\cite{ref27}).
	
	In \cite{ref18}, Rosenberg made the following stability conjecture for $S^1$ multiplication: {\it a closed manifold $M$ admits no PSC metric if and only if $M\times S^1$ admits no PSC metric, provided the dimension of $M$ is not equal to 4,} where the requirement of dimension not being $4$ is due to a counterexample coming from the Seiberg-Witten invariant (c.f.\cite{ref19}). In \cite{ref16}, by using the $\mu$-bubble argument, R{\"{a}}de completely solved this conjecture in dimensions no greater than 7 by establishing a width inequality for Riemannian bands obtained from the product of a general closed manifold and a closed interval. In fact, the following result is proved in \cite[Corollary 2.25]{ref16}: {\it Let $n\in\lbrace 2,3,4,6,7\rbrace$ and $Y^{n-1}$ be a closed connected oriented manifold which does not admit a metric of PSC. If $X = Y\times\lbrack -1,1\rbrack$ and $g$ is a Riemannian metric on $X$ with $Sc(X,g)\ge n(n-1)$, then $width(X,g)<\frac{2\pi}{n}$.} By using this, one can  verify Rosenberg's stability conjecture in dimensions no greater than $7$.
	
	However, all of the above constructions only make sense in the product case, that is, the trivial $S^1$ bundle case. In 1983,  B{\'{e}}rard-Bergery studied the geometry of manifolds with $S^1$-equivariant PSC metrics in \cite{ref1}. Among other things, he got a necessary and sufficient condition for the existence of an $S^1$-invariant PSC metric on a compact manifold with a free $S^1$-action (See Theorem C of \cite{ref1}).  As a direct corollary, $S^1$ bundles over a closed manifold with a PSC metric must admit a PSC metric. With those facts in mind, it is natural to ask the problems:
	\begin{prob}\label{mainprob}
		Let $B$ be a closed manifold which admits no PSC metric, and $E$ be an $S^1$-bundle over $B$. When does $E$ admit no PSC metric? More generally, if $E$ is a fiber bundle with the fiber $F$, where $F$ admits no PSC metric, when does $E$ admit no PSC metric?	
	\end{prob}
	Clearly, Problem \ref{mainprob} generalizes the Rosenberg stability conjecture, and its answer may not always be affirmative. To go further, we introduce the following conception.
	
	\begin{definition}\label{defn: twisted S^1 property}
		$\quad$
		\begin{itemize}
			\item Let $B$ be a closed manifold that admits no PSC metric. We say that $B$ has the \textsl{twisted $S^1$ stability property} if every $S^1$ bundle over $B$ carries no Riemannian metric of PSC.
			\item Let $B$ be a closed manifold that admits no PSC metric. We say that $B$ has the \textsl{dominated twisted $S^1$ stability property} if for every $S^1$ bundle $E$ over $B$ and every closed manifold $E'$ that admits a degree-$1$ map to $E$, $E'$ carries no Riemannian metric of PSC.
		\end{itemize}
	\end{definition}
	
	As a consequence of Definition \ref{defn: twisted S^1 property}, we have the following more specific problem:
	
	\begin{prob}\label{proba}
		Let $B$ be a closed manifold, when does it have the twisted $S^1$ stability property?
	\end{prob}
	
	In \cite{ref10} and \cite{ref22}, by developing a series of equivariant surgery theorems and equivariant bordism theorems, several existence results of PSC on $S^1$ manifolds are established. However, to our knowledge, there are not so many results concerning the non-existence of PSC on non-trivial bundles. The following examples present some progress in Problem \ref{mainprob}.
	
	\begin{example}\label{eg1}
		$\quad$
		
		(a) (\cite{HPS15}, Corollary 4.5) Let $E$ be an $F$-bundle over $B$, where $E$ is spin, $F$ has non-vanishing Rosenberg index and $B$ is a closed surface with $B\ne S^2, \mathbb{R}\mathbb{P}^2$. Then $E$ has non-vanishing Rosenberg index and hence carries no PSC metric.
		
		(b) (\cite{Zei}, Theorem 1.5) Let $E$ be an $F$-bundle over $B$, where $E$ is spin, $F$ has non-vanishing Rosenberg index and $B$ is an aspherical manifold whose fundamental group has finite asymptotic dimension. Then $E$ has non-vanishing Rosenberg index and hence carries no PSC metric.
		
		(c) (\cite{HS06}, Proposition 6.1) Let $E$ be an $F$-bundle over $B$, where $E$ is spin, $F$ and $B$ are enlargeable spin manifolds of even dimension. If the short exact sequence
		\begin{align*}
			0\longrightarrow\pi_1(F)\longrightarrow\pi_1(E)\longrightarrow\pi_1(M)\longrightarrow 0
		\end{align*}
		splits as
		\begin{align*}
			0\longrightarrow\pi_1(F)\longrightarrow\pi_1(F)\times\pi_1(B)\longrightarrow\pi_1(B)\longrightarrow 0,
		\end{align*}
		then $E$ has non-vanishing Rosenberg index and hence carries no PSC metric.
		
		(d) Let $E$ be a fiber bundle over $B$ with the fiber $F$ and $dimE\le 5$, where both $B$ and $F$ are $K(\pi,1)$ manifolds. Then $E$ admits no $PSC$ metric. In particular, the conclusion is  true for the case that $F = S^1$.
		
		(e) (\cite{ref24}, Proposition 4.3, \cite{ref7}, p.658, Example(c)) Let $E$ be a fiber bundle over $B$ with the fiber $F$, where both $B$ and $F$ are Cartan-Hadamard manifolds. Then $E$ is enlargeable and hence admits no PSC metric. In particular, the conclusion is  true for the case that $F = S^1$.
		
	\end{example} 
	
	Among these, (a)(b)(c) are obtained from the index theory, where (b) is a generalization of (a). (d) is a consequence of the recent progress in the aspherical conjecture in dimension no greater than 5 in \cite{ref3} and \cite{ref9}. Indeed, the total spaces in both (d) and (e) are aspherical. Therefore, the answer to Problem \ref{proba} should be positive if the base manifold is an Cartan-Hadamard manifold or an aspherical manifold of dimensions no greater than $4$. However, the following example in (see \cite[Example 9.2]{ref1} or \cite[Example 3.3]{ref20}) shows that the answer to Problem \ref{mainprob} may not always be positive in general cases. Namely, we have

	\begin{example}\label{eg2}
		Let $E$ be an $S^1$ bundle over the $K_3$ surface with non-divisible Euler class. It is not hard to show that $E$ is simply connected, thereby carrying a PSC metric due to \cite{ref5}. However, it's well known that the $K_3$ surface does not carry a PSC metric by the $\hat{A}$-genus obstruction.
	\end{example}
	
	Example \ref{eg2} demonstrates that neither the non-vanishing Rosenberg index property nor the PSC-metric obstruction property remains stable under the construction of $S^1$ bundles over certain manifolds. This fundamental difference between twisted and trivial bundles highlights the significance of Problem \ref{mainprob}. To formulate our first main result, we introduce the following class of manifolds:
	
	\begin{definition}\label{defn: NPSC+}
		Let $X$ be an oriented closed manifold. We say that $X$ is in the class $NPSC^+$, if for any oriented closed manifold $X'$ that admits a degree $1$ map $f:X'\longrightarrow X$, $X'$ admits no PSC metric.
	\end{definition}
	
	By the results in \cite{CLL}, \cite{ref6}, and \cite{ref21}, the following classes of manifolds are contained in $NPSC^+$:
	\begin{itemize}
		\item Enlargeable manifolds of dimension no greater than $7$ or with spin universal covering (see \cite[Section 5]{ref6} for the definition);
		\item Schoen-Yau-Schick (SYS) manifolds of dimension no greater than $7$ (see \cite[Section 5]{ref7} for the definition);
		\item All aspherical manifolds of dimension no greater than $5$.
	\end{itemize}
	Our first main result is as follows:

	\begin{theorem}\label{thm2}
		Let $E^n$ be a closed manifold which is a fiber bundle over $B^m$ with the fiber $F$, satisfying
		
		\begin{itemize}
			\item $B = X\#M$, where  $M$ is an arbitrary closed manifold;
			\item $X$ is a Cartan-Hadamard manifold or a $3$-dimensional $K(\pi,1)$ manifold;
			\item the homomorphism induced by the inclusion map $i_*:\pi_1(F)\longrightarrow\pi_1(E^n)$ is injective.
		\end{itemize}
		Assume that one of the following properties is satisfied:
		\begin{enumerate}
			\item $F$ is $K(\pi,1)$, $n\le 7$, $n\le m+5$;
			\item $F$ is in the class $NPSC^+$, $E$ is spin or $F$ is totally nonspin, $n\le 7$;
			\item $F$ is spin and has non-vanishing Rosenberg index, and $E$ is spin.
		\end{enumerate}
		Then $E^n$ admits no PSC metric. Moreover, in the case (1), any metric on $E^n$ with nonnegative scalar curvature is flat. In particular, the conclusion holds true if $F = S^1$.
	\end{theorem}
	
	Theorem \ref{thm2} has extended the scope of Example \ref{eg1} (a)(b)(c) above. In fact, the base space in Theorem \ref{thm2} includes all those of interest of dimension no greater than 3, since an oriented closed surface admits no PSC if and only if it is Cartan-Hadamard, while a closed 3-manifold admits no PSC if and only if it contains a $K(\pi,1)$ factor in its prime decomposition. We would like to point out that the incompressible condition here is natural since it automatically holds when the base space is aspherical. 
	
	Another point is that we are able to include a broader class of non-spin fibers when the total space has dimension no greater than 7. Over the past forty years, most results concerning topological obstruction to PSC have been obtained through Dirac operator methods, as exemplified by the works of \cite{ref6}\cite{Ros83}\cite{St92}. However, these results require certain spin conditionS on the manifold. Instead of using the Dirac operator, our major tool is based on the variational method, which is not widely used in deriving PSC topological obstruction results. This enables us to include the non-spin fibers, as well as certain spin \textit{SYS} fibers whose PSC obstruction could not be detected by any present Dirac operator method, as is shown in \cite{ref25}. The dimension 7 restriction in (1)(2) in Theorem \ref{thm2} arises in the regularity issue of minimal hypersurfaces, which is possible to be relaxed in light of recent works \cite{CMS23} and \cite{SY17}.
	
	We would like to make a brief remark here that, even in the trivial bundle (product) case, Problem \ref{mainprob} remains highly nontrivial. As discussed in §5 of \cite{ref7}, Gromov observed that the product of two enlargeable manifolds, or the product of an SYS manifold with an overtorical manifold admits no PSC metric. However, it was demonstrated in \cite{GH24} that there exist three SYS 4-manifolds whose product does admit a PSC metric (see \cite{GH24} for further examples). This indicates that, in full generality, Problem \ref{mainprob} presents substantial challenges at this stage of research.
	
	$\quad$

	We next focus on the case of $S^1$ bundles. We begin with the following general result for enlargeable manifolds.
	\begin{theorem}\label{thm3.5}
		Let $B$ be an $n$-dimensional enlargeable manifold whose universal covering is hyperspherical (see \cite{ref6}, Sec.5 for the definition) and $E$ an $S^1$ bundle over $B$ ($n+1\le 7$). If the fiber of $E$ is incompressible, then $E$ is enlargeable and carries no PSC metric.
	\end{theorem}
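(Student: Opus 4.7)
The strategy is to prove that the universal cover $\tilde E$ of $E$ is hyperspherical as a Riemannian manifold of dimension $n+1$; this gives the enlargeability of $E$ directly from the definition, and the PSC obstruction then follows in view of the dimension bound $n+1 \le 7$.

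The first step is to unpack the topological structure. From the long exact sequence of homotopy groups for the fibration $S^1 \to E \to B$, incompressibility of the fiber yields the short exact sequence $1 \to \mathbb{Z} \to \pi_1(E) \to \pi_1(B) \to 1$. Passing to universal covers, $\tilde E \to \tilde B$ is a principal $\mathbb{R}$-bundle, and because $\mathbb{R}$ is contractible this bundle is smoothly trivial; hence $\tilde E \cong \tilde B \times \mathbb{R}$ as smooth fiber bundles. Fix a principal $S^1$-connection $\alpha$ on $E$ with the fiber metric normalized so $|\alpha|=1$; pullback yields a horizontal-vertical decomposition of $T\tilde E$, and the submersion formula gives $|v|^2 = |\pi_* v|^2 + (v^V)^2$.

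For the map construction, fix $\epsilon>0$. By hypersphericity of $\tilde B$ there is an $\epsilon$-Lipschitz $f:\tilde B \to S^n$ of nonzero degree, equal to a basepoint $p_0$ outside a compact set $K$. Choose a local trivialization of the principal $S^1$-bundle over a neighborhood of $K$ providing a fiber coordinate $t$ with $|dt|\le C$ uniformly. Pick a smooth degree-one wrapping $g:\mathbb{R}\to S^1$ supported in $[-R,R]$ with $|dg|\le 2\pi/R$. Define
\[
  F:\tilde E \to S^n \wedge S^1 \cong S^{n+1},\qquad F(e) = f(\pi(e)) \wedge g(t(e)),
\]
extended by the basepoint outside the compact region $\pi^{-1}(K) \cap \{|t|\le R\}$ (where either factor already equals its basepoint, making this well-defined and continuous). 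A direct computation in the horizontal-vertical splitting yields
\[
  |dF(v)|^2 \le |df|^2 |\pi_* v|^2 + |dg|^2 |dt|^2 |v|^2 \le \bigl(\epsilon^2 + (2\pi C/R)^2\bigr)|v|^2,
\]
and choosing $R = 2\pi C/\epsilon$ gives $|dF| \le \sqrt{2}\,\epsilon$. The degree is $\deg(f)\cdot\deg(g)\ne 0$.

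The main obstacle is the uniform bound $|dt|\le C$ on the support region when the pulled-back $S^1$-bundle $\tilde E \to \tilde B$ is topologically nontrivial, precluding a global horizontal section. I would resolve this by either arranging the support of $f$ to lie in a contractible (hence trivializing) open subset of $\tilde B$, by precomposing $f$ with a suitable diffeomorphism, or by patching local trivializations whose transition 1-forms are uniformly controlled thanks to the compactness of $B$. Once this bound is secured, $F$ witnesses hypersphericity of $\tilde E$ and hence enlargeability of $E$. The non-existence of PSC then follows from the classical Gromov-Lawson theorem in the spin case, and from the $\mu$-bubble or minimal hypersurface descent in dimension $\le 7$ in the non-spin case.
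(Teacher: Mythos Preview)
Your strategy—pass to a cover on which the circle bundle trivializes, then build an $\epsilon$-Lipschitz map to $S^{n+1}$ by combining the hyperspherical map on $\tilde B$ with a fiber coordinate—is the paper's strategy as well, but the paper's execution differs in one important respect and is much shorter.

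Rather than the universal cover $\tilde E\cong\tilde B\times\mathbb R$, the paper works with the \emph{pullback $S^1$-bundle} $\tilde E'\to\tilde B$ of $E\to B$ along the universal covering $\tilde B\to B$; this $\tilde E'$ is itself a covering space of $E$. The key step, isolated as a separate lemma, is that incompressibility of the fiber forces the boundary map $\pi_2(\tilde B)\to\pi_1(S^1)$ to vanish, and since $\tilde B$ is simply connected the Hurewicz theorem then gives $e(\tilde E')=0$ in $H^2(\tilde B)$. Hence $\tilde E'\cong\tilde B\times S^1$ as principal $S^1$-bundles. This directly dissolves your stated ``main obstacle'': the pulled-back $S^1$-bundle over $\tilde B$ is \emph{not} topologically nontrivial—there is a global section. (Your own triviality argument, that principal $\mathbb R$-bundles are always trivial, is correct but weaker: it does not see where incompressibility enters.) The paper then asserts that the lifted metric $\tilde g$ on $\tilde B\times S^1$ and the product metric $\hat g=dt^2+\tilde g_0$ are $C^0$-equivalent, on the grounds that both are pullbacks of metrics on closed manifolds, and enlargeability of $E$ follows at once from the hypersphericity of $(\tilde B,\tilde g_0)$.

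So your obstacle is mis-located: the issue is not topological triviality (which holds) but whether the trivialization can be chosen with $|dt|$ uniformly bounded—equivalently, whether $\tilde g$ and $\hat g$ are $C^0$-equivalent. The paper treats this as immediate. Your instinct that something needs checking here is not unreasonable: the two metrics are lifted via \emph{different} covering maps (to $E$ and to $B\times S^1$ respectively), and a nonzero curvature form on $B$ obstructs a global \emph{horizontal} section even once topological triviality is known (think of the Heisenberg nilmanifold over $T^2$). Your three proposed fixes do not address this, since contractibility of the support or bounded transition $1$-forms still allow the connection potential to grow without bound over large regions of $\tilde B$. The paper's one-line $C^0$-equivalence claim is precisely what absorbs this difficulty.
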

	
	Several remarks are in order regarding the hypotheses of the theorem. First, the condition that the universal covering being hyperspherical would not always be guaranteed by the enlargeability of $B$, as demonstrated by examples in \cite{BH09}. Second, the incompressibility assumption is essential, as compressible fibers may reduce the macroscopic dimension (in the sense of \cite{Gr96}), thereby potentially destroying the enlargeability property of the total space. However, for some more concrete base manifolds, at least for all 3-manifolds which admit no PSC metric, we can establish the following result for possibly compressible fibers:
	
	\begin{theorem}\label{thm4}
		Let $B^n$ be a closed manifold and $E^{n+1}$ an $S^1$ bundle over $B$ ($n+1\le 7$), satisfying
		\begin{itemize}
			\item $B = X\#M$, where $M$ is an arbitrary closed manifold;
			\item $X$ is a Cartan-Hadamard manifold or a $3$-dimensional $K(\pi,1)$ manifold.
		\end{itemize}
		Assume that the $S^1$ fiber of $E$ is homotopically non-trivial, then $E$ admits no PSC metric, and any metric on $E$ with nonnegative scalar curvature is flat. Moreover, the conclusion remains true for manifolds admitting a degree 1 map to $E$.
	\end{theorem}
	
	In combination with the classification of $3$-dimensional closed PSC manifolds \cite{ref6}, we conclude that the twisted $S^1$ stability holds in dimension $3$ provided the fiber is homotopically nontrivial. This stands in sharp contrast to higher-dimensional cases (see Example \ref{eg: fail in n ge 4}). As a corollary, we obtain the following class of manifolds with the twisted $S^1$ stability property:
	\begin{corollary}\label{cor2}
		Let $B$ be a closed manifold. Then $B$ has the \textit{twisted $S^1$ stability property} if one of the following holds:
		\begin{enumerate}
			\item The universal covering of $B$ is hyperspherical, and the image of the Hurewicz homomorphism in $H_2(B)$ is zeros;
			\item $B$ is a $3$-manifold that admits no PSC metric, and it contains no $S^2\times S^1$ factor in its prime decomposition.
		\end{enumerate}
	\end{corollary}
	
	In \cite{ref7}, Gromov studied codimension 2 PSC obstruction on $B\times\mathbb{R}^2$ through quadratic decay inequalities, where $B$ is an enlargeable manifold. He subsequently posed the natural question of whether these results extend to nontrivial bundles, that is, whether analogous obstruction exist for $\mathbb{R}^2$ vector bundles over $B$. Gromov's question is fundamentally connected to PSC obstruction for $S^1$ bundles over $B$, the hypersurface separating the zero section $B$ from the infinity. This appeals to suitable versions of the twisted $S^1$ stability results. Once these are established, the codimension 2 obstruction is a consequence of the dimension reduction principle in scalar curvature geometry. We have the following theorem:
	
	\begin{theorem}\label{thm9}
		Let $X^n$ ($n\le 7$) be a 2-dimensional vector bundle over $B$ where $B$ has the \textit{dominated twisted $S^1$ stability property} in the sense of Definition \ref{defn: twisted S^1 property}, and let $Y$ be a non-compact manifold which admits a degree 1 map to $X$. Then for any metric on $Y$ there exists a constant $R_0$, such that
		\begin{align}\label{1}
			\inf_{x\in B(R)}Sc(x)\le \frac{4\pi^2}{(R-R_0)^2}.
		\end{align}
	\end{theorem}
	
	Similarly, we can prove the following corollary, which gives a partial affirmative answer to Gromov's question in dimension 3. Here we state the result in a more general form when $X$ deformation retracts to a codimension 2 submanifold $B$, which includes the case of particular interest where $X$ is a vector bundle over $B$ .
	
	\begin{corollary}\label{cor3}
		Let $X^5$ be an oriented 5-manifold admitting an oriented closed 3-manifold $B^3$ (which does not admit any metric of positive scalar curvature) as a deformation retract. Suppose the boundary of a tubular neighborhood of $B^3$ in $X^5$ is an $S^1$-bundle over $B^3$ whose fiber represents a non-trivial homotopy class in $X^5 \setminus B^3$. Then $X^5$ admits no uniformly PSC metric.
	\end{corollary}
	
	For more progress of the codimension 2 obstruction for PSC, the readers may refer to \cite{CRZ} and \cite{HPS15}. We leave more details of this question to Sec.5.
	
	$\quad$
	
	We conclude this section with some remarks on the key difficulties and ideas underlying the proofs of our main theorems. For the case of the $S^1$ bundle, a natural idea is to apply the Schoen-Yau descent argument \cite{ref21} to the homology class with non-zero intersection with the fiber. However, this requires the fiber to be homologically free, which is not satisfied in most situations (See Sec.4.3 for more detailed discussions). The base space presents additional complications, being inherently more difficult to handle than fibers due to its non-embeddability into the total space. As one could see, in Example \ref{eg1}, (a)(b)(d)(e) all focus on the case of aspherical base spaces. In cases (a) and (b), the asphericity ensures a certain topological condition. In the case of (e), the advantage of the aspherical condition forces the pullback bundle on the universal covering to be trivial and one could argue by directly constructing the hyperspherical map. However, in non-aspherical cases like the connected sum $X\#M$, this argument will definitely fail where no covering space yields topologically simple pullback bundles.
	
	To show how we overcome these difficulties, we present the main ideas appearing in the proof of Theorem \ref{thm2} and Theorem \ref{thm4} respectively. For Theorem \ref{thm2}, we introduce a new conception called the \textit{incompressible depth} and obtain its estimates in several cases. This estimate reveals that the fiber bundle cannot become arbitrarily "deep" in certain directions, yielding a contradiction if one assumes the existence of a PSC metric. This gives the proof of Theorem \ref{thm2}. There are two merits of doing so. First, it preserves the crucial geometric largeness property (analogous to the enlargeability argument).  Second, it effectively eliminates concerns arising from the bundle's topological twisting.
	
	For Theorem \ref{thm4}, we will derive a contradiction by demonstrating that the $S^1$ bundle must be \textit{wide} in specific directions, then applying the width estimate inequality for \textit{SYS} bands in \cite{ref7}. This requires us to verify the \textit{SYS} condition for a certain band. To be more precise, we have to show that a certain 2-homology class lies outside the image of the Hurewicz homomorphism. However, due to the topological complexity, it can hardly be achieved via direct computation. To achieve our goal, we will distinguish spherical classes by counting their intersection numbers with a certain codimension 2 submanifold, and show that the target homology class does not satisfy the same intersection property. These elements will complete the proof of Theorem \ref{thm4}.
	
	$\quad$
	
	The rest of the paper is arranged as follows. In Sec.2 we introduce the conception of \textit{incompressible depth} and obtain its estimates. Building on the depth estimates developed in Sec.2, we prove Theorem \ref{thm2} in Sec.3. The $S^1$ bundle case will be thoroughly discussed in Sec.4, where the proof of Theorem \ref{thm3.5} and Theorem \ref{thm4} are presented. In Sec.5, we study the codimension 2 obstruction of PSC in twisted settings, and give a partial answer to Gromov's question as an application of our results.
	
	\section{$\mu$-bubble and incompressible depth in manifolds with boundary}
	The classical obstruction to the existence of incompressible hypersurfaces in PSC manifolds was first established in the seminal works of Schoen--Yau \cite{SY1} and Gromov--Lawson \cite{ref6}. In recent years, this fundamental result has been extended in various directions by numerous authors \cite{CRZ,ref2,Zei}. 
	
	Building upon these developments and utilizing the $\mu$-bubble technique, we introduce in this section the incompressible depth and establish its estimates for manifolds with boundary that have uniformly PSC. We begin by recalling some basic notions and results concerning $\mu$-bubbles from \cite{ref23}.
	
	\begin{definition}\label{def1}
		A band is a connected compact manifold $M$ together with a decomposition:
		\begin{align*}
			\partial M = \partial M^+ \sqcup \partial M^-
		\end{align*}
		where $\partial_\pm M$ are two collections of boundary components of $M$. If $M$ is endowed with a Riemannian metric, we call such a band a Riemannian band. The width of the band is defined to be the distance between $\partial M^+$ and $\partial M^-$.  
	\end{definition}
	
	The following lemma is useful in the band width estimate:
	
	\begin{lemma}(\cite{ref23}, Lemma 4.1)\label{lem1}
		Assume $(M,g)$ is a smooth Riemannian band such that width$(M,g) > 2l$, then there exists a surjective smooth map $\phi: (M,g) \longrightarrow [-l,l]$ with $Lip(\phi)<1$ such that $\phi^{-1}(-l) = \partial_-M$ and $\phi^{-1}(l) = \partial_+M$.
	\end{lemma}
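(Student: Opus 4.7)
The plan is to promote the distance function from $\partial_- M$ to the required map in three steps. Let $d_-(x) := \mathrm{dist}_g(x, \partial_- M)$; this is $1$-Lipschitz with $d_- = 0$ exactly on $\partial_- M$ and $d_-(x) \ge w := \mathrm{width}(M,g) > 2l$ for every $x \in \partial_+ M$. Since $w > 2l$, fix $\varepsilon > 0$ small enough that $w > 2l + 3\varepsilon$ and select a smooth non-decreasing cut-off $h : [0,\infty) \to [-l,l]$ satisfying $h(0) = -l$, $h \equiv l$ on $[2l+2\varepsilon, \infty)$, $h$ strictly increasing on $(0, 2l+2\varepsilon)$, and $\sup h' < 1$. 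Such an $h$ exists because the required mean slope $2l/(2l+2\varepsilon)$ is strictly below $1$, so one has room to impose $\sup h' < 1$ while keeping $h$ strictly monotone on its transition interval.

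The composition $\phi_0 := h \circ d_-$ is then Lipschitz with constant strictly less than $1$, sends $\partial_- M$ to $-l$ and $\partial_+ M$ to $l$, and is surjective onto $[-l,l]$ by connectedness of $M$. Two defects remain: $\phi_0$ is merely Lipschitz rather than smooth, and its $+$-level set $\phi_0^{-1}(l) = \{d_- \ge 2l + 2\varepsilon\}$ may strictly contain $\partial_+ M$ (for instance if $M$ bulges past the width-realising segment).

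I would repair both defects by working in collar neighbourhoods $U_\pm \cong \partial_\pm M \times [0, \eta)$ for some small $\eta > 0$. On $U_+$ replace $d_-$ by an explicit smooth function that equals some value $a > 2l + 2\varepsilon$ exactly on $\partial_+ M$, is strictly decreasing in the collar coordinate, and smoothly matches $d_-$ on the inner boundary of $U_+$; on $U_-$ use the collar coordinate itself as the smooth replacement of $d_-$, so that the replacement vanishes precisely on $\partial_- M$. Away from $U_\pm$, smooth $d_-$ by mollification against a partition of unity subordinate to finitely many coordinate charts. For a sufficiently small mollification parameter the resulting smooth function $\tilde d$ still satisfies $\mathrm{Lip}(\tilde d) < 1 + \delta$ for an arbitrarily small $\delta > 0$, and after composition with $h$ the strict inequality $\mathrm{Lip}(h \circ \tilde d) < 1$ is preserved thanks to the $\varepsilon$-margin built into $h$. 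Gluing the two constructions via smooth bumps and composing with $h$ yields the desired smooth surjective $\phi : M \to [-l,l]$ with $\mathrm{Lip}(\phi) < 1$ and $\phi^{-1}(\pm l) = \partial_\pm M$.

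The delicate point, and the main obstacle, is the last step: simultaneously enforcing the exact preimage conditions $\phi^{-1}(\pm l) = \partial_\pm M$ and the strict Lipschitz bound after smoothing. The collar surgery avoids the interior plateau at $l$ produced by a naive truncation of $d_-$, while the $\varepsilon$-slack reserved from the width hypothesis absorbs the small Lipschitz losses introduced by mollification; everything else in the construction is essentially the standard distance-function-plus-cut-off recipe used in \cite{ref23}.
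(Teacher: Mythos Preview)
The paper does not supply a proof of this lemma; it is quoted from \cite{ref23} without argument. So the question reduces to whether your construction is correct on its own.

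There is a real gap in the step where you enforce $\phi^{-1}(l)=\partial_+M$. Your collar surgery on $U_+$ only modifies $\tilde d$ near $\partial_+M$, but the plateau $\{d_-\ge 2l+2\varepsilon\}$ can contain interior points far from both boundary components. Concretely, take two copies of $N\times[0,10]$ with $\partial_\pm M$ the slices at heights $0$ and $10$, and connect them by a $1$-handle of length $100$ attached near height $5$ in each copy. The resulting band has width $10$, yet points in the middle of the handle satisfy $d_-\approx 55$. For any admissible $l<5$ these points lie in $\{d_-\ge 2l+2\varepsilon\}$, they are nowhere near $U_+$, and your mollification leaves $\tilde d\approx 55$ there. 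After composing with $h$ they are sent to $l$, so $\phi^{-1}(l)$ properly contains $\partial_+M$ and the construction fails. No choice of $h$ built from $d_-$ alone can repair this: the function $d_-$ simply does not see $\partial_+M$.

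The standard remedy is to use $d_+(x)=\mathrm{dist}_g(x,\partial_+M)$ symmetrically. Since $d_-(x)+d_+(x)\ge w>2l$, one may pick $l'\in(l,w/2)$ and set
\[
\psi_0(x)=\min\bigl(d_-(x)-l',\,0\bigr)-\min\bigl(d_+(x)-l',\,0\bigr).
\]
The two summands have gradients supported on the disjoint sets $\{d_-<l'\}$ and $\{d_+<l'\}$, so $\psi_0$ is $1$-Lipschitz, takes values in $[-l',l']$, and satisfies $\psi_0^{-1}(\pm l')=\partial_\pm M$ exactly. Rescaling by $l/l'<1$ and then running your smoothing-and-collar routine (which now only has to handle regularity, not level-set geometry) produces the required smooth $\phi$ with $\mathrm{Lip}(\phi)<1$.
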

	
	With the function $\phi$ we can define the $\mu$-bubble functional on an $n$-dimensional Riemannian band $M$: For any smooth function $h:(-T,T)\longrightarrow \mathbb{R}$ with $0<T<l$, we define:
	
	\begin{align}\label{21}
		\mathcal{A}^h(\Omega) = \mathcal{H}^{n-1}(\partial^*\Omega)-\int_M(\chi_{\Omega}-\chi_{\Omega_0})h\circ \phi d\mathcal{H}^n,  \quad\Omega_0 = \lbrace\phi<0\rbrace,
	\end{align}
	where $\Omega$ is any Caccippoli set in the interior of $M$ with reduced boundary $\partial^*\Omega$ such that
	\begin{align}\label{22}
		\Omega\Delta\Omega_0 \subset\subset \mathcal{D}(h\circ \phi) = \lbrace -T<\phi<T\rbrace
	\end{align}
	and $\chi_\Omega$ is the characteristic function of the region $\Omega$. 
	
	The following lemma indicates the existence and regularity of the $\mu$-bubble.
	
	\begin{lemma}(\cite{ref23}, Proposition 2.1)\label{lem2}
		Assume that $\pm$T are regular values of $\phi$ and the function $h$ satisfies
		\begin{align*}
			\lim\limits_{t\to-T}h(t) = +\infty\quad and\quad \lim\limits_{t\to T}h(t) = -\infty.
		\end{align*}
		Then there exists a smooth minimizer $\hat{\Omega}$ for $\mathcal{A}^h$, such that $\hat{\Omega}\Delta\Omega_0\subset\subset\mathcal{D}(h\circ \phi)$ provided $n\le 7$.
	\end{lemma}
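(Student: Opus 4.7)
The plan is to combine the direct method of the calculus of variations with a barrier argument exploiting the blow-up behavior of $h$ at the endpoints $\pm T$, and then invoke classical regularity theory for prescribed-mean-curvature minimizers in low dimensions. The admissible class consists of Caccioppoli sets $\Omega\subset\operatorname{Int} M$ satisfying $\Omega\Delta\Omega_0\subset\subset\mathcal{D}(h\circ\phi)$; the coercivity of the perimeter part and the local integrability of $h\circ\phi$ away from $\{\phi=\pm T\}$ give a well-posed variational problem on this class.

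First I would take a minimizing sequence $\Omega_j$ and apply the standard BV compactness theorem to extract a subsequence converging in $L^1_{\mathrm{loc}}$ to some candidate $\hat\Omega$, using lower semicontinuity of perimeter together with Fatou's lemma on the bulk term (after splitting $h=h^+-h^-$ and noting that only one sign of the blow-up is costly near each end). The key step, and where I expect the main obstacle to lie, is to prevent the minimizer from pushing out to the end slices $\{\phi=\pm T\}$, so that the compact containment survives in the limit. For this I would use a foliation barrier: near the $-T$ end, pick $\delta>0$ small and compare any candidate $\Omega$ with the enlarged set $\Omega'=\Omega\cup\{\phi<-T+\delta\}$. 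Since $\phi$ is $1$-Lipschitz, the coarea formula bounds the perimeter created along the slice $\{\phi=-T+\delta\}\cap\Omega^c$ uniformly, while the bulk term gains $\int_{\Omega'\setminus\Omega}h\circ\phi\,d\mathcal{H}^n$, which is arbitrarily large because $h\to+\infty$. A symmetric competitor at the other end (removing $\{\phi>T-\delta\}$, where $h\to-\infty$ makes this energetically favorable) yields the complementary bound. These comparisons produce a uniform $\delta_0>0$ such that every almost-minimizer satisfies $\Omega\Delta\Omega_0\subset\{-T+\delta_0<\phi<T-\delta_0\}$, a property which then passes to the $L^1_{\mathrm{loc}}$ limit $\hat\Omega$.

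Once $\hat\Omega$ is an interior minimizer, a standard inner-variation argument shows it minimizes perimeter plus the smooth bulk term $-\int_M(\chi_\Omega-\chi_{\Omega_0})\,h\circ\phi\,d\mathcal{H}^n$ against all compactly supported perturbations, so its reduced boundary $\partial^*\hat\Omega$ satisfies the Euler–Lagrange equation of prescribed mean curvature $h\circ\phi$ in the weak sense. Classical regularity for almost-minimizers of perimeter (De Giorgi–Federer combined with Simons' theorem, equivalently Tamanini's $\varepsilon$-regularity for prescribed mean curvature) then gives smoothness of $\partial^*\hat\Omega$ off a singular set of Hausdorff dimension at most $n-8$. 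Since $n\leq 7$, this singular set is empty, $\partial\hat\Omega$ is a smooth closed hypersurface, and the required $\mu$-bubble is produced.
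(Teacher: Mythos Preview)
The paper does not supply its own proof of this lemma: it is quoted verbatim as Proposition~2.1 of \cite{ref23} and used as a black box. Your outline is the standard route to existence of $\mu$-bubbles and is essentially what one finds in \cite{ref23} and in the subsequent literature (e.g.\ Chodosh--Li): direct method on Caccioppoli sets, a barrier argument forcing uniform compact containment away from $\{\phi=\pm T\}$, then De~Giorgi--Federer--Simons regularity for almost-minimizers of perimeter with smooth prescribed mean curvature in ambient dimension $\le 7$. So there is nothing to compare against in this paper, and your plan is sound.

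One point deserves tightening. In your comparison at the $-T$ end you write that the bulk gain $\int_{\Omega'\setminus\Omega}h\circ\phi\,d\mathcal{H}^n$ is ``arbitrarily large because $h\to+\infty$'', but this is only large when $|\Omega'\setminus\Omega|$ is bounded below; if an almost-minimizer merely nibbles a set of tiny measure out of $\{\phi<-T+\delta\}$, the single comparison you wrote does not immediately beat the fixed perimeter cost of the slice. The usual fix is to run the comparison along the whole foliation $\{\phi=-T+s\}_{0<s<\delta_0}$: set $V(s)=|\{\phi<-T+s\}\setminus\Omega|$, use the coarea formula and the bound $|\nabla\phi|\le 1$ to relate $V'(s)$ to the slice area, and combine with $A^h(\Omega\cup\{\phi<-T+s\})\ge A^h(\Omega)-1$ to get a differential inequality forcing $V\equiv 0$ on an interval whose length depends only on $\sup_s \mathcal H^{n-1}(\{\phi=-T+s\})$ and the blow-up rate of $h$. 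Equivalently, one checks that for $s$ small enough the level sets $\{\phi=-T+s\}$ have mean curvature strictly below $h(-T+s)$ and are therefore strict barriers, so a maximum-principle argument gives the same uniform $\delta_0$. Either refinement closes the gap; the rest of your sketch (lower semicontinuity, passage to the limit, regularity in dimension $\le 7$) is correct as stated.
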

	
	The following class $\mathcal{C}_{\mathrm{deg}}$ was introduced in \cite{ref2}.
	
	\begin{definition}(\cite{ref2})\label{def2}
		The class $\mathcal{C}_{\mathrm{deg}}$ consists of all closed aspherical manifolds $M$ satisfying the following property:
		\begin{itemize}
			\item If a manifold $N$ admits a continuous map $f \colon N \to M$ of non-zero degree, then $N$ does not admit a PSC metric.
		\end{itemize}
	\end{definition}
	
	Given an arbitrary class $\mathcal{C}$ of closed manifolds, we now introduce the notion of $\mathcal{C}$-incompressible depth.
	
	\begin{definition}\label{def3}
		Let $M$ be a compact Riemannian manifold with boundary.
		\begin{itemize}
			\item Define $\mathcal{S}_{\mathcal{C}}$ as the collection of all embedded, two-sided, incompressible hypersurfaces $\Sigma \subset M$ such that $\Sigma \in \mathcal{C}$.
			\item The $\mathcal{C}$-incompressible depth of $M$ is defined by
			\[
			\sup_{\Sigma \in \mathcal{S}_{\mathcal{C}}} d(\Sigma, \partial M),
			\]
			where $d(\Sigma, \partial M)$ denotes the distance from $\Sigma$ to the boundary $\partial M$.
		\end{itemize}
		In other words, this is the maximal distance achievable by an incompressible hypersurface in $\mathcal{C}$ from the boundary of $M$.
	\end{definition}

	The rest of the section is devoted to estimates of the incompressible depth in manifolds with boundary. We begin with the case of the $\mathcal{C}_{deg}$ class.
	
	\begin{proposition}\label{thm6}
		Let $M$ be a Riemannian manifold with boundary such that $\mathcal{S}_{\mathcal{C}_{deg}}\ne\varnothing$. Assume that the scalar curvature is greater than or equal to $1$, then the $\mathcal{C}_{deg}$ incompressible depth of $M$ is no greater than $\pi$.
	\end{proposition}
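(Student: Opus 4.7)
The strategy is to argue by contradiction. Suppose there exists $\Sigma \in \mathcal{S}_{\mathcal{C}_{deg}}$ with $d(\Sigma, \partial M) > \pi$; I will produce a closed manifold that carries a metric of positive scalar curvature and admits a non-zero degree map to $\Sigma$, contradicting the definition of $\mathcal{C}_{deg}$.

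The first step is a topological preparation. Using the incompressibility of $\Sigma$, pass to the covering $p\colon \tilde M \to M$ corresponding to the subgroup $i_*\pi_1(\Sigma)\le\pi_1(M)$. Then $\pi_1(\tilde M)\cong\pi_1(\Sigma)$, a component $\tilde\Sigma$ of $p^{-1}(\Sigma)$ maps diffeomorphically to $\Sigma$, and the asphericity of $\Sigma$ (which is built into $\mathcal{C}_{deg}$) provides a classifying map $f\colon\tilde M\to\Sigma$ whose restriction to $\tilde\Sigma$ has degree $\pm 1$. Since $p$ is a local isometry, $d_{\tilde M}(\tilde\Sigma,\partial\tilde M)\ge d_M(\Sigma,\partial M)>\pi$. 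Choose a small $\epsilon>0$, and after a $C^0$-small smoothing of the signed distance $\rho$ to $\tilde\Sigma$ on a fixed side (so that $\pi+\epsilon$ is a regular value), set $N:=\{0\le\rho\le\pi+\epsilon\}$. Then $N$ is a compact Riemannian band with $\partial_- N=\tilde\Sigma$, scalar curvature $\ge 1$, and width strictly greater than $\pi$.

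The second step is the $\mu$-bubble construction on $N$. Lemma \ref{lem1} yields a $1$-Lipschitz surjection $\phi\colon N\to[-l,l]$ with $\phi^{-1}(-l)=\tilde\Sigma$ and $2l$ lying between $\pi$ and $\operatorname{width}(N)$. Choose $\pi/2<T<l$ and a smooth $h\colon(-T,T)\to\mathbb{R}$ blowing up to $+\infty$ at $-T$ and to $-\infty$ at $T$, of the standard ``tangent'' shape calibrated to $\operatorname{scal}\ge 1$. Lemma \ref{lem2} then produces a smooth minimizer $\hat\Omega$ of $\mathcal{A}^h$ whose reduced boundary $\hat\Sigma:=\partial^*\hat\Omega$ is a smooth closed hypersurface contained in $\operatorname{Int}(N)$. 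The stability inequality for $\hat\Omega$, combined with $\operatorname{scal}_N\ge 1$ and the chosen $h$, yields by the standard conformal change (valid since $\dim\hat\Sigma\le 6$) a metric of positive scalar curvature on $\hat\Sigma$.

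In the final step, $\hat\Omega$ is a cobordism in $N$ between $\tilde\Sigma$ and $\hat\Sigma$, so
\[
f_*[\hat\Sigma] = f_*[\tilde\Sigma] = \pm[\Sigma] \in H_{n-1}(\Sigma).
\]
Hence $f|_{\hat\Sigma}\colon\hat\Sigma\to\Sigma$ has total degree $\pm 1$, and some connected component $\hat\Sigma_0$ admits a non-zero degree map to $\Sigma$. Together with the PSC metric from the previous step, this contradicts $\Sigma\in\mathcal{C}_{deg}$. I expect the main obstacle to be precisely this last topological step: ensuring that the $\mu$-bubble $\hat\Sigma$ still ``remembers'' the topology of $\Sigma$. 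This is exactly what forces the passage to the cover (so that $\tilde\Sigma$ is a single clean copy of $\Sigma$) together with the use of asphericity (to produce $f$); without either ingredient, the total degree of $f|_{\hat\Sigma}$ could a priori collapse to zero, and the argument would break down. Secondary technical points -- smoothing the distance function to obtain a regular level, and the explicit verification that the stability calculation yields PSC -- are routine within the $\mu$-bubble framework of Lemmas \ref{lem1}-\ref{lem2}.
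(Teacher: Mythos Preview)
Your overall strategy---pass to the cover with $\pi_1(\tilde M)\cong\pi_1(\Sigma)$, use asphericity to build a classifying map $f\colon\tilde M\to\Sigma$, run a $\mu$-bubble, and then compare degrees via a cobordism---matches the paper exactly. The issue is in the numerology of your band.

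You take the \emph{one-sided} region $N=\{0\le\rho\le\pi+\epsilon\}$, which has width just over $\pi$, and then seek $h$ on $(-T,T)$ with $\pi/2<T<l$ ``of tangent shape calibrated to $\operatorname{scal}\ge 1$''. But no such $h$ exists for $T$ this small. Concretely, if $h=c\tan(bt)$ blows up at $\pm T$ (so $bT=\pi/2$), the stability computation requires
\[
h^2-2|h'|+1=(c^2-2bc)\tan^2(bt)+(1-2bc)\ge 0,
\]
forcing $c\ge 2b$ and $bc\le\tfrac12$, hence $b\le\tfrac12$ and $T=\pi/(2b)\ge\pi$. So the band must have width at least $2\pi$, not $\pi$; with your $N$ the second variation does \emph{not} yield PSC on $\hat\Sigma$, and the contradiction fails.

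The paper avoids this by exploiting that $d(\Sigma,\partial M)>\pi$ gives room on \emph{both} sides of $\tilde\Sigma$: it first proves that $\tilde\Sigma$ separates $\tilde M$ (a step you omit), then takes the two-sided signed distance $d_0$ and the band $V=\{|d_0|\le T\}$ with $\pi<T<l$, of total width $>2\pi$. On this band the explicit choice $\mu=\tan(d_0/2)$ satisfies $\mu^2-2|\nabla\mu|+1\ge 0$ exactly, and the $\mu$-bubble $W$---which is homologous to $\tilde\Sigma$ in $V$---carries PSC, giving the contradiction with $\deg(f|_W)=1$. Your argument is easily repaired by making this same two-sided construction.
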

	
	\begin{proof}
		Assume the proposition is not true, then we can find a compact Riemannian manifold $M$ with boundary satisfying $Sc(M)\ge 1$ and a hypersurface $\Sigma$ in the class $\mathcal{S}_{\mathcal{C}_{deg}}$ in Definition \ref{def3}, such that:
		\begin{align*}
			d(\Sigma,\partial M) > l > \pi.
		\end{align*}
		Since $\pi_1(\Sigma)\longrightarrow\pi_1(M)$ is injective, we can find a covering $\tilde{M}$ of $M$ satisfying $\pi_1(\Tilde{M}) = \pi_1(\Sigma)$, while the distance between $\Sigma$ and the boundary of $\tilde{M}$ is still larger than $l$. The inclusion map $\Tilde{i}$ then induces an isomorphism $\Tilde{i}_*:\pi_1(\Sigma)\longrightarrow\pi_1(\Tilde{M})$. Since $\Sigma$ is aspherical, by \cite{ref11} (Theorem 1B.9), there exists a continuous map $j:\Tilde{M}\longrightarrow\Sigma$, such that the homomorphism $j_*:\pi_1(\Tilde{M})\longrightarrow\pi_1(\Sigma)$ equals $\Tilde{i}_*^{-1}$. Consequently, $j_*\circ i_*$ equals the identity map on $\pi_1(\Sigma)$. Once again, due to \cite{ref11} (Theorem 1B.9), we conclude that $j\circ i$ is homotopic to the identity.
		
		Next, we show that $\Sigma$ separates $\tilde{M}$ into two components. Suppose not, then $\tilde{M}\setminus \Sigma$ must be connected. Thus there exists a closed curve $\gamma$ that intersects $\Sigma$ transversally at exactly one point. Meanwhile, since the induced map $\tilde{i}_*\colon \pi_1(\Sigma) \to \pi_1(\tilde{M})$ is an isomorphism, there exists a curve $\alpha$ in $\Sigma$ such that $\tilde{i}(\alpha)$ is homotopic to $\gamma$. Noticing that $\Sigma$ is two-sided in $\tilde{M}$, we may perturb $\alpha$ slightly so that the intersection number between $\alpha$ and $\Sigma$ vanishes. However, this contradicts the homotopy invariance of intersection numbers.
		
		Let $d_0$ be a smooth approximation of the signed distance function to $\Sigma$, with the sign with respect to two connected components of $\tilde{M}\backslash \Sigma$. Let $\pi<T<l$ be a number such that $\pm T$ are both regular values of $d_0$, and we obtain that
		\begin{equation}\label{23}
			\begin{split}
				&V = \lbrace x: -T\le d_0 \le T\rbrace\\
				\partial_+V &= d_0^{-1}(T),\quad \partial_-V = d_0^{-1}(-T)
			\end{split}   
		\end{equation}
		forms a compact Riemannian band since
		\begin{align}
			V = \lbrace exp_z(tN(z)): -T\le t\le T, z\in \Sigma\rbrace.
		\end{align}
		Here $N(z)$ represents the normal vector of $\Sigma$ at $z$.
		
		We can now use the $\mu$-bubble method to derive a contradiction. Define
		\begin{align*}
			\mu = \tan(\frac{1}{2}d_0)
		\end{align*}
		on $V$. Here we note that $d_0$ serves as $\phi$ in Lemma \ref{lem2}. Consider the $\mu$-bubble functional as in (\ref{21}) and (\ref{22}):
		\begin{align*}
			\mathcal{A}(\Omega) = \mathcal{H}^{n-1}(\partial^*\Omega)-\int_V(\chi_{\Omega}-\chi_{\Omega_0})\mu d\mathcal{H}^n.
		\end{align*}
		By the existence and regularity result Lemma \ref{lem2}, there exists a separating hypersurface $W$ in $V$, such that $W$ is the reduced boundary of a stable minimizer of the functional (\ref{21}). By the first and second variation formulas, we have
		\begin{align*}
			D\mathcal{A}(\psi) = \int_{W}(H-\mu)\psi d\mathcal{H}^{n-1} = 0,
		\end{align*}
		\begin{align*}
			&D^2\mathcal{A}(\psi,\psi) \\
			&= \int_{W}(|\nabla\psi|^2 +(H^2-|A|^2-Ric(N,N)-\mu H-N(\mu))\psi^2)d\mathcal{H}^{n-1}\\
			&= \int_{W}(|\nabla\psi|^2 +(\frac{1}{2}H^2-\frac{1}{2}|A|^2+\frac{1}{2}(Sc(W)-Sc(V))-\mu H-N(\mu))\psi^2)d\mathcal{H}^{n-1}\\
			&\ge 0.
		\end{align*}
		As the first variation vanishes, we obtain that $H=\mu$ on $\Sigma$. We also have the following estimate:
		\begin{align*}
			N(\mu) = \langle N, \nabla \mu \rangle \ge -\|\nabla\mu\| \ge -\frac{1}{2}\sec^2(\frac{d_0}{2}).
		\end{align*}
		
		Consequently,
		\begin{align*}
			0 \le &D^2\mathcal{A}(\psi,\psi)\\
			\le &\int_{W}(|\nabla\psi|^2-\frac{1}{2}(|A|^2-Sc(W)+\mu^2+2N(\mu)+Sc(V))\psi^2)d\mathcal{H}^{n-1}\\
			\le &\int_{W}(|\nabla\psi|^2+\frac{1}{2}Sc(W)\psi^2)d\mathcal{H}^{n-1}.
		\end{align*}
		
		Combined with the conformal transformation argument in \cite{ref21}, we conclude that $W$ admits a PSC metric. On the other hand, since $W$ and $\Sigma$ are homologous as chains (both being cobordant to $\partial_-V$ in $V$), we have
		\begin{align*}
			\deg(j|_W) = \deg(j|_\Sigma) = \deg(j\circ i) = 1 \ne 0.
		\end{align*}
		This contradicts the fact that $W$ admits a PSC metric.
	\end{proof}
	
	We define the following classes of manifolds:
	\begin{definition}
		
		$\quad$
		
		\begin{itemize}
			\item $\mathcal{N}$ denotes the collection of all smooth closed manifolds that do not admit any Riemannian metric of positive scalar curvature (PSC).
			
			\item $\mathcal{N}_{\mathrm{nspin}}$ consists of all manifolds in $\mathcal{N}$ that are \emph{totally non-spin}, meaning their universal covers are non-spin manifolds.
			
			\item $\mathcal{R}^{\neq 0}$ comprises all spin manifolds with non-vanishing Rosenberg index:
			\begin{align*}
				\alpha(M) \neq 0 \text{ in }KO_*(\mathrm{C}^*\pi_1(M)).
		\end{align*}\end{itemize} 
	\end{definition}
	We will subsequently investigate the incompressible depth estimates for manifolds belonging to these classes.
	
	\begin{proposition}\label{pro2}
		Let $M$ be a compact $n$-dimensional spin Riemannian manifold with boundary such that $\mathcal{S}_{\mathcal{N}}\ne\varnothing$ ($n=6,7$). If the scalar curvature of $M$ is greater than or equal to $1$, then the $\mathcal{N}$ incompressible depth of $M$ is no greater than $\pi$.
	\end{proposition}
	\begin{proof}
		Let $M_0$ be the non-compact manifold obtained by pasting $M$ and $\partial M\times\lbrack 0,\infty)$ along the boundary. Assume that there is an incompressible hypersurface $\Sigma$ in $M$ with depth greater than $\pi$, then $\Sigma$ is also incompressible in $M_0$. By \cite[Proposition 4.6]{CRZ}, there is a covering $\Tilde{M_0}$ of $M_0$ with $\pi_1(\Tilde{M_0})=\pi_1(\Sigma)$, such that $\Sigma$ separates the \textit{open band} (for the definition see \cite{CRZ}) $\Tilde{M_0}$. Furthermore, any hypersurface separating $\Tilde{M_0}$ admits no PSC metric. Let $V$ be as in (\ref{23}), then any hypersurface $\Sigma'$ separating $V$ also separates $\Tilde{M_0}$. In fact, $\Sigma'$ is homologous to $\Sigma$. For any curve $\alpha$ connecting two ends in $\Tilde{M_0}$ separated by $\Sigma$, the intersection number of $\alpha$ and $\Sigma$ is non-zero, so is its intersection number with $\Sigma'$. Consequently, we conclude that $\Sigma_1$ admits no PSC metric.
		
		Following the same argument as in Proposition \ref{thm6}, we derive the desired contradiction.
	\end{proof}
	
	\begin{proposition}\label{pro4}
		Let $M$ be a compact $n$-dimensional Riemannian manifold with boundary such that $\mathcal{S}_{\mathcal{N}_{nspin}}\ne\varnothing$ ($n=6,7$). If the scalar curvature of $M$ is greater than or equal to $1$, then the $\mathcal{N}_{nspin}$ incompressible depth of $M$ is no greater than $\pi$.
	\end{proposition}
	\begin{proof}
		The proof is the same as that of the preceding proposition, with a direct application of the result in \cite{CRZ}.
	\end{proof}
	
	\begin{proposition}\label{pro3}
		Let $M$ be a compact $n$-dimensional spin Riemannian manifold with boundary such that $\mathcal{S}_{\mathcal{R}^{\ne 0}}\ne\varnothing$, with scalar curvature greater than or equal to $1$, then the $\mathcal{R}^{\ne 0}$ incompressible depth of $M$ is no greater than $\pi$.
	\end{proposition}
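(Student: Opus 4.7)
The plan is to mirror the proof of Proposition \ref{thm6} but to replace its aspherical/degree-one topological obstruction by cobordism invariance of the Rosenberg index. Assume for contradiction that there is an incompressible hypersurface $\Sigma\in\mathcal{S}_{\mathcal{R}^{\ne 0}}$ with $d(\Sigma,\partial M)>l>\pi$. Writing $\pi:=\pi_1(\Sigma)$, incompressibility embeds $\pi$ as a subgroup of $\pi_1(M)$, and we pass to the covering $\tilde{M}\to M$ with $\pi_1(\tilde{M})=\pi$; then $\Sigma$ lifts isometrically to $\tilde{M}$ with distance still $>l$ from $\partial\tilde{M}$. The same two-sidedness/intersection-number argument as in Proposition \ref{thm6} shows $\Sigma$ separates $\tilde{M}$ into two components.

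Next, smooth the signed distance to $\Sigma$ to $d_0$, pick a regular value $\pi<T<l$, and form the compact Riemannian band $V=\{-T\le d_0\le T\}\subset\tilde{M}$ with $\partial_\pm V=d_0^{-1}(\pm T)$. Taking $\mu=\tan(d_0/2)$ and minimizing the $\mu$-bubble functional (\ref{21}) subject to (\ref{22}) yields, by Lemma \ref{lem2}, a stable smooth minimizer $\hat{\Omega}$ whose reduced boundary $W$ stays away from $\partial V$. The first and second variation computation already worked out in the proof of Proposition \ref{thm6}, combined with a conformal change as in \cite{ref21}, equips $W$ with a PSC metric; since $\tilde{M}$ is spin and $W$ is two-sided, $W$ is spin as well.

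The contradiction now comes from index theory. The region $\hat{\Omega}\Delta\Omega_0$ with $\Omega_0=\{d_0<0\}$ is a compact spin cobordism inside $\tilde{M}$ from $\Sigma$ to $W$ (the two $\partial_-V$ boundary pieces of $\hat{\Omega}$ and $\Omega_0$ cancel). Let $u:\tilde{M}\to B\pi$ be a classifying map; because $\pi_1(\Sigma)\to\pi_1(\tilde{M})=\pi$ is an isomorphism, $u|_\Sigma$ is itself a classifying map of $\Sigma$, and $u$ extends over the cobordism. Hence $[\Sigma,u|_\Sigma]=[W,u|_W]$ in $\Omega^{\mathrm{spin}}_n(B\pi)$, and the Rosenberg assembly $\alpha:\Omega^{\mathrm{spin}}_n(B\pi)\to KO_n(C^*\pi)$ gives
\begin{align*}
\alpha(W,u|_W)=\alpha(\Sigma,u|_\Sigma)=\alpha(\Sigma)\ne 0.
\end{align*}
On the other hand, a PSC metric on $W$ forces $\alpha(W)=0$ in $KO_n(C^*\pi_1(W))$, and by naturality of the assembly map along $\pi_1(W)\to\pi$ induced by $u|_W$, this forces $\alpha(W,u|_W)=0$, a contradiction.

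The main obstacle I expect is just the bookkeeping: verifying that the classifying maps on $\Sigma$ and $W$ are compatible with the cobordism $\hat{\Omega}\Delta\Omega_0$ so that cobordism invariance of the Rosenberg assembly applies cleanly, and that its naturality under $\pi_1(W)\to\pi$ really kills the image of $\alpha(W)$. The geometric/analytic half of the argument is identical to that of Proposition \ref{thm6}, and no new analytic input is required beyond replacing the aspherical retraction by the index-theoretic obstruction.
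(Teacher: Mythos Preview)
Your argument is a genuinely different route from the paper's. The paper does not rerun the $\mu$-bubble construction at all: it glues a cylindrical end $\partial M\times[0,\infty)$ onto $M$, extends the metric to be a product near infinity (so $Sc$ stays bounded below), and then invokes Zeidler's index-theoretic band width estimate \cite{Zei20} (Theorem~1.4), which forces $\inf Sc=-\infty$ once a spin incompressible hypersurface with nonvanishing Rosenberg index sits at distance $>\pi$ from the end. By contrast, you redo the $\mu$-bubble argument of Proposition~\ref{thm6} and replace the aspherical retraction by bordism invariance of the Rosenberg assembly, which is a nice self-contained alternative.

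There is, however, a real gap. Proposition~\ref{pro3} is stated for \emph{all} $n$, and it is used that way in case~(3) of Theorem~\ref{thm2}, which carries no dimension bound. Your approach depends on the regularity of the $\mu$-bubble minimizer (Lemma~\ref{lem2}), which is only asserted for ambient dimension $\le 7$; so as written your proof establishes the proposition only in that range. The paper's route via \cite{Zei20} is dimension-free, which is precisely what case~(3) needs. A minor second point: saying ``$\hat{\Omega}\Delta\Omega_0$ is a spin cobordism from $\Sigma$ to $W$'' is slightly sloppy, since $W$ may meet $\Sigma$ and the symmetric difference can have corners. The clean statement is that $\overline{\Omega_0}$ is a spin bordism (over $B\pi$) from $\partial_-V$ to $\Sigma$ and $\overline{\hat{\Omega}}$ is one from $\partial_-V$ to $W$, so $[\Sigma]=[W]$ in $\Omega^{\mathrm{spin}}_{n-1}(B\pi)$; the rest of your naturality argument then goes through.
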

	\begin{proof}
		Let $M_0$ be the manifold obtained by pasting $M$ and $\partial M\times\lbrack 0,\infty)$ along the boundary. Suppose there are a metric $g$ on $M$ with $Sc(M)\ge 1$ and an incompressible hypersurface $\Sigma$ with non-vanishing Rosenberg index, such that $d(\Sigma,\partial M)>\pi$. Extend $g$ to a metric $g^+$ on $M_0$ which equals the product metric at infinity. It follows that the scalar curvature of $g^+$ is bounded from below. By \cite[Theorem 1.4]{Zei20}, we see that $\inf_{x\in M_0}Sc(g^+) = -\infty$, which leads to a contradiction.
	\end{proof}

	\section{PSC obstruction on fiber bundles: Proof of Theorem \ref{thm2}}
	
	In this section, we present the proof of Theorem \ref{thm2} which is divided into two parts. The first part establishes the nonexistence of PSC metrics on $E$, while the second part addresses the rigidity in the setting of nonnegative scalar curvature. Throughout the proof, we consistently denote by $m$ the dimension of the base space and by $n$ the dimension of the total space of the fiber bundle.
	
	\subsection{The nonexistence part}
	
	We begin with the proof of the following PSC obstruction result for bundles over the torus. The proof is mainly based on the $\mu$-bubble argument.
	
	\begin{lemma}\label{lem5}
		Let $E^n$ be a fiber bundle over $T^m$ with the fiber $F$ ($n\le 7$), where $F$ is a closed manifold in the class $NPSC^+$, then $E^n$ is also in the class $NPSC^+$.
	\end{lemma}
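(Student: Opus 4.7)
The plan is to argue by contradiction: suppose $N^n$ is a closed manifold with a degree $1$ map $f:N\to E$ that also admits a PSC metric. I will perform $m$ rounds of Schoen--Yau descent along the torus directions to produce a closed $(n-m)$-dimensional PSC manifold $\Sigma$ together with a degree $1$ map $\Sigma\to F$, contradicting $F\in NPSC^+$. The cohomological setup is as follows. Let $\alpha_1,\dots,\alpha_m$ be the standard generators of $H^1(T^m;\mathbb{Z})$ and set $\beta_i:=f^*\pi^*\alpha_i\in H^1(N;\mathbb{Z})$. Since $\pi^*(\alpha_1\cup\cdots\cup\alpha_m)=PD_E([F])$ and $\deg f=1$, the projection formula $f_*(PD_N(f^*\gamma))=\deg(f)\cdot PD_E(\gamma)$ shows that the Poincar\'e dual in $N$ of $\beta_1\cup\cdots\cup\beta_m$ pushes forward under $f$ to the fiber class $[F]\in H_{n-m}(E;\mathbb{Z})$.

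The descent proceeds inductively. Using the classifying map $g_1:N\to S^1$ of $\beta_1$, find a smooth stable minimizing hypersurface $N_1\subset N$ representing $PD(\beta_1)$; the dimension bound $n\le 7$ ensures regularity. The classical second-variation plus conformal-change argument of \cite{ref21} then endows $N_1$ with a PSC metric. The restrictions $\beta_2|_{N_1},\dots,\beta_m|_{N_1}$ remain non-trivial, because the identity $i_*PD_{N_1}(i^*\gamma)=PD_N(\beta_1\cup\gamma)$ for the inclusion $i:N_1\hookrightarrow N$ guarantees that the cup $\beta_2|_{N_1}\cup\cdots\cup\beta_m|_{N_1}$ has non-vanishing Poincar\'e dual in $N_1$. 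Iterating $m-1$ more times produces a closed $(n-m)$-manifold $\Sigma$ carrying a PSC metric, realized as a codimension-$m$ submanifold of $N$ whose class maps under the composite inclusion to $PD_N(\beta_1\cup\cdots\cup\beta_m)$ in $H_{n-m}(N)$. Each step could equally well be implemented via the $\mu$-bubbles of Section~2, since every descent stays in the closed setting.

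It remains to produce the degree $1$ map $\Sigma\to F$. By construction $\beta_i|_\Sigma=0$ for every $i$, so $\pi\circ f|_\Sigma:\Sigma\to T^m$ kills every generator of $H^1(T^m;\mathbb{Z})$ and is therefore null-homotopic, using that $T^m$ is aspherical. Applying the homotopy lifting property to the fiber bundle $\pi:E\to T^m$ deforms $f|_\Sigma$ into a single fiber, producing a map $\phi:\Sigma\to F$. Since $f_*[\Sigma]=[F]$ in $H_{n-m}(E)$, one has $i_*\phi_*[\Sigma]=i_*[F]$ where $i:F\hookrightarrow E$, and because $i_*[F]=PD_E(\pi^*(\alpha_1\cup\cdots\cup\alpha_m))$ has infinite order in $H_{n-m}(E)$, this forces $\phi_*[\Sigma]=[F]$ in $H_{n-m}(F)$, i.e.\ $\deg\phi=1$. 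The hypothesis $F\in NPSC^+$ then rules out the PSC metric on $\Sigma$, giving the desired contradiction. The main technical obstacle is tracking the Poincar\'e duality relations through each descent step and, at the end, verifying that the bundle-theoretic null-homotopy lift yields a genuinely degree $1$ map into $F$ rather than merely a cycle-level relation in $H_*(E)$.
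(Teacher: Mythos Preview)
Your approach---doing all $m$ Schoen--Yau descents at once and then homotoping $f|_\Sigma$ into a single fiber---differs from the paper's inductive strategy, and as written it has two genuine gaps.

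First, the assertion ``by construction $\beta_i|_\Sigma=0$ for every $i$'' is not justified. An area-minimizing hypersurface $N_1$ in the class $PD(\beta_1)$ is merely \emph{homologous} to a fiber $g_1^{-1}(\mathrm{pt})$; nothing forces $\beta_1|_{N_1}=0$, and this defect propagates through the iteration. Without $\beta_i|_\Sigma=0$ you cannot conclude that $\pi\circ f|_\Sigma$ is null-homotopic, so the homotopy-lifting step producing $\phi$ is unavailable. Second, and more fundamentally, the claim that $i_*[F]$ has infinite order in $H_{n-m}(E)$ is false for bundles over tori. Already for an $S^1$-bundle over $T^2$ with nonzero Euler class $k$ (a Heisenberg nilmanifold) the fiber class in $H_1$ is $k$-torsion; for $k=1$ it is zero. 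Since $S^1\in NPSC^+$, this is a legitimate instance of the lemma. Thus even if $\phi:\Sigma\to F$ existed with $i_*\phi_*[\Sigma]=i_*[F]$, you would obtain at best $\deg\phi\equiv 1\pmod k$ (or no constraint at all when $k=1$), and the $NPSC^+$ hypothesis, which is stated for degree-$1$ maps, does not apply.

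The paper sidesteps both issues by induction on $m$. Unwinding one $S^1$ factor gives a covering $\tilde E\to E$ pulled back along $T^{m-1}\times\mathbb{R}\to T^m$, and a five-lemma argument shows that the sub-bundle $S$ over $T^{m-1}\times\{0\}$ is a deformation retract of $\tilde E$. The retraction $j:\tilde E\to S$ then lets one compute the degree of $j\circ\tilde f$ on a $\mu$-bubble hypersurface directly---it equals $\deg(\tilde f)=1$ because the bubble is homologous to $\tilde f^{-1}(S)$---and the inductive hypothesis $S\in NPSC^+$ closes the argument. The retraction is the key device that makes the degree computation clean and bypasses the fiber-class-in-homology obstruction that breaks your direct approach.
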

	\begin{proof}
		We conduct induction on $m$. If $m=0$, the conclusion follows from the definition of $NPSC^+$ class. Suppose the conclusion holds true for $m-1$. If the conclusion is not true for $m$, then there exists a Riemannian manifold $X$ such that $Sc(X)\ge 1$ due to the compactness of $X$. Let $f: X\longrightarrow E$ be a map of degree 1. Consider the natural covering map $q:T^{m-1}\times\mathbb{R}^1\longrightarrow T^m$. Let $\tilde{E}$ be the pullback bundle of $E$ under $q$, so $\phi:\Tilde{E}\longrightarrow E$ is also a covering map. This yields the following commutative diagram:
		
		\begin{align*}
			\begin{CD}
				\Tilde{E}     @>\phi>>  E\\
				@VV\Tilde{p}V        @VVpV\\
				T^{m-1}\times\mathbb{R}^1     @>q>>  T^m.
			\end{CD}
		\end{align*}
		Let $h:T^{m-1}\times \mathbb{R}^1\longrightarrow \mathbb{R}^1$ be the projection map. Define $S = \Tilde{p}^{-1}(h^{-1}(0))$, an $F$ bundle over $T^{m-1}$. By the inductive hypothesis, $S\in NPSC^+$. 
		
		Consider the following diagram:
		\begin{align*}
			\begin{CD}
				\pi_{k+1}(T^{m-1}\times\lbrace 0 \rbrace)     @>>>       \pi_k(F)     @>i_3>>  \pi_k(S) @>j_1>> \pi_k(T^{m-1}\times\lbrace 0 \rbrace)   @>>>   \pi_{k-1}(F)\\
				@VVV                   @VVidV        @VVi_1 V                                  @VVi_2 V             @VVV\\
				\pi_{k+1}(T^{m-1}\times \mathbb{R}^1)     @>>>      \pi_k(F)     @>i_4>>  \pi_k(\Tilde{E})           @>j_2>> \pi_k(T^{m-1}\times \mathbb{R}^1)   @>>>   \pi_{k-1}(F).
			\end{CD}
		\end{align*}
		By applying the five lemma, we obtain that $i_1$ is an isomorphism for all $k$ (though the typical five lemma in homological algebra is stated for modules, the diagram chase in the case of groups is essentially the same). By the Whithead Theorem, $S$ is a deformation retract of $\Tilde{E}$. This ensures the existence of the map $j:\Tilde{E}\longrightarrow S$, such that $j\circ i_1$ is homotopic to the identity.
		
		Following the argument of the proof of Proposition 5.7 in \cite{ref6}, we can pull back the covering map $\phi:\Tilde{E}\longrightarrow E$ by $f$ and obtain the following diagram:
		\begin{align*}
			\begin{CD}
				\Tilde{E}     @>\phi>>  E\\
				@AA\Tilde{f}A        @AAfA\\
				\Tilde{X}     @>\Phi>>  X.
			\end{CD}
		\end{align*}
		Here $\Phi$ is a covering map and $\Tilde{f}$ is a proper map satisfying $deg(\Tilde{f})=deg(f)=1$. We have $Sc(\Tilde{X})\ge 1$ on $\Tilde{X}$. Let
		\begin{align*}
			\Tilde{h} = h\circ\Tilde{p}\circ\Tilde{f}: \Tilde{X}\longrightarrow\mathbb{R}^1
		\end{align*}
		and assume $0$ is a regular value of $\Tilde{h}$ without loss of generality. Let $\Omega_0 = \lbrace \Tilde{h}<0\rbrace$, $\Tilde{h}^{-1}(0) = \Sigma_0$, and let $d$ be a smooth approximation of the signed distance function to $\Sigma_0$. Furthermore, we define
		\begin{align*}
			\mu = \tan(\frac{1}{2}d).
		\end{align*}
		By Lemma \ref{lem2}, there exists a stable $\mu$-bubble $\Omega$ that minimizes \eqref{21}, with $\partial\Omega = \Sigma$. By the same argument as in the proof of Proposition \ref{thm6}, $\Sigma$ carries a PSC metric. On the other hand, we consider the map
		\begin{align*}
			F = j\circ\Tilde{f}: \Tilde{X}\longrightarrow S.
		\end{align*}
		Since $\Sigma$ and $\Sigma_0$ are homologous, we have:
		\begin{align*}
			deg(F|_{\Sigma}) = deg(F|_{\Sigma_0}) = deg(\Tilde{f})=1.
		\end{align*}
		This contradicts the fact that $S\in NPSC^+$.
	\end{proof}
	
	\begin{remark}
		The argument in the proof of the above theorem also applies to the case of \textit{non-zero degrees}. However, we hope that our class $NPSC^+$ could include more manifolds like the \textit{SYS} manifolds, so we require the map to be of degree $1$ in Definition \ref{defn: NPSC+}.
	\end{remark}
	
	\begin{lemma}\label{lemM}
		Let $E^n$ be an $F$ bundle over the base $B^m = X\#M$, where $X$ is a Cartan-Hadamard manifold and $M$ is a general closed manifold. Fix a Riemannian metric on $E$. Then for any given $L_0>0$, there is a region $\mathcal{W}$ in certain Riemannian covering $\Tilde{E}$ of $E$ and an incompressible hypersurface $\partial\mathcal{W}_0$ in $\mathcal{W}$, such that $\partial\mathcal{W}_0$ is an $F$ bundle over $T^{m-1}$ satisfying
		\begin{align}\label{M}
			d(\partial\mathcal{W}_0, \partial\mathcal{W})>L_0.
		\end{align}
	\end{lemma}
	\begin{proof}		
		Let $k:B\longrightarrow X$ be a smooth map which collapses $M$ to a point. Fix metrics on $X$ and $B$ such that $Lip(k)\le 1$. Denote the universal Riemannian covering of $X$ by $\Tilde{X}$, which induces the following commutative diagram:
		\begin{align}\label{31}
			\begin{CD}
				\Tilde{B}     @>\pi>>  B\\
				@VV\Bar{k}V        @VVkV\\
				\Tilde{X}     @>>>  X.
			\end{CD}
		\end{align}
		Clearly, we have
		\begin{align}\label{32}
			\mathrm{Lip}(\Bar{k}) = \mathrm{Lip}(k) \le 1.
		\end{align}
		The covering map $\pi:\Tilde{B}\longrightarrow B$ also induces a pullback bundle $\tilde{E} = \pi^*E$:
		\begin{align}\label{33}
			\begin{CD}
				\Tilde{E}     @>\Tilde{\pi}>>  E\\
				@VVqV        @VVpV\\
				\Tilde{B}     @>\pi>>  B.
			\end{CD}
		\end{align}
		satisfying
		\begin{align}\label{34}
			\mathrm{Lip}(q) = \mathrm{Lip}(p) < \infty.
		\end{align}
		It follows that $\Tilde{E}$ is a covering space of $E$ with an incompressible fiber $F$. To be more precise, if there is an element in $\pi_1(F)$ homotopic to zero in $\Tilde{E}$, then under the covering map $\Tilde{\pi}$, this yields an element in $\pi_1(F)$ homotopic to zero in $E$, which contradicts the incompressibility of $F$ in $E$.
		
		We proceed with the depth analysis. First, we identify a region $\mathcal{V}$ in $\tilde{X}$ such that $\mathcal{V} \cong T^{m-1} \times [0,1]$, satisfying
		\begin{equation}\label{35}
			\begin{aligned}
				&\partial\mathcal{V} = T^{m-1} \times \{0\} \cup T^{m-1} \times \{1\}, \\
				&\partial_0\mathcal{V} = T^{m-1} \times \left\{\frac{1}{2}\right\}, \\
				&d(\partial\mathcal{V}, \partial_0\mathcal{V}) > L
			\end{aligned}
		\end{equation}
		for arbitrarily large $L$.
		
		Since $\tilde{X}$ is a simply connected Cartan-Hadamard manifold, there exists a map $f$ that diffeomorphically maps the geodesic ball $B_R(0)$ to the unit disk $D^n$ endowed with a flat metric, where $\mathrm{Lip}(f) \leq \frac{1}{R}$. (Although the metric on $\tilde{X}$ may not have non-positive sectional curvature, it is $C^0$-equivalent to such a metric.)
		
		Fix a two-sided embedding of $T^{m-1}$ in $D^m$ and select its tubular neighborhood $V \cong T^{m-1} \times [0,1]$, such that
		\[
		d(\partial V, \partial_0 V) > \delta
		\] 
		holds in $D^n$, where $\delta$ is a small constant depending only on the dimension $n$. Then, $\mathcal{V} = f^{-1}(V)$ satisfies the desired properties provided $R$ is chosen sufficiently large.
		
		We next perform the construction in $\tilde{B}$. The construction in (\ref{31}) shows that $\tilde{B}$ is the connected sum of $\tilde{X}$ and infinitely many copies of $M$:
		\begin{align*}
			\tilde{B} = \tilde{X}\mathbin{\#_{p_1}}M\mathbin{\#_{p_2}}M\mathbin{\#}\cdots
		\end{align*}
		where $\#_{p_i}$ denotes taking the connected sum in a small neighborhood of $p_i\in\tilde{X}$. The map $\overline{k}\colon \tilde{B}\to \tilde{X}$ in (\ref{31}) collapses these $M$ components to the points $p_1,p_2,p_3,\dots$.
		
		By making small perturbations of the hypersurfaces $\partial\mathcal{V}$ and $\partial_0\mathcal{V}$, we may assume none of the $p_i$ lie on $\partial\mathcal{V}$ or $\partial_0\mathcal{V}$. Define
		\begin{align*}
			\mathcal{U} &= \overline{k}^{-1}(\mathcal{V})
		\end{align*}
		and set
		\begin{align*}
			\partial\mathcal{U} &= \overline{k}^{-1}(\partial\mathcal{V}), \\
			\partial_0\mathcal{U} &= \overline{k}^{-1}(\partial_0\mathcal{V}).
		\end{align*}
		
		Since $\partial\mathcal{V}$ and $\partial_0\mathcal{V}$ do not intersect any $p_i$, the maps $\partial\mathcal{U}\to \partial\mathcal{V}$ and $\partial_0\mathcal{U}\to \partial_0\mathcal{V}$ induced by $\overline{k}$ are diffeomorphisms. This ensures that the components of $\partial\mathcal{U}$ and $\partial_0\mathcal{U}$ are tori. Topologically, $\mathcal{U}$ is the connected sum of $T^{m-1}\times[0,1]$ with finitely many copies of $M$.  
		
		We need to verify the incompressible condition and depth condition of $\mathcal{U}$. First, recall the basic fact that for manifolds $M_1,M_2$ of dimension at least 3, $\pi_1(M_1\#M_2) = \pi_1(M_1)*\pi_1(M_2)$ by Van Kampen's theorem. Therefore, the inclusion map $M_1\backslash D^n \hookrightarrow M_1\#M_2$ always induces an injection on fundamental groups. This implies that
		\begin{equation}\label{36}
			\begin{split}
				&\pi_1(T^{m-1}\times[0,1] - \{q_1,q_2,\dots,q_r\})\\
				\longrightarrow &\pi_1(T^{m-1}\#_{q_1}M\#_{q_2}M\#\cdots\#_{q_r}M)\cong\pi_1(\mathcal{U})
			\end{split} 
		\end{equation}
		is injective, where the notation $\#_{q_i}$ means performing the connected sum in a small neighborhood of the point $q_i$. Since the middle torus $T^{m-1}\times\{\frac{1}{2}\}$ is incompressible in $T^{m-1}\times[0,1]$, it follows from (\ref{36}) that the middle torus $\partial_0\mathcal{U}$ is incompressible in $\mathcal{U}$.
		
		Since $Lip(\bar{k})\le 1$ from (\ref{32}), we also have
		\begin{align*}
			d(\partial\mathcal{U},\partial_0\mathcal{U}) > L
		\end{align*} 
		by \eqref{35}.
		
		Now we can construct the desired region $\mathcal{W}$ in $\Tilde{E}$. Define $\mathcal{W}$ to be the restriction of $\Tilde{E}$ on $\mathcal{U}$. Since $Lip(q) = Lip(p) = C<+\infty$ by (\ref{34}), we have
		\begin{align*}
			d(\partial\mathcal{W},\partial_0\mathcal{W}) > \frac{L}{C}
		\end{align*}
		where $\partial_0\mathcal{W}$ is the restriction of $\mathcal{W}$ on $\partial_0\mathcal{U}$. 
		
		For the remaining of the proof, it suffices to show $\partial_0\mathcal{W}$ is an incompressible hypersurface in $\mathcal{W}$. Consider the following commutative diagram:
		\begin{align}\label{chase}
			\begin{CD}
				\pi_1(F)     @>i_1>>  \pi_1(\partial_0\mathcal{W}) @>j_1>> \pi_1(\partial_0\mathcal{U})\\
				@VVidV        @VV\varphi V                                  @VV\psi V\\
				\pi_1(F)     @>i_2>>  \pi_1(\mathcal{W})           @>j_2>> \pi_1(\mathcal{U}).
			\end{CD}
		\end{align}
		All the maps of vertical arrows are induced by inclusion maps. Recall $\psi:\partial_0\mathcal{U}\longrightarrow \mathcal{U}$ is injective. We now show that $\varphi$ is injective. Take $a \in \pi_1(\partial_0\mathcal{W})$. If $j_1(a)\ne 0$, then $\psi\circ j_1(a)\ne 0$ due to the injectivity of $\psi$. This indicates that $\varphi(a)\ne 0$ by the commutativity of the diagram. On the other hand, if $j_1(a) = 0$, then the exactness yields an element $b\in\pi_1(F)$ with $a = i_1(b)$. Since the inclusion $\pi_1(F)\longrightarrow\pi_1(\Tilde{E})$ is injective, it follows that the inclusion $\pi_1(F)\longrightarrow\pi_1(\mathcal{W})$ is also injective, which shows $i_2$ is injective. Therefore, $\varphi\circ i_1(b) = i_2(b) = 0$ indicates $b = 0$. Consequently, $\varphi$ is injective. Since $L$ can be arbitrarily large, we complete the proof of the lemma.
	\end{proof}

	\begin{proposition}\label{proM1}
		Let $E$ be as in Theorem \ref{thm2}, then $E$ admits no PSC metric if $X$ is Cartan-Hadamard.
	\end{proposition}
	\begin{proof}
		We argue by contradiction. By the compactness of $E$, we may assume there exists a metric on $B$ with $Sc(E) \geq 1$. Choose $\mathcal{W}$ and $\partial_0\mathcal{W}$ as in Lemma \ref{lemM}. We proceed case by case.
		
		\textbf{Case 1:} $F$ is $K(\pi,1)$ and $n \leq 7$. By Lemma \ref{lem5} and \cite{CLL}, $\partial_0\mathcal{W}$ belongs to class $\mathcal{C}_{deg}$ in Definition \ref{def3}. In the terminology of Section 2, this means $\mathcal{W}$ has $\mathcal{C}_{deg}$ incompressible depth at least $L/C$. Since $\mathcal{W}$ has scalar curvature $\geq 1$ and $L_0$ in (\ref{M}) can be taken arbitrarily large, we obtain a contradiction from the $\mathcal{C}_{deg}$ incompressible depth estimate in Proposition \ref{thm6}.
		
		\textbf{Case 2:} $F$ is spin with non-vanishing Rosenberg index, and $E$ is spin. From \cite[Theorem 1.5]{Zei}, as an $F$-bundle over $T^{m-1}$, $\partial_0\mathcal{W}$ also has non-vanishing Rosenberg index. The contradiction then follows from the $\mathcal{R}^{\neq 0}$ incompressible depth estimate in Proposition \ref{pro3}.
		
		\textbf{Case 3:} $F$ is in class $NPSC^+$, with either $E$ spin or $F$ totally non-spin, and $n \leq 7$. We assume $m \geq 2$ (the $m=1$ case follows from Lemma \ref{lem5}).
		
		\begin{itemize}
			\item[(a)] $E$ is spin. By Lemma \ref{lem5}, $\partial_0\mathcal{W}$ admits no PSC metric. For $n \geq 6$, the contradiction follows from the $\mathcal{N}$ incompressible depth estimate in Proposition \ref{pro2}. For $n \leq 5$, since closed manifolds of dimension $\leq 3$ with no PSC metric have non-vanishing Rosenberg index \cite{HS06}, the conclusion follows from Case 2.
			
			\item[(b)] $F$ is totally non-spin. This requires $n-m \geq 4$. With $m \geq 2$, we have $n \geq 6$. Considering the pullback bundle of the covering map $\mathbb{R}^{m-1} \to T^{m-1}$, we see $\partial_0\mathcal{W}$ is also totally non-spin (note the normal bundle of a single fiber is trivial). The contradiction then follows from Lemma \ref{lem5} and Proposition \ref{pro4}.
		\end{itemize}
	\end{proof}
	
	\begin{proposition}\label{proM2}
		Let $E$ be as in Theorem \ref{thm2}, then $E$ admits no PSC metric if $m=3$ and $X$ is $K(\pi,1)$.
	\end{proposition}
	\begin{proof}
		To begin, let's recall the hyperbolisation conjecture, a corollary of the geometrization conjecture (see \cite{MG}), which says a 3-dimensional closed aspherical manifold either is hyperbolic or contains a $\mathbb{Z}\oplus\mathbb{Z}$ subgroup in its fundamental group (see \cite[12.9.4]{GT}). Building on this we can assume $\pi_1(X)$ contains a subgroup isomorphic to $\mathbb{Z}\oplus\mathbb{Z}$, since the hyperbolic case is contained in the Cartan-Hadamard case. It follows that $X$ contains an incompressible torus, and the same thing holds for $B=X\#M$. By a similar diagram chase as (\ref{chase}), $E$ contains a nice incompressible hypersurface. It follows from \cite[Theorem 1.5]{CRZ}, \cite[Theorem 1.1]{ref2}, \cite[Theorem 1.7]{Zei} and the same argument in Proposition \ref{proM1} that $E$ admits no PSC metric.
	\end{proof}
	
	\subsection{The rigidity part}
	In this subsection, we deal with the rigidity part of Theorem \ref{thm2}. If there is a metric $g$ on $E$ with nonnegative scalar curvature, it follows from \cite{Kaz82} that $g$ is Ricci flat. To handle the Ricci flat case, we will analyze the volume growth of a certain covering of the bundle and make use of the structure of compact Ricci flat manifolds.
	
	\begin{definition}\label{def4}
		For a non-compact Riemannian manifold $(M,g)$, define the volume growth rate
		\begin{align*}
			r(M,g) = \sup\{\alpha\in\mathbb{R}_+, \liminf_{R\to\infty}\frac{\mathrm{Vol}(B(p,R))}{R^\alpha}>0\}
		\end{align*}
		where $B(p,R)$ is the geodesic ball centered at $p$ with radius $R$. Furthermore, we define
		\begin{align*}
			r(M) = sup\lbrace\alpha\in\mathcal{A}\rbrace.
		\end{align*}
	\end{definition}
	It is straightforward to see $r(M)$ is independent of the choice of the basepoint $p$.
	
	\begin{example}\label{lem6}
		Let $M$ be the Riemannian product of a compact manifold $N$ and the Euclidean space $\mathbb{R}^k$, then $M$ has volume growth rate $k$. For an $n$-dimensional simply connected Cartan-Hadamard manifold $X$, the volume growth rate of $X$ is at least $n$ by the volume comparison.
	\end{example}
	
	Recall that two metrics $g_1$ and $g_2$ are equivalent if there exists a constant $C$ such that:
	\begin{align*}
		\frac{1}{C}g_1\le g_2\le Cg_1.
	\end{align*}
	It is clear that equivalent metrics have the same volume growth rate. Therefore, we have:
	\begin{lemma}\label{lem7}
		Let $\Tilde{M}$ be a covering space of a closed manifold $M$. Then any two metrics on $\Tilde{M}$ induced from $M$ have the same volume growth rate.
	\end{lemma}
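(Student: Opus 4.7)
The plan is to reduce the statement to the fact, alluded to just before the lemma, that equivalent Riemannian metrics share the same volume rate, combined with the observation that equivalence on a compact base lifts to equivalence on any covering with the same constants.

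First I would note that if $g_1$ and $g_2$ are two smooth Riemannian metrics on the closed manifold $M$, then by compactness there exists a constant $C>0$ with
\begin{align*}
\tfrac{1}{C}\,g_1\le g_2\le C\,g_1
\end{align*}
pointwise on $M$. Writing $\pi\colon\tilde M\to M$ for the covering projection, the induced metrics on $\tilde M$ are by definition $\pi^*g_1$ and $\pi^*g_2$, and since $\pi$ is a local diffeomorphism the pointwise inequality lifts verbatim:
\begin{align*}
\tfrac{1}{C}\,\pi^*g_1\le \pi^*g_2\le C\,\pi^*g_1.
\end{align*}
Hence the two induced metrics on $\tilde M$ are equivalent in the sense defined just before the lemma statement.

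Next I would prove the auxiliary claim that equivalent metrics on an arbitrary $n$-dimensional Riemannian manifold have the same volume rate. From $\tfrac{1}{C}g_1\le g_2\le Cg_1$ one obtains the comparison of induced distances $\tfrac{1}{\sqrt{C}}\,d_{g_1}\le d_{g_2}\le \sqrt{C}\,d_{g_1}$, and therefore the inclusion of geodesic balls
\begin{align*}
B_{g_1}(p,R/\sqrt{C})\subseteq B_{g_2}(p,R)\subseteq B_{g_1}(p,\sqrt{C}\,R).
\end{align*}
Taking determinants of the comparison of $(0,2)$-tensors gives a comparison of volume forms, namely $C^{-n/2}\,dV_{g_1}\le dV_{g_2}\le C^{n/2}\,dV_{g_1}$, so for any measurable set $\Omega$ one has $\operatorname{Vol}_{g_2}(\Omega)\ge C^{-n/2}\operatorname{Vol}_{g_1}(\Omega)$. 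Putting these together, whenever $\alpha\in\mathcal{A}(g_1)$ with constant $c=c(\alpha)$ one gets
\begin{align*}
\operatorname{Vol}_{g_2}(B_{g_2}(p,R))\ge C^{-n/2}\operatorname{Vol}_{g_1}(B_{g_1}(p,R/\sqrt{C}))\ge C^{-n/2}\,c\,C^{-\alpha/2}\,R^{\alpha}
\end{align*}
for all sufficiently large $R$ and all $p$, so $\alpha\in\mathcal{A}(g_2)$. By symmetry $\mathcal{A}(g_1)=\mathcal{A}(g_2)$ and hence $r(M,g_1)=r(M,g_2)$.

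Combining these two steps proves $r(\tilde M,\pi^*g_1)=r(\tilde M,\pi^*g_2)$, which is the lemma. There is no genuine obstacle here; the only mild point to keep in mind is that the equivalence constant on the base, which exists by a compactness argument, must propagate \emph{uniformly} to the non-compact covering, and this is precisely what the local-isometry property of $\pi$ provides.
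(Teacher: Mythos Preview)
Your proposal is correct and follows exactly the approach the paper indicates: the paper simply asserts just before the lemma that equivalent metrics have the same volume rate and then states the lemma as an immediate consequence, without further proof. You have filled in the details the paper omits (compactness gives the equivalence constant, the covering map lifts it uniformly, and the ball-inclusion plus volume-form comparison yields $\mathcal{A}(g_1)=\mathcal{A}(g_2)$), so your argument is a fleshed-out version of the same idea.
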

	
	The following lemma establishes a fundamental relationship between the volume growth rates of a manifold and its covering space:
	\begin{lemma}\label{lem8}
		Let $p:\Tilde{M}\longrightarrow M$ be a Riemannian covering map between two non-compact Riemannian manifolds, then $r(\Tilde{M})\ge r(M)$.
	\end{lemma}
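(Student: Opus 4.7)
The plan is to prove the pointwise volume comparison
\begin{equation*}
\mathrm{Vol}\bigl(B(\tilde{q},R)\bigr) \;\ge\; \mathrm{Vol}\bigl(B(p(\tilde{q}),R)\bigr)
\end{equation*}
for every $\tilde{q}\in\tilde{M}$ and every $R>0$, from which the desired inequality on volume rates follows directly by unwinding Definition \ref{def4}. The only structural ingredient is that a Riemannian covering map is a local isometry, hence length preserving on curves and Jacobian $1$ pointwise.

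First, I would verify that the restriction $p\colon B(\tilde{q},R)\longrightarrow B(q,R)$ is surjective, where $q=p(\tilde{q})$. Given $y\in B(q,R)$, pick a smooth curve in $M$ from $q$ to $y$ of length strictly less than $R$; by the standard path-lifting property of covering maps, this curve lifts uniquely to a curve in $\tilde{M}$ starting at $\tilde{q}$, and since $p$ is a local isometry the lift has the same length. Hence its endpoint lies in $B(\tilde{q},R)$ and projects to $y$. Applying the multiplicity form of the change of variables formula to the local isometry $p$ then gives
\begin{equation*}
\mathrm{Vol}\bigl(B(\tilde{q},R)\bigr) \;=\; \int_{M}\#\bigl(p^{-1}(y)\cap B(\tilde{q},R)\bigr)\, dV_{M}(y) \;\ge\; \int_{B(q,R)} 1\, dV_{M}(y) \;=\; \mathrm{Vol}\bigl(B(q,R)\bigr),
\end{equation*}
where the middle inequality uses the surjectivity proved above to bound the multiplicity function below by $\chi_{B(q,R)}$.

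With the comparison in hand, given any $\alpha\in\mathcal{A}$ for $M$ with associated constant $C=C(\alpha)$, evaluating at any $\tilde{q}\in\tilde{M}$ and $q=p(\tilde{q})$ yields $\mathrm{Vol}(B(\tilde{q},R))\ge CR^{\alpha}$ for all sufficiently large $R$, so $\alpha$ lies in the corresponding $\mathcal{A}$ for $\tilde{M}$ with the same constant. Taking supremum over such $\alpha$ gives $r(\tilde{M})\ge r(M)$. I anticipate no substantive obstacle in carrying this out; the only mild technicality is the surjectivity via path lifting, which is standard once one remembers that the ambient geodesic may leave a trivializing neighbourhood and thus must be lifted as an arbitrary continuous path rather than as a sequence of local sections.
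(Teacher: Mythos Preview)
Your proposal is correct and follows essentially the same route as the paper: establish the pointwise inequality $\mathrm{Vol}(B(\tilde{q},R))\ge \mathrm{Vol}(B(p(\tilde{q}),R))$ via surjectivity of $p$ restricted to balls together with the area (change-of-variables with multiplicity) formula, then pass to the supremum defining the volume rate. Your write-up is in fact a bit more explicit than the paper's, since you justify the surjectivity step by path lifting rather than leaving it as ``not hard to see.''
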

	\begin{proof}
		Let $x$, $\Tilde{x}$ be points in $M$ and $\Tilde{M}$ with $p(\Tilde{x})=x$. Let $B(x,R)$ and $B(\Tilde{x},R)$ be geodesic balls of radius $R$ centered at $x$ and $\Tilde{x}$. Since $p:B(\Tilde{x},R)\longrightarrow B(x,R)$ is surjective and $p$ is a local isometry, the Area Formula yields that
		\begin{align*}
			\mathrm{Vol}(B(\Tilde{x},R)=\int_{B(x,R)}n(y)d\sigma
			\ge\int_{B(x,R)}1d\sigma = \mathrm{Vol}(B(x,R))
		\end{align*}
		where $n(y)$ denotes the number of the inverse image of $y$ under $p$ in $B(\Tilde{x},R)$, which completes the proof.
	\end{proof}
	
	Now we give the proof of the rigidity part of Theorem \ref{thm2}.
	\begin{proof}[\textbf{Proof of Theorem \ref{thm2} (Rigidity part)}]
		We show that any Ricci-flat metric on $E$ must be flat when $F$ is aspherical. Assume, for contradiction, that $E$ admits a non-flat Ricci-flat metric. 
		
		By the structure theorem for compact Ricci-flat manifolds \cite{FW}, there exists a finite covering $p: W \times T^l \to E$, where $W$ is a closed simply connected Ricci-flat manifold. The non-flatness of $E$ implies that $W$ is nontrivial, and thus $\dim W \geq 4$ (since no closed simply connected Ricci-flat manifolds exist in dimensions $\leq 3$). 
		
		Pulling back this covering to $\tilde{E}$, we obtain the following commutative diagram: 
		\begin{align*}
			\begin{CD}
				\Tilde{E}     @>\pi>>  E\\
				@AAqA        @AApA\\
				Z     @>\Tilde{\pi}>>  W\times T^l.
			\end{CD}
		\end{align*}
		where all the maps are covering maps. Since $W$ is simply connected, $Z$ must be of the form $W\times T^{l-s}\times \mathbb{R}^s$. If $X$ is Cartan-Hadamard, by Lemma \ref{lem8} and Lemma \ref{lem10}, we have
		\begin{align*}
			m\le r(\Tilde{E})\le r(Z) = s.
		\end{align*}
		Therefore,
		\begin{align*}
			l-s\le n-dimW-s\le n-m-1.
		\end{align*}
		If $X$ is of dimension 3, we have
		\begin{align*}
			l-s\le n-dimW \le n-4 = n-m-1.
		\end{align*}
		Since $p$ induces a finite covering, the same thing also holds for $q$. Therefore, $\pi_1(Z) =\mathbb{Z}^{l-s}$ is a finite index subgroup of $\pi_1(\Tilde{E})$. Denote the embedded image of $\pi_1(F)$ in $\pi_1(\tilde{E})$  by $H$. By Lemma \ref{A3}, $H\cap \pi_1(Z)$ is a finite index subgroup of $H$. Let $\mathbb{Z}^t = H\cap \pi_1(Z)$, then we have $t\le l-s \le n-m-1$. On the other hand, $H$ is the fundamental group of a closed aspherical manifold $F$. It follows that $F$ has a finite covering $\Tilde{F}$ with the fundamental group $\mathbb{Z}^t$, where $\Tilde{F}$ is also aspherical. Hence, $\Tilde{F}$ is homotopic equivalent to $T^t$, so $H_{n-m}(\tilde{F},\mathbb{Z}_2) = 0$. This leads to a contradiction as $\tilde{F}$ is of dimension $n-m$. 
	\end{proof}
	
	\section{PSC obstruction on $S^1$ bundles}
	We study the PSC obstruction on $S^1$ bundles in this section. By possibly passing to covering spaces, we may assume both $B$ and $E$ are orientable.
	\subsection{$S^1$ bundles over enlargeable manifolds with incompressible fibers}
	
	In this subsection, we present a proof of Theorem \ref{thm3.5}.
	\begin{lemma}\label{lem15}
		Let $M$ be a simply connected manifold and $E$ an $S^1$ principal bundle over $B$. If the fiber of $E$ is incompressible, then $E$ is trivial.
	\end{lemma}
	\begin{proof}
		Consider the long exact sequence
		\begin{align*}
			\pi_2(B)\stackrel{\partial}{\longrightarrow}\pi_1(S^1)\stackrel{i}{\longrightarrow}\pi_1(E)\longrightarrow\pi_1(M).
		\end{align*}
		It follows from the incompressible condition that $\partial=0$. For any $\sigma\in\pi_2(B)$, by Lemma \ref{A1}, we have $\partial(\sigma) = e(\mathrm{hur}(\sigma))=0$. Since $\pi_1(B) = 0$, by Hurewicz Theorem we have $e(a) = 0$ for any $a\in H_2(B)$. This concludes $e=0$ and $E$ is trivial.
	\end{proof}
	\begin{proof}[\textbf{Proof of Theorem \ref{thm3.5}}]
		Fix two metrics $g_0$, $g$ on $B$ and $E$ respectively. Let $\Tilde{g}_0$ be the pullback metric on $\Tilde{B}$, the universal covering of $B$, and let $q:(\Tilde{E},\Tilde{g})\longrightarrow (\tilde{B},\tilde{g}_0)$ be the pullback bundle under the covering map. By Lemma \ref{lem15} $\Tilde{E}$ is trivial, so we can write $\Tilde{E} = \Tilde{B}\times S^1$. We have
		\begin{align*}
			&f_1: \tilde{E}\stackrel{q}{\longrightarrow} \tilde{B}\longrightarrow S^n,\\
			&f_2:\tilde{E}\longrightarrow S^1,
		\end{align*}
		where the second map in the first row is given by the hyperspherical metric $\tilde{g}_0$, so we may assume $f_1$ is constant at the infinity with $\mathrm{Lip} f_1<\frac{1}{2}\epsilon$ and $(f_1)_{!}(pt)\ne 0\in H_1(\tilde{E})$. Let $z_k:S^1\longrightarrow S^1$ be the $k$-fold covering map. By selecting
		\begin{align*}
			k>2\epsilon^{-1}\mathrm{Lip} (f_2|_{\mathrm{supp} f_1})
		\end{align*}
		we obtain that the map $f_3 = z_k\circ f_2$ satisfies $\mathrm{Lip} f_3<\frac{1}{2}\epsilon$. It follows that
		\begin{align*}
			F :\tilde{E}\stackrel{f_1\times f_2}{\longrightarrow}S^{n-1}\times S^1\longrightarrow S^{n-1}\wedge S^1= S^n
		\end{align*}
		is a non-zero degree map with $\mathrm{Lip} F<\epsilon$ that equals the constant at the infinity, which completes the proof.
	\end{proof}
	
	\begin{remark}
		Since $\mathrm{Diff}^+(S^1)\simeq SO(2)$, we have $B\mathrm{Diff}^+(S^1)\simeq BSO(2)$. This implies that every fiberwise orientable $S^1$ bundle is induced by an orientable $\mathbb{R}^2$-vector bundle. In other word, every fiberwise orientable $S^1$ bundle admits an $S^1$-principle bundle structure.  For the remainder of this paper, we will not distinguish between these two notions.
	\end{remark}

	\subsection{$S^1$ bundles and \textit{SYS} bands}
	In this subsection, we focus on the proof of Theorem \ref{thm4}. The tools are based on Gromov's width estimate for \textit{SYS} bands in \cite{ref7}.
	
	\begin{definition}\label{def5}
		An $n$-dimensional Riemannian band $M$ is called \textit{SYS}, if there exist $\alpha_1,\alpha_2,\dots,\alpha_{n-3}\in H^1(M)$ and $\beta\in H^1(M,\partial M)$, such that
		\begin{align}\label{401}
			\lbrack M,\partial M\rbrack \smallfrown\alpha_1\smallfrown\dots\smallfrown \alpha_{n-3}\smallfrown\beta
		\end{align}
		is non-spherical, where $\lbrack M,\partial M\rbrack$ represents the fundamental class of a compact manifold with boundary.
	\end{definition}
	
	With the application of the torical symmetrization technique, Gromov proved a $4\pi$ inequality for \textit{SYS} bands in \cite{ref7}.  
	
	\begin{theorem}(\cite{ref7})\label{thm7}
		If a \textit{SYS} band $M$ has $Sc(M)\ge 1$, then $width(M)\le 4\pi$.
	\end{theorem}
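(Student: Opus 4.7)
The plan is to argue by contradiction. Suppose $(M^n, g)$ is a \textit{SYS} Riemannian band with $Sc(M) \ge 1$ and $\mathrm{width}(M, g) > 4\pi$. The \textit{SYS} hypothesis provides classes $\alpha_1, \dots, \alpha_{n-3} \in H^1(M)$ and $\beta \in H^1(M, \partial M)$ such that the cap product (\ref{401}) is a non-spherical class in $H_2(M)$. My overarching strategy is to extract, by iterated hypersurface minimization, a closed orientable $2$-manifold $\Sigma_2 \subset M$ which represents the Poincaré dual of (\ref{401}) and simultaneously carries a metric of positive scalar curvature; Gauss-Bonnet combined with the Hurewicz map will then deliver the contradiction.

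For the first descent step I would invoke Lemma \ref{lem1} with $\ell = 2\pi$ to obtain a $1$-Lipschitz band map $\phi: M \to [-2\pi, 2\pi]$ realizing $\beta$, and pick smooth circle-valued representatives $f_i: M \to S^1$ of the $\alpha_i$'s. Run the $\mu$-bubble of Lemma \ref{lem2} against the warping profile $h(t) = -\tan(t/2)$, whose poles $\pm\infty$ are attained at $\pm\pi$; the hypothesis $\mathrm{width}(M, g) > 4\pi = 2 \cdot 2\pi$ is precisely what places the admissibility region of $h$ strictly inside the $\phi$-band, so a stable smooth minimizer $\Sigma_{n-1}$ exists. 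The second-variation-plus-conformal-change argument recalled in the proof of Proposition \ref{thm6} then endows $\Sigma_{n-1}$ with a metric of positive scalar curvature. By construction $\Sigma_{n-1}$ is Poincaré dual to $\beta$.

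For the classes $\alpha_1, \dots, \alpha_{n-3}$ I would iterate the classical Fischer-Colbrie-Schoen warping trick: at each stage, minimize $S^1$-weighted area in the homology class Poincaré dual to the restriction of the next $\alpha_i$ to the previous hypersurface, then warp the metric on the minimizer by the first eigenfunction of the Jacobi operator to preserve positive scalar curvature. After $n-3$ such codimension-one descents the dimension drops from $n-1$ to $2$, leaving a closed orientable surface $\Sigma_2$ with PSC. Gauss-Bonnet forces each component of $\Sigma_2$ to be a $2$-sphere; the inclusion $\Sigma_2 \hookrightarrow M$ pushes $[\Sigma_2]$ onto (a nonzero multiple of) the cap product (\ref{401}), so via Hurewicz the latter is spherical, contradicting the \textit{SYS} hypothesis.

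The hardest part will be the analytic bookkeeping across the entire descent: one must verify at every stage that the effective scalar curvature stays strictly positive so the next minimization returns a nonempty smooth PSC hypersurface, and that the restricted class $\alpha_i|_{\Sigma_k}$ remains nontrivial so the homological accounting matches (\ref{401}). Nontriviality is a transversality check made possible by approximating the $f_i$ by generic smooth representatives, while the positive-scalar-curvature propagation is the analytic core and is exactly where the constant $4\pi$ enters: it is the width available to the first $\mu$-bubble with profile $h(t) = -\tan(t/2)$ on $(-\pi, \pi)$, doubled by symmetry. The subsequent Fischer-Colbrie-Schoen reductions act on closed homology classes and contribute no further width loss, which is why $4\pi$ is dimension-independent.
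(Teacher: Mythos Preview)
The paper does not prove Theorem~\ref{thm7} itself; it quotes the result from Gromov \cite{ref7} and indicates only that the proof proceeds by the ``torical-symmetrization technique.'' Your outline gives a correct argument, but it follows a route genuinely different from the one the paper attributes to Gromov.

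In the torical symmetrization scheme one processes the absolute classes $\alpha_1,\dots,\alpha_{n-3}$ \emph{first}: at each stage one minimizes area in the Poincar\'e dual class and, instead of conformally changing, takes a warped product with $S^1$ over the first Jacobi eigenfunction. This preserves both the lower bound $Sc\ge 1$ and the band width through all $n-3$ iterations, reducing the problem to a $3$-dimensional band from which the bound $4\pi$ is then extracted. Your ordering is the reverse: you first run a $\mu$-bubble against the band class $\beta$ to produce a \emph{closed} $(n-1)$-hypersurface carrying PSC after conformal change, and only afterwards perform the classical Schoen--Yau descent along the $\alpha_i$. The gain is that every subsequent minimization takes place on a closed manifold, so no free-boundary analysis is needed; the cost is that after the first step you retain only ``PSC'' rather than ``$Sc\ge 1$,'' which is exactly why the entire width obstruction is concentrated in that single $\mu$-bubble step.

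One correction: your explanation of the constant is off. With the profile $h(t)=-\tan(t/2)$ on $(-\pi,\pi)$ and a $1$-Lipschitz $\phi$, Lemma~\ref{lem2} already applies once $\ell>\pi$, i.e.\ $\mathrm{width}(M)>2\pi$; there is no ``doubling by symmetry.'' Thus your argument in fact yields $\mathrm{width}(M)\le 2\pi$, sharper than the stated $4\pi$ and consistent with the remark after Theorem~\ref{thm7} that the optimal constant is $2\pi\sqrt{(n-1)/n}$. This only strengthens your conclusion. Note also that the regularity in Lemma~\ref{lem2} carries the restriction $n\le 7$, implicit in all of the paper's applications though not written into the statement of Theorem~\ref{thm7}.
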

	
	By combining the torical symmetrization argument with the warped $\mu$-bubble method in dimension 3, one can actually obtain the optimal upper bound $2\pi\sqrt{\frac{n-1}{n}}$ in the preceding theorem.
	
	To prove Theorem \ref{thm4}, we need some more subtle topological description for $S^1$ bundles over the connected sum. We begin with the following structure lemma, which implies that an $S^1$ bundle over a connected sum has a specific generalized connected sum description in the sense of \cite{ref2}.
	\begin{lemma}(The Structure Lemma)\label{lem11}
		Let $M$ and $N$ be $n$-dimensional closed manifolds and $E$ an $S^1$ bundle over $B = M\#N$ ($n\ge 3$). Let $S=S^{n-1}$ be the connecting sphere of $M$ and $N$ (the boundary of $M\backslash D^n$ used to form the connected sum). Then the restriction of $E$ on $S$ is trivial, and $E$ is a fiber connected sum of $E_M$ and $E_N$, where $E_M$ and $E_N$ are $S^1$ bundles over $M$ and $N$. Here the fiber connected sum of $E_M$ and $E_N$ is defined by
		\begin{align*}
			E_M|_{M\backslash D^n}\cup_{\varphi}E_N|_{N\backslash D^n}
		\end{align*}
		with $\varphi: E_M|_{\partial (M\backslash D^n)}\longrightarrow E_N|_{\partial (N\backslash D^n)}$ an $S^1$ bundle isomorphism. The notation $E_M|_A$ represents the restriction of $E_M$ on a subset $A\subset M$.
	\end{lemma}
	\begin{proof}
		We first show that the restriction of $E$ on $S$ is trivial. Recall the isomorphism class of oriented $S^1$ bundles over a certain topological space $Y$ is in 1-1 correspondence to $H^2(Y)$. Since $i^*:H^2(M)\longrightarrow H^2(M\backslash D^n)$ is an isomorphism when $n\ge 3$, the restriction of $E$ on $M\backslash D^n$ is the restriction of a certain $S^1$ bundle $E'$ on $M$. As $D^n$ is contractible, the restriction of $E'$ on $S$ is trivial, and this gives the desired result.
		
		Next, we can construct $E_M$ by filling the boundary of $E|_{M\backslash D^n}$ by the trivial bundle over $D^n$ and construct $E_N$ in a similar fashion, which gives the desired fiber connected sum decomposition.
	\end{proof}
	
	We now begin the proof of Theorem \ref{thm4}. Let us briefly outline the strategy. In the paragraph preceding Lemma \ref{lem3-2}, we construct a band in a certain covering space of $E$ and provide an explicit description of this band. The subsequent paragraph will demonstrate that this band is \textit{SYS}, with Lemma \ref{lem3-2} playing a central role in the argument. Lemmas \ref{lem3-3}, \ref{lem3-4}, and \ref{lem3-5} serve as intermediate steps in the proof of Lemma \ref{lem3-2}. With these preparations, we can then complete the proof of Theorem \ref{thm4}. In our presentation, we will mainly focus on the case that $X$ is a 3-dimensional aspherical manifold. The method for the Cartan-Hadamard case is similar.

	We start with a 3-dimensional aspherical manifold $X$, and consider its oriented covering space $\Tilde{X}$ with the fundamental group $\mathbb{Z}$. Select a curve $\gamma\in\Tilde{X}$ representing the generator of $\pi_1(\Tilde{X})$. By possibly passing to a two-fold covering we may assume $\tilde{X}$ is orientable. By the Whitehead Theorem, we see that $\Tilde{X}$ possesses $\gamma$ as a deformation retract. Let $V$ be a small tubular neighborhood of $\gamma$, then $\Sigma_0=\partial V = T^2$ due to the orientability of $\Tilde{X}$. Now we may apply Lemma \ref{A5} to obtain
	\begin{align}\label{j}
		j:\Tilde{X}\backslash V\longrightarrow \Sigma_0
	\end{align}
	such that $j|_{\Sigma_0}$ is homotopic to the identity.
	
	Define
	\begin{align}\label{404}
		\mathcal{V} = U_{R}(\Sigma_0) = \lbrace x: d(x,\Sigma_0)\le R, x\in \Tilde{X}\backslash V\rbrace
	\end{align}
	which is an overtorical band of width $R$ due to the map $j$ in (\ref{j}). The covering $\Tilde{X}\longrightarrow X$ gives rise to a covering $\Tilde{B}\longrightarrow B$ as follows:
	\begin{align}\label{31'}
		\begin{CD}
			\Tilde{B}     @>\pi>>  B\\
			@VV\Bar{k}V        @VVkV\\
			\Tilde{X}     @>>>  X.
		\end{CD}
	\end{align}
	Then we have
	\begin{align}\label{40}
		\Tilde{B} = \Tilde{X}\#_{p_1}M\#_{p_2}M\#\dots
	\end{align}
	Denote the pullback bundle under the covering map $\Tilde{B}\longrightarrow B$ by $\Tilde{E}$, and let $S$ be the sphere connecting $X$ and $M$ in $X\#M$. The structural Lemma \ref{lem11} guarantees that the restriction of $E|_S$ is trivial. Let
	\begin{align*}
		\pi^{-1}(S) = S_1\cup S_2\cup S_3\cup\dots.
	\end{align*}
	It follows that $\tilde{E}|_{S_i}$ is trivial for $i = 1,2,3,\dots$. We substitute the bundles over infinite copies of $M$ in $\Tilde{E}$ by infinitely many $D^3\times S^1$. This gives a trivial $S^1$ bundle over $\Tilde{X}$, since $H^2(\Tilde{X})=H^2(S^1) = 0$. Topologically, we see that $\Tilde{E}$ is the fiber connected sum of $\Tilde{X}\times S^1$ with infinite copies of $S^1$ bundles over $M$:
	\begin{align}\label{42}
		\Tilde{E} = \Tilde{X}\times S^1\#_{S^1}Z\#_{S^1}Z\#\dots.
	\end{align}
	Here $Z$ denotes the corresponding $S^1$ bundle over $M$.
	
	\begin{figure}
		\centering
		\includegraphics[width = 12cm]{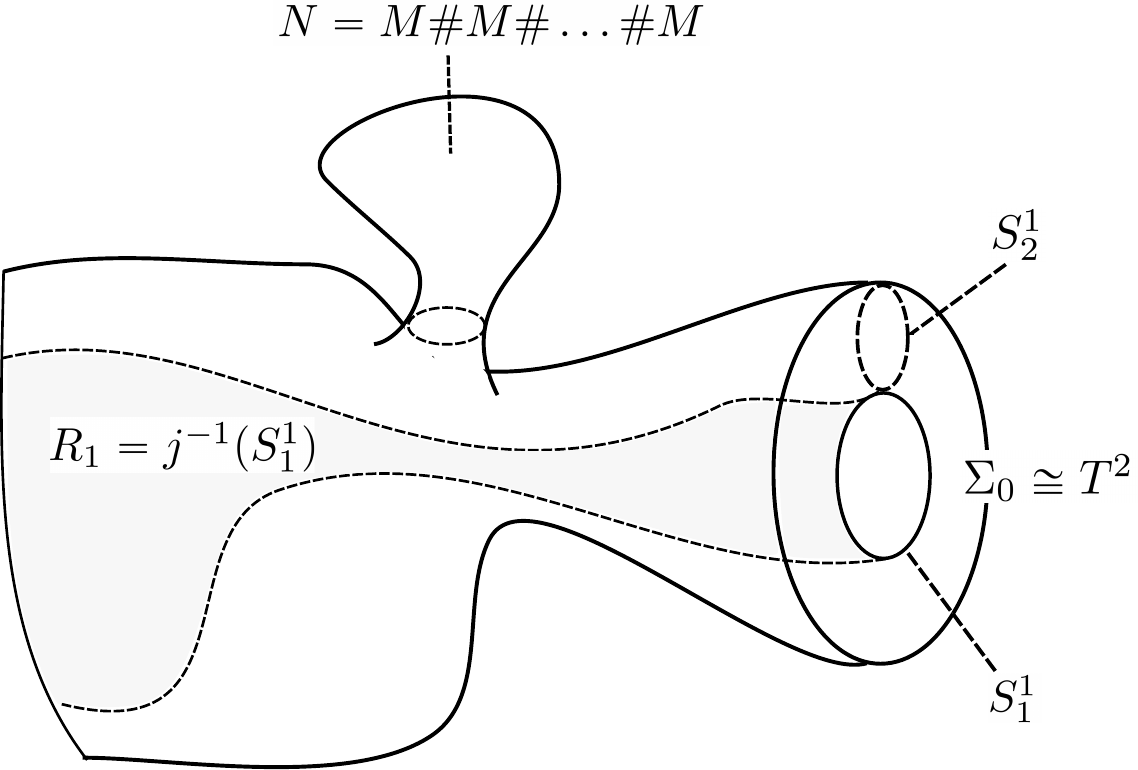}
		\caption{A schematic diagram of $\mathcal{U}$}
		\label{figure1}
	\end{figure}
	
	Next, we construct $\mathcal{U} = \Bar{k}^{-1}(\mathcal{V})$. By perturbing $\partial\mathcal{V}$ slightly, we may assume that none of the $p_i$ in (\ref{40}) lies in $\partial\mathcal{V}$. Consequently, we obtain
	\begin{align}\label{402}
		&\mathcal{U} = \mathcal{V}\#_{p_1}M\#_{p_2}M\#\dots\#_{p_k}M = \mathcal{V}\#N.
	\end{align}
	Let $\mathcal{W} = \Tilde{E}|_{\mathcal{U}}$, then
	\begin{align}\label{403}
		&\mathcal{W} = P\#_{p_1\times S^1}Z\#_{p_2\times S^1}Z\#\dots\#_{p_k\times S^1}Z = P\#_{S^1}Y
	\end{align}
	where $P$ is the $S^1$ bundle over $\mathcal{V}$, $Z$ and $Y$ are certain $S^1$ bundles over $M$ and $N$. Since $P$ is induced from the trivial $S^1$ bundle over $\Tilde{X}$, we have $P = \mathcal{V}\times S^1$. The notation $\#_{p_i\times S^1}$ here means taking the fiber connected sum at $p_i\times S^1\subset \mathcal{V}\times S^1 = P$.

	Let $\Sigma_0 \cong T^2 = S^1_1 \times S^1_2$, and denote by $R_1 = j^{-1}(S^1_1)$ and $R_2 = j^{-1}(S^1_2)$ the corresponding surfaces in $\mathcal{V}$. These surfaces can also be viewed in $\mathcal{U}$ and, under suitable arrangement (away from $N$), in $\mathcal{W}$ as well, since the restriction of $\mathcal{W}$ to $\mathcal{V}$ is trivial. A schematic diagram of $\mathcal{U}$ is presented in Figure \ref{figure1}, where the map $j$ retracts the manifold onto the right-hand side torus $\Sigma_0\cong T^2$.
	
	Let $T = S^1_2 \times S^1 \subset \Sigma_0 \times S^1 \subset \partial\mathcal{W}$, and let $R_2^+$ denote the restriction of the bundle $\mathcal{W}$ to $R_2$. Then, the following holds in the sense homology:
	\begin{align}\label{411}
		T = R_2^+ \cap (\Sigma_0 \times S^1),
	\end{align}
	and the intersection number of $T$ with $R_1$ is equal to 1. Our next goal is to prove the following:
	
	\begin{lemma}\label{lem3-2}
		Suppose the fiber of $\mathcal{W}$ has order $k$ in $\pi_1(\mathcal{W})$. Let $\sigma$ be an immersed sphere in $\mathcal{W}$, then the intersection number of $\sigma$ and $R_1$ is a multiple of $k$.
	\end{lemma}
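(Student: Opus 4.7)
Plan: The strategy is to use the $S^1$-bundle structure to turn $\sigma\cdot R_1$ into a sum of fiber winding numbers of circles that bound in $S^2$, and then use the algebraic structure of $\pi_1(\mathcal{W})$, exploiting the connected-sum decomposition $\mathcal{U}=\mathcal{V}\#N$, to force each winding to be a multiple of $k$.

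First I would perturb $\sigma:S^2\to\mathcal{W}$ so that $\pi\circ\sigma:S^2\to\mathcal{U}$ is transverse to $R_1\subset\mathcal{V}\subset\mathcal{U}$, where $\pi:\mathcal{W}\to\mathcal{U}$ is the bundle projection. The preimage $C=(\pi\circ\sigma)^{-1}(R_1)\subset S^2$ is a disjoint union of embedded circles $C_1,\ldots,C_\ell$, each bounding a disk $D_j\subset S^2$. Since the bundle is trivial over $\mathcal{V}$, we have $\pi^{-1}(R_1)\cong R_1\times S^1$ with natural fiber projection $\operatorname{pr}_{S^1}$, and $R_1\subset\mathcal{W}$ sits as $R_1\times\{*\}$ for a regular value $*\in S^1$. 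A slice-by-fiber intersection count yields
\[
\sigma\cdot R_1=\sum_{j=1}^{\ell}d_j,\qquad d_j:=\deg\bigl(\operatorname{pr}_{S^1}\circ\sigma|_{C_j}:S^1\to S^1\bigr).
\]
Since $\sigma|_{C_j}$ extends to the disk $\sigma|_{D_j}$ in $\mathcal{W}$, it is null-homotopic there. Writing its class in $\pi_1(R_1\times S^1)=\pi_1(R_1)\times\mathbb{Z}$ as $(g_j,d_j)$, the inclusion $R_1\times S^1\hookrightarrow\mathcal{W}$ then gives
\[
i_*(g_j)\cdot f^{d_j}=1\quad\text{in } \pi_1(\mathcal{W}),
\]
where $i:R_1\hookrightarrow\mathcal{W}$ and $f$ is the fiber class.

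It remains to show $k\mid d_j$. For this I would construct the homomorphism
\[
\phi:\pi_1(\mathcal{W})\xrightarrow{\pi_*}\pi_1(\mathcal{U})=\pi_1(\mathcal{V})*\pi_1(N)\xrightarrow{r_*}\pi_1(\mathcal{V}),
\]
with $r:\mathcal{U}\to\mathcal{V}$ the collapse retraction coming from $\mathcal{U}=\mathcal{V}\#N$. Since $R_1$ sits in the zero section $\mathcal{V}=\mathcal{V}\times\{*\}\subset P\subset\mathcal{W}$, the composition $r\circ\pi\circ(\mathcal{V}\hookrightarrow\mathcal{W})$ equals the identity on $\mathcal{V}$; consequently $\iota_*:\pi_1(\mathcal{V})\hookrightarrow\pi_1(\mathcal{W})$ is injective, and since $\phi(f)=1$, the image $\iota_*\pi_1(\mathcal{V})$ meets $\langle f\rangle$ only at the identity. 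Because $R_1\subset\mathcal{V}$, the element $i_*(g_j)$ belongs to $\iota_*\pi_1(\mathcal{V})$, so the equation $i_*(g_j)=f^{-d_j}\in\iota_*\pi_1(\mathcal{V})\cap\langle f\rangle=\{1\}$ forces both $i_*(g_j)=1$ and $f^{d_j}=1$. Thus $k\mid d_j$ for every $j$, and summing gives $\sigma\cdot R_1\in k\mathbb{Z}$.

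The most delicate point is the slice-by-fiber intersection formula in the first step: I need to verify that the signed count is precisely $\sum d_j$ in a way compatible with orientations, and that the perturbation behaves well for $\sigma$ being only immersed (not embedded) in the $4$-manifold $\mathcal{W}$. Once this is in place, the algebraic conclusion is automatic from the decomposition $\mathcal{U}=\mathcal{V}\#N$ and the triviality of the $S^1$-bundle on $\mathcal{V}$; notably, no further $\pi_1$-injectivity hypothesis on $R_1\subset\mathcal{V}$ is needed, since the retraction $\phi$ already detects the obstruction at the level of the base.
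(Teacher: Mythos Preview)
Your argument is correct and takes a genuinely different route from the paper's. The paper slices the sphere $\sigma$ along the generalized connected-sum hypersurface $L=S^2\times S^1=\partial A_1$, obtains regions mapped into $A_1=(\mathcal{V}-D^3)\times S^1$, and then spends three auxiliary lemmas (a combinatorial decomposition of each region into cylinders and a sphere, a torus intersection computation, and a reassembly via the map $J:A_1\to\Sigma_0\times S^1$) to show that each region contributes a multiple of $k$ to $\sigma\cdot R_1$. You instead slice along $\pi^{-1}(R_1)=R_1\times S^1$ directly and replace all of that homological work by a single algebraic observation: the retraction $\phi=r_*\circ\pi_*:\pi_1(\mathcal{W})\to\pi_1(\mathcal{V})$ splits off $\iota_*\pi_1(\mathcal{V})$ and kills the fiber, so the relation $i_*(g_j)\cdot f^{d_j}=1$ forces $f^{d_j}=1$. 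This is shorter and more conceptual; it also immediately yields the incompressible case (your $k=\infty$ gives $d_j=0$, which is the paper's Lemma~\ref{lem3-6}) and carries over verbatim to the higher-dimensional Cartan--Hadamard situation later in the section.

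Two small points you should make explicit when writing this up. First, the slice-by-fiber formula $\sigma\cdot R_1=\sum_j d_j$ needs the orientation of each $C_j$ to be the one induced by transversality of $\pi\circ\sigma$ with $R_1$ (so that the normal to $C_j$ in $S^2$ points toward increasing values of a local defining function for $R_1$); with that convention the local sign at each intersection point is exactly the local degree of $\operatorname{pr}_{S^1}\circ\sigma|_{C_j}$, and the formula follows. Second, writing the image of $(g_j,d_j)$ in $\pi_1(\mathcal{W})$ as the product $i_*(g_j)\cdot f^{d_j}$ implicitly uses that the two factors commute; this is automatic because $\pi_1(R_1\times S^1)=\pi_1(R_1)\times\mathbb{Z}$ is already a direct product, but it is worth noting that the fiber class $f$ is in any case central in $\pi_1$ of a principal $S^1$-bundle, so basepoint and ordering issues are harmless throughout.
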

	
	We write $\mathcal{W} = (\mathcal{V}-D^3)\times S^1\cup (Y-D^3\times S^1) = A_1\cup A_2$, and $L = S^2\times S^1$, the common boundary of $A_1$ and $A_2$. Suppose $f:S^2\longrightarrow\mathcal{W}$ represents a spherical class $\lbrack\sigma\rbrack$, and $f$ intersects transversally with $L$. Then 
	\begin{align*}
		f^{-1}(L) = \gamma_1\cup\gamma_2\cup\dots\cup\gamma_s
	\end{align*}
	which is the union of closed curves. By abuse of notation, we continue to denote the image of the curves $\gamma_i$ under $f$ by $\gamma_i$. Since $S^2$ is simply connected, each $\gamma_i$ must be contractible in $\mathcal{W}$ and thus represents a multiple of $k$ in $\pi_1(L) \cong \pi_1(S^1 \times S^2) \cong \mathbb{Z}$.
	
	These curves $\gamma_i$ partition the sphere $\sigma = S^2$ into several regions. Under the map $f$, some of these regions are mapped to $A_1$, while others are mapped to $A_2$. To compute the intersection with $R_1$ (which is a subset of $A_1$), it suffices to consider only the regions mapped to $A_1$.
	
	\begin{lemma}\label{lem3-3}(The Decomposition Lemma)
		Let $U\subset S^2$ be a region mapped to $A_1$ by $f$ as above, then the homology class in $f_*([U])\in H_2(A_1,\partial A_1)$ can be represented by the sum of some cylinders and at most one sphere.
	\end{lemma}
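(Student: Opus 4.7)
The plan is to exploit the genus-zero topology of $U$ together with the product structure of $A_1 = (\mathcal{V}-D^3)\times S^1$. As a connected component of $S^2\setminus\bigsqcup_i\gamma_i$, $U$ is a compact oriented planar surface with some number $m\geq 0$ of boundary components $c_1,\ldots,c_m$, each mapping into $L = S^2\times S^1\subset\partial A_1$. Each $c_i$ has winding number $n_i$ around the $S^1$-fiber of $L$, and these $n_i$ are multiples of $k$ (as recorded in the discussion preceding the lemma); moreover $\sum_i n_i = 0$ because the $c_i$ together cobound $U$. This identity forces the connecting map $\partial_* \colon H_2(A_1,\partial A_1)\to H_1(\partial A_1)$ to send $\lbrack f(U)\rbrack$ to $(\sum_i n_i)\lbrack F\rbrack = 0$ in $H_1(L) \subset H_1(\partial A_1)$. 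Hence $\lbrack f(U)\rbrack$ lies in the image of $H_2(A_1)\to H_2(A_1,\partial A_1)$.

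I proceed by induction on $m$. The base cases are handled directly: for $m=0$, $U=S^2$ and $f(U)$ already represents a single sphere. For $m=1$, $n_1=0$ and $f(c_1)$ bounds a $2$-chain $D'\subset L$; then $f(U)\cup D'$ realizes $\lbrack f(U)\rbrack$ by a sphere class in $A_1$. For $m=2$, $U$ is an annulus and $f(U)$ is directly a cylinder. For $m\geq 3$, choose two boundary components $c_i,c_j$ and a properly embedded arc $\alpha\subset U$ connecting them; a regular neighborhood of $c_i\cup\alpha\cup c_j$ in $U$ has Euler characteristic $-1$ and is therefore a pair of pants $P$ whose third boundary component is a new simple closed curve $c'\subset\operatorname{int}(U)$. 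The complement $U''=\overline{U\setminus P}$ is a planar surface with $m-1$ boundary components, and the splitting $\lbrack f(U)\rbrack=\lbrack f(P)\rbrack+\lbrack f(U'')\rbrack$ together with the inductive hypothesis applied to $f|_{U''}$ reduces the problem to analyzing $\lbrack f(P)\rbrack$.

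The main technical obstacle is showing that $\lbrack f(P)\rbrack\in H_2(A_1,\partial A_1)$ is itself a cylinder class (plus possibly a single sphere that is subsequently consolidated with the sphere produced by the inductive decomposition of $U''$). The three boundary winding numbers of $f(P)$ are $n_i, n_j, -(n_i+n_j)$, summing to zero, so $\partial_*\lbrack f(P)\rbrack = 0$ and $\lbrack f(P)\rbrack$ lies in the image of $H_2(A_1)$. Via the Kunneth decomposition $H_2(A_1)\cong H_2(\mathcal{V}-D^3)\oplus H_1(\mathcal{V}-D^3)$, any absolute $2$-cycle representing $\lbrack f(P)\rbrack$ splits into a $2$-cycle inside $\mathcal{V}-D^3$ (contributing at most one sphere class) and tori of the form $\gamma_\ell\times S^1$; each such torus is homologous modulo a relative boundary to a cylinder obtained by homotoping $\gamma_\ell$ inside $\mathcal{V}-D^3$ to a $1$-chain whose endpoints lie on $\partial(\mathcal{V}-D^3)$. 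The delicate bookkeeping is to ensure that the sphere contributions coming from $P$ and from the inductive decomposition of $U''$ consolidate into at most one overall sphere; this is arranged by absorbing any excess sphere class into neighboring cylinder classes through the product structure $A_1 = (\mathcal{V}-D^3)\times S^1$, which gives ample room to push $2$-cycles in $\mathcal{V}-D^3$ toward $\partial(\mathcal{V}-D^3)$ whenever they become homologically redundant.
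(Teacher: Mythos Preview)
Your inductive step has a genuine gap. When you cut $U$ along the arc $\alpha$ to produce the pair of pants $P$ and the complement $U''$, the new boundary curve $c'$ lies in the interior of $U$, so $f(c')$ lands somewhere in the interior of $A_1$, not in $\partial A_1$ (let alone in $L$). Consequently neither $f(P)$ nor $f(U'')$ is a relative cycle in $(A_1,\partial A_1)$, the splitting $[f(U)]=[f(P)]+[f(U'')]$ is not a decomposition of relative classes, and the inductive hypothesis --- which is stated for regions whose boundary is mapped to $L$ --- cannot be invoked on $U''$. The assertion that the boundary winding numbers of $f|_{\partial P}$ are $n_i,n_j,-(n_i+n_j)$ has the same defect: $c'$ has no winding number in $L$, because it is not mapped there. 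Even granting the cut, the K\"unneth paragraph is not a proof: expressing an absolute class as a sum of tori $\gamma_\ell\times S^1$ does not exhibit it as a sum of cylinders in the sense of the lemma, and the phrase about ``homotoping $\gamma_\ell$ to a $1$-chain whose endpoints lie on $\partial(\mathcal{V}-D^3)$'' is not a well-defined operation on a closed curve. The final ``consolidation'' of spheres is decorative rather than argumentative.

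The paper avoids exactly these problems. It first deforms every boundary loop $\gamma_i$ inside $L$ to $ka_i\beta$ for a single fixed curve $\beta=\{0\}\times S^1\subset L$, so that all boundaries now sit on top of one another. Then, instead of an arbitrary arc, it chooses a cut $\Gamma$ so that (i) one of the two pieces is literally an annulus with both boundary circles equal to multiples of $\beta$, and (ii) $\Gamma$ itself is null-homotopic in $A_1$; the latter is the key computation, and it uses that $\beta$ lies in the center of $\pi_1(A_1)=\pi_1(\mathcal{V}-D^3)\times\pi_1(S^1)$. Filling $\Gamma$ by a pair of oppositely oriented disks then leaves all remaining boundaries on $L$, so the induction is legitimate and each step peels off one honest cylinder. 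The centrality of the fiber class is precisely the algebraic input your pair-of-pants cut is missing.
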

	
	\begin{figure}
		\centering
		\includegraphics[width = 15cm]{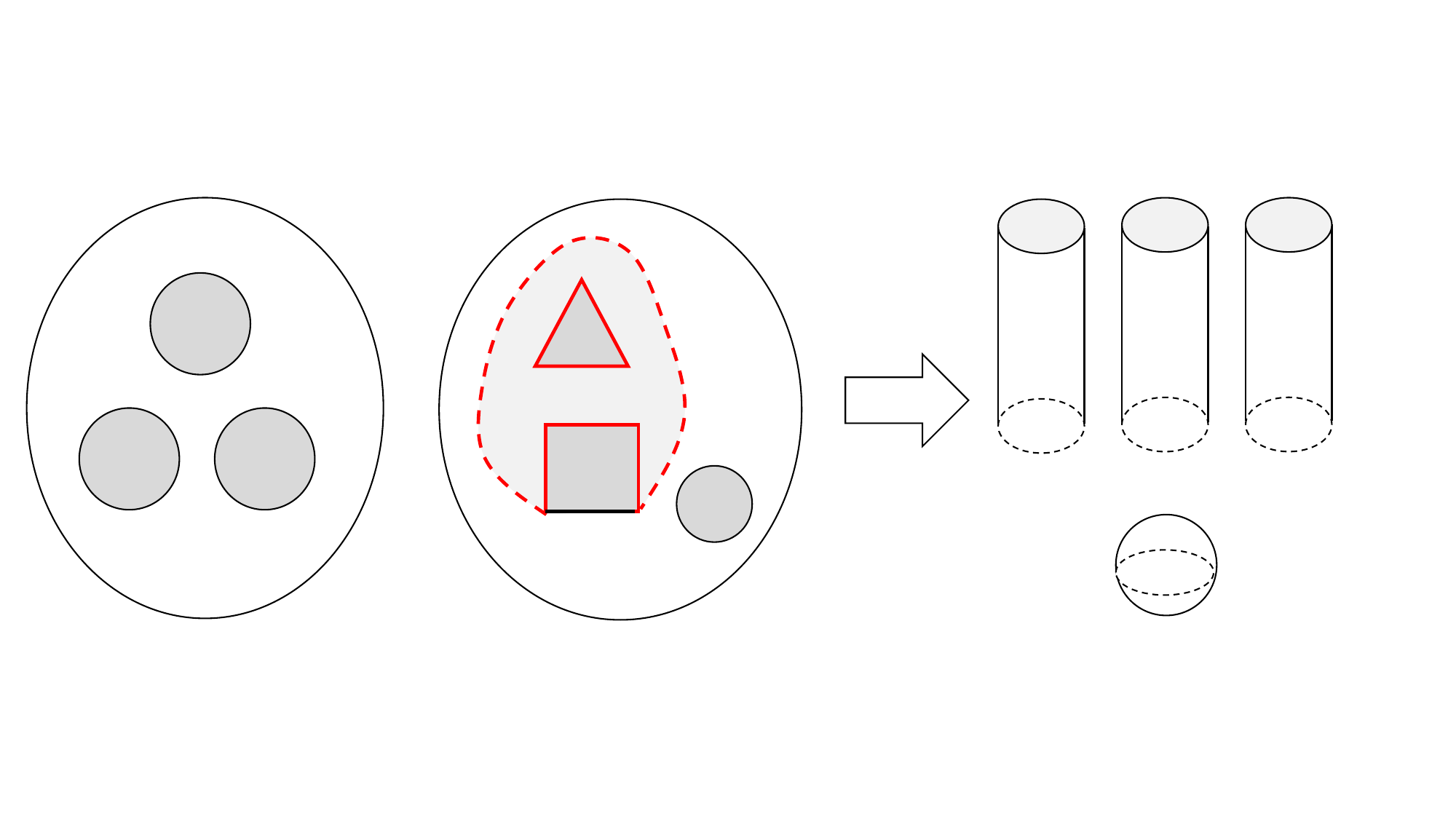}
		\caption{Decomposing the sphere into cylinders}
		\label{figure2}
	\end{figure}
	
	\begin{proof}
		Since $U$ is a region in $S^2$, it must be of the shape in Figure \ref{figure2}. We may assume that its boundary $\partial U$ is given by $\gamma_1 \cup \gamma_2 \cup \dots \cup \gamma_r$, where each $\gamma_i$ is mapped to $L$ under $f$ and represents $k$ times the generator of $\pi_1(L)$, as is shown earlier.
		
		Suppose $\gamma_i$ represents $k a_i$ times the generator in $\pi_1(L) \cong \pi_1(S^2 \times S^1)$. Fix a standard curve $\beta = \{0\} \times S^1 \subset L$ with the base point $p$. Then, $\gamma_i$ can be homotoped to $k a_i \beta$ in $L$ without changing the homology class of the image of $U$. Thus, we may assume $\gamma_i$ is exactly $k a_i \beta$.
		
		Since the $\gamma_i$'s form the boundary of $U$, their sum in homology must vanish:
		\[
		\theta = [\gamma_1] + [\gamma_2] + \dots + [\gamma_r] = 0 \in H_1(A_1).
		\]
		Since $A_1 = (\mathcal{V} - D^3) \times S^1$ and $\gamma_i \subset L \subset \partial A_1$, the class $\theta$ must lie in the image of the map induced by the inclusion map
		\[
		i_{1*}: H_1(S^1) \to H_1((\mathcal{V} - D^3) \times S^1),
		\]
		which is injective by the Künneth formula. Hence, $\theta = 0 \in H_1(S^1)$, and consequently, $\theta = 0 \in \pi_1(S^1)$. This implies that the coefficients must satisfy $a_1 + a_2 + \dots + a_r = 0$.
		
		We proceed by induction on $r$. When $r=1$, $U$ is a disk and $\gamma_1$ is contractible in $A_1$. Since the map $\pi_1(L)\to \pi_1(A_1)$ is injective, $\gamma_1$ is also contractible in $L$. We can then complete $U$ into a sphere by adding a disk in $L$ without changing the intersection number with $R_1$, thereby obtaining the desired sphere. Assume the conclusion holds for $r-1$. Without loss of generality, we may assume
		\begin{align*}
			|a_1| = min\lbrace |a_1|,|a_2|,\dots,|a_r|\rbrace.
		\end{align*}
		If $a_1 = 0$, we fill $\gamma_1$ with a disk in $L$, reducing the problem to the case of $r-1$ curves without changing the homology class of $U$. If $a_1\ne 0$, we may assume that $a_2$ satisfies $a_1a_2<0$. We select a segment of $\gamma_2$ corresponding to $-ka_1\beta$ (possible since $|a_2|\geq|a_1|$) and connect its endpoints by a curve $\Gamma$ that winds around $\gamma_1$ exactly once (as shown by the dotted line in Figure \ref{figure2}). It is clear that the region enclosed by $\gamma_1$, $-ka_1\beta$ and $\Gamma$ is a cylinder, which implies that $-\gamma_1$ and the composition of $-ka_1$ and $\Gamma$ are freely homotopic:
		\begin{align*}
			\lbrack -ka_1\beta\rbrack*\lbrack\Gamma\rbrack = \lbrack-\gamma_1\rbrack = \lbrack -ka_1\beta\rbrack \in\lbrack S^1, A_1\rbrack.
		\end{align*}
		Therefore, as elements of $\pi_1(A_1)$, $\lbrack -ka_1\beta\rbrack*\lbrack\Gamma\rbrack$ and $\lbrack -ka_1\beta\rbrack$ satisfy the conjugate relation:
		\begin{align}\label{405}
			\lbrack -ka_1\beta\rbrack*\lbrack\Gamma\rbrack = g^{-1}\lbrack -ka_1\beta\rbrack g \in \pi_1(A_1).
		\end{align}
		for some $g\in \pi_1(\mathcal{W})$. Note that $A_1 = (\mathcal{V}-D^3)\times S^1$, $\beta$ must be in the center of $\pi_1(A_1) = \pi_1(\mathcal{V}-D^3)\times\pi_1(S^1)$. It follows from (\ref{405}) that $\Gamma$ is null-homotopic.
		
		Therefore, we can fill $\Gamma$ by a pair of disks with opposite orientations, which decompose $U$ into a cylinder, and a punctured disk with $r-1$ boundary components. Note that such an operation does not change the homology class of $U$, and the conclusion follows from the inductive hypothesis. (Figure \ref{figure2} gives an example that $a_1=3$, $a_2=-4$. Each straight segment denotes $k\beta$, and each angular point is mapped to the base point $p$).
	\end{proof}

	\begin{lemma}\label{lem3-4}
		Let $h:T^2 \longrightarrow T^3 = S^1_1\times S^1_2\times S^1_3$ be a map, $\alpha_1,\alpha_2$ the generators of $H_1(T^2)$ and $\beta_1,\beta_2,\beta_3$ the generators of $H_1(T^3)$. If $h_*(\alpha_1) = k\beta_1$, then the intersection number of $h$ and $S^1_3$ is a multiple of $k$.
	\end{lemma}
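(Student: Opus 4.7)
The plan is to translate this intersection number into a cup-product computation via Poincar\'e duality in $T^3$, and then exploit the exterior-algebra structure of the cohomology ring of a torus. Denote by $\beta_i^\ast\in H^1(T^3)$ and $\alpha_i^\ast\in H^1(T^2)$ the cohomology classes dual to the homology generators. The Poincar\'e dual of $[S^1_3]\in H_1(T^3)$ is the codimension-one class $\beta_1^\ast\smallsmile\beta_2^\ast\in H^2(T^3)$. Hence, by naturality of cup product, the signed intersection number is
\begin{align*}
    h\cdot S^1_3 \;=\; \langle h^\ast(\beta_1^\ast\smallsmile\beta_2^\ast),\,[T^2]\rangle \;=\; \langle h^\ast\beta_1^\ast\smallsmile h^\ast\beta_2^\ast,\,[T^2]\rangle.
\end{align*}

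Next I would read off $h^\ast\beta_1^\ast$ and $h^\ast\beta_2^\ast$ from the hypothesis via the Kronecker pairing $\langle h^\ast\beta_j^\ast,\alpha_i\rangle = \langle \beta_j^\ast,h_\ast\alpha_i\rangle$. The assumption $h_\ast\alpha_1=k\beta_1$ immediately gives $\langle h^\ast\beta_1^\ast,\alpha_1\rangle=k$ and $\langle h^\ast\beta_2^\ast,\alpha_1\rangle=0$, so one may write
\begin{align*}
    h^\ast\beta_1^\ast = k\alpha_1^\ast + a\alpha_2^\ast,\qquad h^\ast\beta_2^\ast = b\alpha_2^\ast,
\end{align*}
for suitable integers $a,b\in\mathbb{Z}$ (the $\alpha_1^\ast$-coefficient of $h^\ast\beta_2^\ast$ is forced to be $0$).

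Plugging these in and using the relation $\alpha_2^\ast\smallsmile\alpha_2^\ast=0$, which holds in the integral cohomology of $T^2$ since $H^\ast(T^2)$ is the exterior algebra on $H^1$, the cup product collapses to
\begin{align*}
    h^\ast\beta_1^\ast\smallsmile h^\ast\beta_2^\ast = kb\,(\alpha_1^\ast\smallsmile\alpha_2^\ast).
\end{align*}
Evaluating on $[T^2]$ yields the intersection number $\pm kb$, which is manifestly a multiple of $k$. There is no real obstacle in this argument: once the cohomological reformulation is in place, the factor of $k$ is forced to appear because the hypothesis makes $h^\ast\beta_2^\ast$ a pure $\alpha_2^\ast$-multiple, and the squaring relation $\alpha_2^\ast\smallsmile\alpha_2^\ast=0$ kills the only summand that would not contain $k$.
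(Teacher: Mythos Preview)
Your proof is correct and follows essentially the same approach as the paper: both convert the intersection number to a cup-product evaluation via Poincar\'e duality in $T^3$, use the hypothesis $h_*(\alpha_1)=k\beta_1$ to deduce that $h^*\beta_2^\ast$ has vanishing $\alpha_1^\ast$-coefficient, and then read off the factor of $k$ from the resulting product. The only differences are notational (the paper writes $\hat{\beta}_i$, $c_{12}$, $c_{22}$ where you write $\beta_i^\ast$, $a$, $b$).
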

	\begin{proof}
		We compute
		\begin{align*}
			&\lbrack h\rbrack\cdot\lbrack S^1_3\rbrack = D_{T^3}(\beta_3)(h_*\lbrack T^2\rbrack) = (\hat{\beta}_1\smallsmile \hat{\beta}_2)(h_*\lbrack T^2\rbrack) = (h^*\hat{\beta}_1\smallsmile h^*\hat{\beta}_2)(\lbrack T^2\rbrack)
		\end{align*}
		where $\hat{\alpha}_i,\hat{\beta}_i$ denote the dual basis of $\alpha_i$ and $\beta_i$, and $D_{T^3}$ denotes the Poincare Dual map of $T^3$. Since $h_*(\alpha_1) = k\beta_1$, we have $h^*(\hat{\beta}_1) = k\hat{\alpha}_1 + c_{12}\hat{\alpha}_2$ and $h^*(\hat{\beta}_2) = c_{22}\hat{\alpha}_2$. Hence
		\begin{align*}
			&\lbrack h\rbrack\cdot\lbrack S^1_3\rbrack
			= kc_{22}(\hat{\alpha}_1\smallsmile\hat{\alpha}_2)(\lbrack T^2\rbrack).
		\end{align*}
		This gives the desired conclusion.
	\end{proof}
	
	\begin{lemma}\label{lem3-5}
		Let $U$ be a region in $S^2$ and $f:U\longrightarrow A_1$ a map. If the boundary components of $U$ are all mapped to $k$ times the generator of $\pi_1(L) = \pi_1(S^2\times S^1) = \mathbb{Z}$, then the intersection number of $f_*([U,\partial U])$ and $R_1$ is a multiple of $k$.
	\end{lemma}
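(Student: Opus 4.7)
The plan is to apply Lemma \ref{lem3-3} to decompose $[f(U)] \in H_2(A_1,\partial A_1)$ as a sum of cylinders $C_i$ together with at most one sphere $S$, and then show each piece contributes a multiple of $k$ to the intersection with $R_1$. Since intersection with $R_1$ is additive on $H_2(A_1,\partial A_1)$, this would suffice.

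For the sphere $S$: since $A_1 = (\mathcal{V} - D^3) \times S^1$ and $\pi_2(S^1) = 0$, one has $\pi_2(A_1) \cong \pi_2(\mathcal{V} - D^3)$, so every spherical class is representable by a sphere contained in a single slice $(\mathcal{V} - D^3) \times \{*\}$. Representing $R_1$ in $\mathcal{W}$ as $R_1 \times \{p\}$ with $p \ne *$ makes the two disjoint, so the intersection number is $0$, trivially a multiple of $k$.

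For each cylinder $C : S^1 \times I \to A_1$, both boundary components map to $k$-multiples of the generator $\beta \in \pi_1(L) = \mathbb{Z}$. I would first homotope them within $L$ to the same parametrized loop $\theta \mapsto k\beta(\theta)$; since $L$ is disjoint from $R_1 \times \{p\}$, such a homotopy preserves the interior intersection count. Then glue by the identity to form a closed map $F : T^2 \to A_1$ with $F_*\alpha = k\beta$ for one generator $\alpha$ of $\pi_1(T^2)$. Composing with $\phi = (j, \mathrm{id}) : A_1 = \mathcal{V}' \times S^1 \to \Sigma_0 \times S^1 = T^3$, the K\"unneth identity together with $R_1 = j^{-1}(S^1_1)$ yields
\begin{align*}
    PD_{A_1}[R_1 \times \{p\}] = \phi^* PD_{T^3}[S^1_1],
\end{align*}
so the intersection number of $C$ (equivalently $F$) with $R_1 \times \{p\}$ in $A_1$ equals that of $h = \phi \circ F : T^2 \to T^3$ with $S^1_1$ in $T^3$. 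Since $h_*\alpha = k\beta_3$, Lemma \ref{lem3-4}, applied after swapping the roles of the first and third $T^3$ coordinates, gives that this intersection is a multiple of $k$. Summing over all cylinders and the sphere yields $f(U) \cdot R_1 \equiv 0 \pmod k$.

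The main technical obstacle is justifying the K\"unneth factorization above in the correct relative cohomology setting: both $C$ and $R_1 \times \{p\}$ have boundary, on disjoint pieces of $\partial A_1$ (on $L$ and on the top/bottom of the overtorical band, respectively), so the Poincar\'e-Lefschetz pairing must be set up in complementary relative cohomology groups. A secondary subtlety is the orientation matching when identifying the two boundaries of $C$: the two $k$-multiples of $\beta$ appear with opposite signs as components of $\partial C$, and one must reparametrize one of them before gluing in order to obtain an orientation-preserving identification (otherwise the result is a Klein bottle, in which case passing to the orientable double cover preserves $k$-divisibility). Once these points are addressed, the reduction to Lemma \ref{lem3-4} is direct.
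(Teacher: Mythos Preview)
Your approach is essentially the paper's: decompose via Lemma~\ref{lem3-3} into cylinders plus at most one sphere, close each cylinder to a torus by joining its two boundary circles inside $L$, push everything through $J = j \times \mathrm{id}: A_1 \to \Sigma_0 \times S^1 = T^3$, and invoke Lemma~\ref{lem3-4} (the paper dispatches the sphere by $J_*[C_0]=0$ in $H_2(T^3)$ since $\pi_2(T^3)=0$, in place of your slicing argument). Your two stated worries are not genuine obstacles: once the cylinder is closed to a \emph{closed} torus the pairing is the ordinary one $H_2(A_1)\times H_2(A_1,\partial A_1)\to\mathbb{Z}$, computed via $PD_{A_1}[R_1]=J^*PD_{T^3}[S^1_1]$, so no delicate relative K\"unneth setup is needed; and since the two boundary circles of a cylinder carry \emph{opposite} induced orientations, identifying them after homotoping both to $ka_i\beta$ naturally yields a torus rather than a Klein bottle.
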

	\begin{proof}
		By Lemma \ref{lem3-4}, $f:(U,\partial U)\longrightarrow (A_1,\partial A_1)$ is homologous to the sum of some cylinders $C_1,C_2,\dots,C_l$ and at most one sphere $C_0$. For $i\ge 1$, the boundary of each $C_i$ represents the same element in $\pi_1(L) = \pi_1(S^2\times S^1) = \mathbb{Z}$ and can thereby be connected to each other by a small cylinder in $L$, which forms a torus $T_i$. Moreover, it is straightforward to see that the intersection number of $T_i$ and $R_1$ equals that of $C_i$ and $R_1$, and the meridian of $T_i$ is mapped to $ka_i$ times the generator of $\pi_1(L)$. 
		
		Based on $j$ we can define
		\begin{align*}
			J: (\mathcal{V}-D^3)\times S^1\longrightarrow \mathcal{V}\times S^1 \stackrel{j\times id}{\longrightarrow} \Sigma_0\times S^1.
		\end{align*}
		Then $R = j^{-1}(S^1_1)\times \{1\} = J^{-1}(S^1_1\times \{1\})$. By the Poincare Duality we have
		\begin{align*}
			\lbrack T_i\rbrack\cdot\lbrack R\rbrack = \lbrack T_i\rbrack\cdot\lbrack J^{-1}(S^1_1)\rbrack = J_*\lbrack T_i\rbrack\cdot\lbrack S^1_1\rbrack.
		\end{align*}
		On the other hand, $J_*\lbrack T_i\rbrack$ satisfies the condition of Lemma \ref{lem3-4} and yields a $k$-multiple in the intersection number. By the same reason we also have
		\begin{align*}
			\lbrack C_0\rbrack\cdot\lbrack R_1\rbrack = J_*\lbrack C_0\rbrack\cdot\lbrack S^1_1\rbrack
		\end{align*}
		where the right-hand side equals $0$ as $\pi_2(T^3)=0$. This completes the proof of the Lemma \ref{lem3-5}.
	\end{proof}
	
	\begin{proof}[\textbf{Proof of Lemma \ref{lem3-2}}]
		The conclusion follows from the paragraph before Lemma \ref{lem3-3}, and the result of Lemma \ref{lem3-5}.
	\end{proof}
	
	\begin{lemma}\label{lem3-6}
		Suppose the fiber of $\mathcal{W}$ is incompressible in $\pi_1(\mathcal{W})$. Let $\sigma$ be an immersed sphere in $\mathcal{W}$, then the intersection number of $\sigma$ and $R_1$ equals 0.
	\end{lemma}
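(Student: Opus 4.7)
The plan is to adapt the proof strategy of Lemma \ref{lem3-2}, but exploit the stronger incompressibility hypothesis to promote the ``divisible by $k$'' conclusion all the way to ``equal to zero''. Concretely, I would take an immersed sphere $f : S^2 \to \mathcal{W}$, put it in general position with respect to the splitting $\mathcal{W} = A_1 \cup_{L} A_2$ where $L = S^2 \times S^1$, and study $f^{-1}(L) = \gamma_1 \cup \cdots \cup \gamma_s$ as in the proof of Lemma \ref{lem3-2}.

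The first key step is the following topological observation. The inclusion $L \hookrightarrow \mathcal{W}$ identifies the generator of $\pi_1(L) = \pi_1(S^2 \times S^1) \cong \mathbb{Z}$ with the fiber class of the bundle $\mathcal{W}$. Since the fiber is incompressible, $\pi_1(L) \hookrightarrow \pi_1(\mathcal{W})$ is injective. But each $\gamma_i$ bounds a disk in $S^2$, so $f \circ \gamma_i$ is null-homotopic in $\mathcal{W}$, hence null-homotopic already in $L$. This is exactly the $k = \infty$ version of what the proof of Lemma \ref{lem3-2} extracted: in the notation of Lemma \ref{lem3-3}, every coefficient $a_i$ is now forced to vanish.

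The second step is to revisit the induction of Lemma \ref{lem3-3} for any region $U \subset S^2$ mapped into $A_1$. With all $a_i = 0$, the decomposition step (which required a pair $a_1 a_2 < 0$) never triggers; instead, every boundary circle is capped off by a disk in $L$, since it is null-homotopic there. Thus the image of $U$ is homologous in $H_2(A_1, \partial A_1)$ to a single absolute class $[C_0]$ represented by a sphere, plus cylinders whose boundary longitudes map to $0 \in \pi_1(S^1_1 \times S^1_2 \times S^1) \cong \mathbb{Z}^3$. Applying Lemma \ref{lem3-4} with $k = 0$ shows each such cylinder contributes $0$ to the intersection with $R_1$, while the spherical part contributes
\[
[C_0] \cdot [R_1] = J_{*}[C_0] \cdot [S^1_1] = 0,
\]
using $\pi_2(T^3) = 0$ exactly as at the end of Lemma \ref{lem3-5}. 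Finally, regions of $S^2$ mapped into $A_2$ contribute nothing since $R_1 \subset A_1$ (after arranging the connected-sum points $p_i$ in (\ref{402}) away from $R_1$); summing over all regions of $S^2 \setminus f^{-1}(L)$ yields $\sigma \cdot R_1 = 0$.

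The main obstacle I anticipate is purely bookkeeping: making sure that when the boundary circles $\gamma_i$ are filled by disks in $L$, these disks can be arranged to be disjoint from $R_1$ (handled by choosing $p_i$ generically so $L \cap R_1 = \varnothing$), and that the relative homology identities pushed through $J : A_1 \to T^3$ still apply when the cylinders have $a_i = 0$. Once these technicalities are in place, the argument goes through as a clean ``$k \to \infty$'' limit of Lemma \ref{lem3-2}.
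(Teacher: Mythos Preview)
Your proposal is correct and follows essentially the same approach as the paper: both use incompressibility to conclude that every curve $\gamma_i$ in $f^{-1}(L)$ is already null-homotopic in $L$, cap the resulting cylinders/regions by disks in $L$ to obtain spheres in $A_1$, and then kill the intersection with $R_1$ via $J_*$ and $\pi_2(T^3)=0$. The paper's write-up is terser (it phrases this as ``decompose into cylinders, then the boundary of each cylinder is contractible in $L$, cap off to spheres''), but your more explicit ``all $a_i = 0$ from the outset, so Lemma~\ref{lem3-3} trivializes'' is the same argument with slightly different bookkeeping.
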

	\begin{proof}
		The proof follows the same approach as in Lemma~\ref{lem3-2}. After decomposing the homology class into cylinders, we observe that the boundary of each cylinder must be contractible in $L$ due to the incompressibility condition. We can then complete each cylinder into a sphere in $A_1$ by attaching a pair of disks with opposite orientations in $L$. This operation preserves the intersection number with $R_1$. The conclusion follows by applying the same argument as in the proof of Lemma~\ref{lem3-5}, combined with the fact that $\pi_2(\Sigma_0\times S^1) = \pi_2(T^3) = 0$.
	\end{proof}
	
	With these preparation, we now carry out the proof of Theorem \ref{thm4}.
	\begin{proof}[\textbf{Proof of Theorem \ref{thm4}}]

		We first deal with the case that $X$ is a $K(\pi,1)$ 3-dimensional manifold. By Lemma \ref{lem3-2} and Lemma \ref{lem3-6}, combined with the fact that $T$ and $R_1$ has the intersection number 1, and the assumption that the fiber of $E$ is homotopically non-trivial, we conclude that the homology class $\lbrack T\rbrack$ is non-spherical. Since $T = R_2^+\cap \Sigma_0\times S^1$ by (\ref{411}), we conclude that $\mathcal{W}$ is \textit{SYS}. If the bundle $E$ in Theorem \ref{thm4} admits a PSC metric, then the width of $\mathcal{W}$ has a uniform upper bound. However, by (\ref{404}) the width of $\mathcal{V}$ can be arbitrarily large. It follows from our pullback construction that the width of $\mathcal{U}$ and $\mathcal{W}$ can also be arbitrarily large (compared with \eqref{33}\eqref{34}). This contradicts Theorem \ref{thm7}. 
		
		To see the conclusion holds true for manifolds with degree 1 maps to $E$, we observe that any band admitting a degree 1 map to an \textit{SYS} band is \textit{SYS} (see \cite{ref7}, Sec.5). For the rigidity part, we note that as a consequence of \cite{CG}, the universal covering of a Ricci flat but non-flat compact 4-manifold must also be compact. The conclusion follows as $E$ has a non-compact universal covering space.
		
		Now let $X$ be a Cartan-Hadamard manifold of dimension $n\le 6$. We take $\Tilde{X}$ to be the universal covering of $X$ and $\Tilde{B}$, $\Tilde{E}$ as above. By the same argument as in the proof of Theorem \ref{thm2}, there exists a torical band $\mathcal{V}\cong T^{n-1}\times \lbrack 0,1\rbrack$ in $\Tilde{X}$, which can be prescribed arbitrarily long. Construct $\mathcal{U}$ and $\mathcal{W}$ as in (\ref{402})(\ref{403}), we have
		\begin{align*}
			&\mathcal{U} = \mathcal{V}\#_{p_1}M\#_{p_2}M\#\dots\#_{p_k}M = \mathcal{V}\#N,\\
			&\mathcal{W} = P\#_{p_1\times S^1}Z\#_{p_2\times S^1}Z\#\dots\#_{p_k\times S^1}Z = P\#_{S^1}Y,
		\end{align*}
		where $P = T^{n-1}\times \lbrack 0,1 \rbrack\times S^1$. We write
		\begin{equation}
			\begin{split}
				P &= T^{n-1}\times\lbrack 0,1\rbrack\times S^1\\
				&= S^1_1\times S^1_2\times\dots\times S^1_{n-1}\times\lbrack 0,1\rbrack\times S^1_n.
			\end{split}
		\end{equation}
		
		Let
		\begin{equation}
			\begin{split}
				&Q_0 = S^1_1\times S^1_2\times\dots\times S^1_{n-1}\times\lbrace 0\rbrace\times S^1_n,\\
				&Q_i = S^1_1\times S^1_2\times\dots\times \hat{S}^1_i\times\dots\times S^1_{n-1}\times\lbrack 0,1\rbrack\times S^1_n, \\
				&i = 1,2,\dots,n-2,\\
				&R = S^1_1\times S^1_2\times\dots\times S^1_{n-2}\times\lbrack 0,1\rbrack,\\
				&T = S^1_{n-1}\times\lbrace 0\rbrace\times S^1_n.
			\end{split}
		\end{equation}
		Since $Q_i$ is of codimension 1 in $P$ and contains the $S^1_n$ factor, we can perturb it slightly to avoid $q_1\times S^1_n,q_2\times S^1_n,\dots,q_r\times S^1_n$. This gives a natural embedding of $Q_i,R,T$ into $\mathcal{W}$. For notational simplicity, the embedded images of $Q_i,R,T$ are still denoted by themselves. Clearly,
		\begin{align}\label{47}
			T = Q_0\cap Q_1\cap\dots\cap Q_{n-2}.
		\end{align}
		Since $Q_0$ and $Q_i\quad (i\ge 1)$ can be represented by homology classes in $H_n(\mathcal{W})$ and $H_n(\mathcal{W},\partial\mathcal{W})$, we can denote their Poincare Dual by $\beta \in H^1(\mathcal{W},\partial\mathcal{W})$ and $\alpha_i\in H^1(\mathcal{W})$. Therefore, it follows from (\ref{47}) that
		\begin{align}
			\lbrack T\rbrack = \lbrack\mathcal{W},\partial\mathcal{W}\rbrack\smallfrown\alpha_1\smallfrown\alpha_2\smallfrown\dots\smallfrown\alpha_{n-2}\smallfrown\beta\in H_2(\mathcal{W})
		\end{align}
		where $\lbrack T\rbrack$ is the homology class represented by $T$. 
		
		Following the same argument in the proof of Lemma \ref{lem3-2} and Lemma \ref{lem3-6}, we obtain that the intersection number of any 2-dimensional spherical class in $\mathcal{W}$ and $R$ is a multiple of $k$, when the fiber of $E$ is of order $k$ in the fundamental group, or equals $0$ when the fiber is incompressible. To be more precise, we may take the projection map $j: T^{n-1}\times\lbrack 0,1\rbrack\longrightarrow \Sigma_0 = T^{n-1}$, which serves as the counterpart of the map '$j$' in (\ref{j}) in the 3-dimensional case. Note that $\lbrack T\rbrack\cdot\lbrack R\rbrack = 1$. Combined with our assumption that $k>1$, we see that $\lbrack T\rbrack$ is non-spherical. Therefore, $\mathcal{W}$ is a \textit{SYS} band, and the conclusion follows.
		
		For the rigidity part, let $g_E$ be a Ricci flat metric on $E$. It follows from Lemma \ref{lem10} that $(\Tilde{E},g_{\tilde{E}})$ has the volume growth rate at least $n$. Denote the universal covering of $E$ by $\hat{E}$. By the Cheeger-Gromoll splitting theorem \cite{CG} we have $\hat{E} = W^s\times \mathbb{R}^{n+1-s}$, where $W$ is a closed simply connected Ricci flat manifold. It follows from Lemma \ref{lem8} that
		\begin{align*}
			n+1-s\ge r(\hat{E})\ge r(\Tilde{E})\ge n.
		\end{align*}
		Therefore, we see that $s=0$ and $\hat{E}$ is isometric to the Euclidean space. This completes the proof of Theorem \ref{thm4}.
	\end{proof}
	
	\begin{remark}
		Our method can also be used to derive some results on non-compact manifolds. Let $X$ be a $K(\pi,1)$ 3-manifold (not necessarily compact) with $\pi_1(X)\ne 0$. Following our strategy in the proof of Theorem \ref{thm4}, one is able to show that the $S^1$ bundle over $X\#_{p_1}M_1\#_{p_2}M_2\#\dots$ does not admit any uniformly PSC metric, where $p_i$ ($i = 1,2,\dots)$ form a discrete point set of $X$ and $M_i$ are closed manifolds.
	\end{remark}
	
	Employing an analogous approach, we can establish a PSC obstruction result for $S^1$ bundles over $1$-enlargeable manifolds.
	\begin{definition}
		Let $X$ be an $n$-dimensional closed manifold. We say $X$ is $1$-enlargeable, if there exists a metric $g$ on $X$ (or equivalently, for each metric $g$ on $X$), such that for each $\epsilon>0$, there exist a covering $(\tilde{X},\tilde{g})$ of $(X,g)$ and a degree-$1$ map $f:(\tilde{X},\tilde{g})\longrightarrow S^n(1)\subset\mathbb{R}^{n+1}$ which is constant outside a compact set, satisfying $\mathrm{Lip}(f)<\epsilon$.
	\end{definition}
	\begin{proposition}\label{thm3}
		Let $B$ be an $n$-dimensional 1-enlargeable manifold and $E$ be an $S^1$ bundle over $B$ ($n+1\le 7$). If the fiber of $E$ is homologically non-trivial, then $E$ carries no PSC metric.
	\end{proposition}
	
	In fact, we have the following more general result.
	\begin{proposition}\label{thm3'}
		Let $f: Y^{n+1}\longrightarrow X^n$ be a map, where $X^n$ is 1-enlargeable, and the homological pullback of a single point in $X$ under $f$ is nontrivial in $H_1(Y)$. Then $Y$ does not admit PSC metrics.
	\end{proposition}
	\begin{proof}
		Since $X$ is 1-enlargeable, we know that for any given $\epsilon>0$, there exist a covering $\Tilde{X}$ of $X$ and a degree 1 map $h:\Tilde{X}\longrightarrow S^n\subset \mathbb{R}^{n+1}$ such that $Lip(h)<\epsilon$. By pulling back a fixed torical band of width $\delta(n)$ in $S^n$ via $h$, we obtain a 1-overtorical band $\mathcal{U}$ in $\Tilde{X}$ of width $\frac{\delta(n)}{\epsilon}$. By considering the pullback diagram, we have
		\begin{align*}
			\begin{CD}
				Y^{n+1}     @>f>>  X\\
				@AAA        @AAA\\
				\Tilde{Y}     @>\Tilde{f}>> \Tilde{X}
			\end{CD}
		\end{align*}
		where $\Tilde{Y}$ is a covering space of $Y$ and $\Tilde{f}$ is a proper map. By perturbing $\Tilde{f}$ slightly, we may assume that it transversally intersects $\mathcal{U}$. Let $\mathcal{V} = \Tilde{f}^{-1}(\mathcal{U})$. For a generic point $p\in\mathcal{U}$, we denote the homological pullback of $p$ under $\Tilde{f}$ by $\Tilde{f}_!(p)$,$i.e.$, the homology class represented by $f^{-1}(p)$. Since $\mathcal{U}$ is 1-overtorical, by pulling back the fundamental class of $T^{n-1}\times \lbrack 0,1\rbrack$ we obtain
		\begin{align}
			\lbrack \mathcal{U},\partial\mathcal{U}\rbrack^* = \alpha_1\smallsmile\alpha_2\smallsmile\dots\smallsmile\alpha_{n-1}\smallsmile\beta
		\end{align}
		where $\alpha_i\in H^1(\mathcal{U})$ and $\beta\in H^1(\mathcal{U},\partial\mathcal{U})$. It follows that
		\begin{align*}
			&\Tilde{f}_!(p)\\ 
			= &D_{(\mathcal{V},\partial\mathcal{V})}\circ f^*\circ D_{(\mathcal{U},\partial\mathcal{U})}^{-1}(p)\\
			= &D_{(\mathcal{V},\partial\mathcal{V})}\circ f^*(\alpha_1\smallsmile\alpha_2\smallsmile\dots\smallsmile\alpha_{n-1}\smallsmile\beta)\\
			= &\lbrack\mathcal{V},\partial\mathcal{V}\rbrack\smallfrown f^*\alpha_1\smallfrown f^*\alpha_2\dots\smallfrown f^*\alpha_{n-1}\smallfrown f^*\beta.\\
			\ne &0
		\end{align*}
		Here $D_{(\mathcal{V},\partial\mathcal{V})}$ denotes the Poincare Dual map for $(\mathcal{V},\partial\mathcal{V})$ and $\lbrack \mathcal{U},\partial\mathcal{U}\rbrack^*$ represents the generator of top dimensional relative cohomology class. From (\ref{401}) we see that $\mathcal{V}$ is \textit{SYS}.
		
		On the other hand,
		\begin{align*}
			width(\mathcal{V})\ge \frac{1}{Lip\Tilde{f}}width(\mathcal{U}) = \frac{1}{Lipf}width(\mathcal{U})\ge \frac{1}{Lipf}\frac{\delta(n)}{\epsilon}.
		\end{align*}
		In conjunction with Theorem \ref{thm7}, we conclude that $Y$ admits no PSC metric.
	\end{proof}
	
	\begin{remark}
		In \cite{ref7}, Gromov defined the class of \textit{SYS}-enlargeable manifolds which includes all manifolds of infinite \textit{SYS} width. In this subsection, we actually produce new classes of \textit{SYS}-enlargeable manifolds.
	\end{remark}
	
	We conclude this subsection with a proof of Corollary \ref{cor2}.
	\begin{proof}[\textbf{Proof of Corollary \ref{cor2}}]
		If the Hurewicz map on $\pi_2(B)$ vanishes, by Lemma \ref{A1}, the boundary map $\partial$ in the exact sequence
		\begin{align*}
			\pi_2(B)\stackrel{\partial}{\longrightarrow}\pi_1(S^1)\stackrel{i}{\longrightarrow}\pi_1(E)\longrightarrow\pi_1(B)
		\end{align*}
		vanishes, so the fiber must be incompressible in $E$. Hence, the conclusion follows directly from Theorem \ref{thm3.5}. 
		
		If $B$ is of dimension 3 and contains no $S^2\times S^1$ factor, then each prime factor of $B$ has trivial $\pi_2$. Combined with Lemma \ref{lem11} one concludes the restriction of $E$ on each prime factor has incompressible fibers, which implies that $E$ has incompressible fibers. The conclusion then follows from Theorem \ref{thm4}.
	\end{proof}
	
	\subsection{Examples}
	
	To give a better interpretation of how the topological non-trivial condition of the $S^1$ fiber affects the geometry of the bundle, we would also like to include some examples here.
	
	We begin with the simplest case, $T^2$. Even in this setting, there exists an $S^1$-bundle over $T^2$ whose fiber is zero-homologous. It suffices to take those with non-divisible Euler class. In comparison, the fiber is always \textit{incompressible} because $\pi_2(T^2) = 0$.
	
	A similar example is $T^n \# T^n$, which has the twisted $S^1$ stability property by Theorem \ref{cor2}. Additionally, one can observe that the Schoen-Yau descent argument can hardly be applied in these cases, as it requires a homologically free fiber---a condition satisfied only when the bundle is trivial, as an immediate consequence of Lemma \ref{A4}.
	
	There also exist examples with compressible but homologically nontrivial fibers. Consider the $S^1$ bundle over $T^4\#\mathbb{C}\mathbb{P}^2$, whose restriction on $T^4$ is trivial, while behaving like $S^1\longrightarrow S^5\slash\mathbb{Z}_p\longrightarrow \mathbb{C}\mathbb{P}^2$ on $\mathbb{C}\mathbb{P}^2$. Clearly, it admits no PSC metric by Theorem \ref{thm3}. Moreover, if the restriction of the bundle on $T^4$ has non-divisible Euler class, then the fiber is forced to be zero-homologous, but still homotopically non-trivial provided $p>1$. In this case, Theorem \ref{thm4} is the only result to be applied.
	
	We conclude this section by providing an example to show that even the homologically nontrivial condition for $S^1$ fibers is not always a sufficient condition to rule out PSC metrics on fiber bundles. This phenomenon can happen only when the dimension of the total space is greater than or equal to $4$, exhibiting a tremendous difference compared with the dimension $3$ case, as demonstrated by Theorem \ref{thm2}.
	\begin{example}\label{eg: fail in n ge 4}
		
		$\quad$
		
		\begin{itemize}
			\item Let $\bar{E}$ be an $S^1$ bundle over the $K_3$ surface, such that the divisibility of the Euler class of $\bar{E}$ is $3$. Due to Lemma \ref{A1}, $\bar{E}$ is a spin 5-manifold with $\pi_1(\bar{E}) = \mathbb{Z}_3$. By \cite[Theorem 2.12]{ref17}, $\bar{E}$ carries a PSC metric.
			\item For $n\ge 4$, let $B_n = K_3\times T^{n-4}$ and $\bar{E}_n = \bar{E}\times T^{n-4}$. It follows that $\bar{E}_n$ is an $S^1$ bundle over $B_n$ with a homotopically nontrivial fiber. Since $B_n$ is $\hat{A}$-enlargeable in the sense of \cite{ref6}, it carries no PSC metric. This shows that the twisted $S^1$ stability property fails for $B_n$.
		\end{itemize}
	\end{example}
	\section{Codim 2 PSC obstruction in twisted settings}
	As an application of our result, we give  a partially affirmative answer to Gromov's quadratic decay question on non-trivial bundles in \cite{ref7}. 
	
	\begin{proof}[\textbf{Proof of Theorem \ref{thm9}}]
		Let $V$ be a small tubular neighborhood of the zero section of $X$, then $\partial V$ is an $S^1$ bundle over $B$. By Definition \ref{defn: twisted S^1 property}, we know that any closed manifold admitting a degree 1 map to $\partial V$ cannot support a PSC metric. Since $X\backslash V$ is diffeomorphic to $\partial V\times\lbrack 0,+\infty)$, we obtain the natural projection map
		\begin{align}\label{2}
			p: X\backslash V\longrightarrow \partial V.
		\end{align}

		Let $f:Y\longrightarrow X$ be a degree 1 map. Perturb $f$ such that it intersects transversally with $\partial V$. Let $V'=f^{-1}(V)$, then $S=\partial V'=f^{-1}(V)$ is an embedded submanifold of $Y$. Define
		\begin{align*}
			U_R = \lbrace x\in Y\backslash V', d_S(x)\le R\rbrace
		\end{align*}
		where $d_S$ denotes a smooth approximation function of the distance function to $S$. Let $S_R=\partial U_R\backslash S$. For any hypersurface $\Sigma$ in $U_R$ separating $S$ and $S_R$, let $W$ be the region bounded by $S$ and $\Sigma$. By (\ref{2}) we have
		\begin{align}\label{3}
			deg(p\circ f|_{\Sigma}) = deg(p\circ f|_S) = degf = 1.
		\end{align}
		Since $\partial V$ has the dominated twisted $S^1$ stability property, we know that $\Sigma$ admits no PSC metric.

		By a standard $\mu$-bubble argument (see \cite{ref23}), the width estimate holds for the band $U_R$. Specifically, if $U_R$ has scalar curvature bounded below by $\sigma$, then its width cannot exceed $\frac{2\pi}{\sqrt{\sigma}}$. 
		
		Since the width of $U_R$ is at least $R$, we obtain the curvature bound:
		\begin{align}\label{eq:curvature_bound}
			\inf_{x\in U_R} \mathrm{Sc}(x) \leq \frac{4\pi^2}{R^2}.
		\end{align}
		
		Letting $R_0$ denote the diameter of $V'$, we immediately obtain inequality \eqref{1} from \eqref{eq:curvature_bound}.
	\end{proof}
	
	Note that all the manifolds in Corollary \ref{cor2} have the \textit{dominated twisted $S^1$ stability property}, and hence Theorem \ref{thm9} applies. It follows from Theorem \ref{thm4} and a same argument as above that for any 3-manifold which does not admit PSC metric, the quadratic decay inequality in Theorem \ref{thm9} also holds under the homotopically nontrivial condition of the fiber.
	
	With the benefit of PSC obstruction on non-trivial bundles, we can also prove the following result:
	
	\begin{proposition}\label{thm10}
		Let $X^n$ ($n\le 7$) be a non-compact manifold with $B^{n-2}$ as a codimension 2 deformation retract, where $B$ has the \textit{dominated twisted $S^1$ stability property}. Then there exists a constant $R_0$, such that
		\begin{align*}
			\inf_{x\in B(R)}Sc(x)\le \frac{4\pi^2}{(R-R_0)^2}.
		\end{align*}
	\end{proposition}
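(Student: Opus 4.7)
The plan is to imitate the proof of Proposition \ref{thm9} step by step, with the tubular neighborhood of $B$ inside $X$ playing the role of the small neighborhood of the zero section of the vector bundle in the earlier argument. First I would use the deformation retract hypothesis to smoothly embed $B$ as a codimension $2$ submanifold of $X$ and pick a tubular neighborhood $V$; its boundary $\partial V$ is the unit circle bundle of the normal bundle of $B$ in $X$, hence an $S^{1}$ principal bundle over $B$. By the dominated twisted stabilized property of $B$, every closed manifold admitting a degree $1$ map to $\partial V$ carries no PSC metric.

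The only new ingredient, and the main obstacle of the proof, is to produce the analog of the radial projection $p:X\setminus V\to\partial V$ used in Proposition \ref{thm9}. Since both $V$ and $X$ deformation retract onto $B$, a Gysin-sequence computation identifies $H^{\ast}(X\setminus V)$ with $H^{\ast}(\partial V)$, and this upgrades to the statement that the inclusion $\partial V\hookrightarrow X\setminus V$ is a homotopy equivalence; smoothing its homotopy inverse yields the desired retraction. Alternatively, using the homotopy $r_t:X\to X$ joining the identity to the retraction onto $B$, one may set
\begin{align*}
p(x)=r_{\tau(x)}(x)\in\partial V,\qquad \tau(x)=\inf\{t\in[0,1]:r_t(x)\in V\},
\end{align*}
after arranging $r_t$ to be transverse to $\partial V$; this gives a continuous (and, after smoothing, smooth) retraction with $p|_{\partial V}=\mathrm{id}$.

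With $p$ in hand the remainder of the argument is a direct copy of Proposition \ref{thm9}. Set $S=\partial V$ and, for each large $R$, define
\begin{align*}
U_R=\{x\in X\setminus V: d(x,S)\le R\},\qquad S_R=\partial U_R\setminus S.
\end{align*}
Any smooth hypersurface $\Sigma$ in $U_R$ separating $S$ from $S_R$ is homologous to $S$ inside $X\setminus V$, so $\deg(p|_\Sigma)=\deg(p|_S)=1$, and the dominated twisted stabilized property forces $\Sigma$ to carry no PSC metric. The standard $\mu$-bubble band-width inequality then gives $\mathrm{width}(U_R,g)\le 2\pi/\sqrt{\sigma}$ with $\sigma=\inf_{U_R}Sc$; choosing $R_0$ to absorb the distance from a fixed basepoint to $S$ together with the diameter of $V$ gives $U_{R-R_0}\subset B(R)$, whence $\inf_{B(R)}Sc\le 4\pi^2/(R-R_0)^2$, as required.
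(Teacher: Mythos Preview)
Your overall strategy---take a tubular neighbourhood $V$ of $B$, produce a map $p:X\setminus V\to\partial V$ that restricts to a degree~$1$ self-map of $\partial V$, and then feed separating hypersurfaces into the band-width $\mu$-bubble estimate exactly as in Proposition~\ref{thm9}---is the same as the paper's. The only point of divergence, and the only place where your write-up is not yet airtight, is the construction of $p$.

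The paper builds $p$ by the obstruction-theory argument of Gromov--Lawson (Theorem~7.47 of \cite{ref6}, already rehearsed in Lemma~\ref{lem3-1}): starting from the retraction $r:X\to B$, one first lifts $r|_{\partial V}$ to a degree~$1$ self-map of $\partial V$ along the circle bundle $\partial V\to B$, and then extends over $X\setminus V$ because the sole obstruction lies in $H^{2}(X\setminus V,\partial V;\mathbb{Z})\cong H^{2}(X,V;\mathbb{Z})\cong H^{2}(X,B;\mathbb{Z})=0$, the last equality coming from the deformation-retract hypothesis. This is short and fully rigorous.

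Your route~(a) aims for something strictly stronger, namely that $\partial V\hookrightarrow X\setminus V$ is a homotopy equivalence. The cohomology identification you want is really excision (not the Gysin sequence), and it does give $H^{*}(X\setminus V,\partial V)=0$; but upgrading this to a homotopy equivalence needs a $\pi_{1}$-isomorphism first and then a relative Hurewicz step, neither of which you supply. Your route~(b) is geometrically appealing, but the first-hitting time $\tau(x)$ is not automatically continuous: a path $t\mapsto r_{t}(x)$ that grazes $\partial V$ tangentially can make $\tau$ jump, and ``arranging $r_{t}$ transverse to $\partial V$'' is not the transversality statement that fixes this. Either approach could likely be repaired, but the obstruction-theory argument bypasses these issues entirely and is the intended mechanism here.
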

	\begin{proof}
		Let $F:X\longrightarrow B$ be the retraction map, and let $V$ be the tubular neighborhood of $B$, then $\partial V$ is diffeomorphic to an $S^1$ bundle over $B$. By Lemma \ref{A5}, $F|_{X\backslash V}$ can be lifted to $\Tilde{F}:X\backslash V\longrightarrow \partial V$, with $\Tilde{F}|_{\partial V}$ homotopic to the identity. This implies that any hypersurface separating $\partial V$ from the infinity admits a degree 1 map to $\partial V$, and the conclusion follows from an argument analogous to that in the proof of Theorem \ref{thm9}.
	\end{proof}
	
	By the same argument, we obtain the proof of Corollary \ref{cor3}.
	
	$\quad$

	\textbf{Acknowledgements} This work is supported by National Key R\&D Program of China 2020YFA0712800. I am deeply indebted to my advisor Prof. Yuguang Shi for suggesting this problem, constant support and many inspiring conversations. I am grateful to Prof. Yi Liu and Prof. Yi Xie for their enlightening conversations on the geometrization conjecture and gauge theory. I wish to thank Dr. Jintian Zhu for many helpful discussions. I am also grateful to anonymous referees for their careful reading and suggestions which significantly improve the exposition of this paper.

	\appendix
	\section{Topological and algebraic preliminary}
	In this appendix, we prove several topological and algebraic facts used in the paper, including useful properties of $S^1$ bundles that are frequently applied in Section 4.
	
	\begin{lemma}\label{A1}
		Let $\pi:E\longrightarrow B$ be an $S^1$ bundle with the Euler class $e$. Then in the long exact sequence
		\begin{align*}
			\pi_2(B)\stackrel{\partial}{\longrightarrow}\pi_1(S^1)\stackrel{i_*}{\longrightarrow}\pi_1(E)\stackrel{\pi_*}{\longrightarrow}\pi_1(B)
		\end{align*}
		we have
		\begin{align*}
			\partial:\sigma\longmapsto e(\mathrm{hur}(\sigma))
		\end{align*}
		for $\sigma\in \pi_2(B)$.
	\end{lemma}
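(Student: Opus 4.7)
The plan is to reduce the statement to the case of a principal $S^1$-bundle over $S^2$ by naturality of the boundary map under bundle pullback, and then to compute that base case directly from a clutching description of the bundle.

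First, given $\sigma\in\pi_2(B)$ represented by $\sigma\colon S^2\to B$, I would form the pullback $\sigma^*E\to S^2$ and use the commutative square
\begin{align*}
\begin{CD}
\sigma^*E   @>\tilde\sigma>>  E\\
@VV\tilde\pi V                @VV\pi V\\
S^2         @>\sigma>>         B.
\end{CD}
\end{align*}
Since $\tilde\sigma$ is the identity on fibers, naturality of the connecting homomorphism in the long exact sequence of a fibration gives $\partial\circ\sigma_*=\partial'$, where $\partial'$ is the boundary map for $\sigma^*E$. Writing $\iota_2$ for the generator of $\pi_2(S^2)$, one has $\sigma=\sigma_*(\iota_2)$, hence $\partial\sigma=\partial'\iota_2$.

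Next I would identify $\sigma^*E$ by its Euler class. Principal $S^1$-bundles over $S^2$ are classified by $H^2(S^2;\mathbb{Z})\cong\mathbb{Z}$ via the Euler class, and naturality of characteristic classes gives $e(\sigma^*E)=\sigma^*e$. Pairing with the fundamental class and applying the standard fact that $\mathrm{hur}(\sigma)=\sigma_*[S^2]$ yields the integer
\begin{equation*}
\langle \sigma^*e,[S^2]\rangle=\langle e,\sigma_*[S^2]\rangle=e(\mathrm{hur}(\sigma)).
\end{equation*}
So the lemma is reduced to the claim: if $\xi\to S^2$ is a principal $S^1$-bundle of Euler number $n$, then $\partial'\iota_2=n\in\pi_1(S^1)$.

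For this base case I would use the clutching construction: decompose $S^2=D^2_+\cup_{S^1}D^2_-$ and realize $\xi$ as $(D^2_+\times S^1)\sqcup(D^2_-\times S^1)$ glued along the equator by a map $\phi\colon S^1\to S^1$ of degree $n$. Representing $\iota_2$ by first sweeping $D^2_-$ down to the equator and then lifting $D^2_+$, one uses the two obvious local trivializations to lift each hemisphere piece to $\xi$; the resulting lift fails to close up on the equator by exactly the loop $\phi$, so $\partial'\iota_2=[\phi]=n\in\pi_1(S^1)$. As a sanity check, the Hopf fibration $S^1\to S^3\to S^2$ has Euler number $1$ and, by the long exact sequence (using $\pi_1(S^3)=\pi_2(S^3)=0$), its $\partial'$ is an isomorphism $\mathbb{Z}\to\mathbb{Z}$, pinning down both the factor and the sign.

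The main obstacle is the sign bookkeeping in the final step: one must fix compatible conventions for the generator of $\pi_2(S^2)$, the orientation of $S^2$, the generator of $\pi_1(S^1)$, and the sign of the Euler class, and then verify that the clutching computation really outputs $+n$ rather than $-n$. The cleanest way to resolve this is to normalize all conventions so that the Hopf bundle gives $\partial'\iota_2=+1$, after which the general case follows by the naturality reduction above.
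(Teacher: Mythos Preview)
Your argument is correct and takes a genuinely different route from the paper. You reduce by naturality of the long exact sequence under pullback to the case of an $S^1$-bundle over $S^2$, and then read off the boundary map from the clutching function, with the Hopf fibration fixing the sign. The paper instead works directly: it represents $\sigma$ by $F\colon I\times I\to B$, passes to the associated $\mathbb{R}^2$-bundle $\tilde E$ with $B$ as the zero section, perturbs $F$ to be transverse to $B$, and then argues that each transverse intersection point contributes one unit of winding to the lift $\tilde F|_K\colon K\to \pi^{-1}(b_0)$; the total is the oriented intersection number of $F$ with the zero section, which by Poincar\'e duality equals $e(\mathrm{hur}(\sigma))$. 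Your approach is cleaner and avoids the somewhat informal ``strapping'' of intersection points toward $K$ in the paper; the paper's approach, on the other hand, gives a direct geometric picture linking the boundary homomorphism to the intersection-theoretic definition of the Euler class without any reduction step.
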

	\begin{proof}
		Pick a base point $b_0\in B$ and $x_0\in\pi^{-1}(b_0)$ . Let
		\begin{equation}
			\left\{
			\begin{aligned}
				&F:[0,1]\times [0,1] \longrightarrow B\\
				&F|_{\partial ([0,1]\times [0,1])} = b_0
			\end{aligned}
			\right.	
		\end{equation}
		be an element in $\pi_2(B,b_0)$, then $F$ can be lifted to a map $\Tilde{F}:[0,1]\times [0,1]\longrightarrow E$ satisfying
		\begin{equation}
			\begin{split}
				\Tilde{F}(J) &= x_0,\\
				\Tilde{F}(K)&\subset \pi^{-1}(b_0)
			\end{split}
		\end{equation}
		where $J = \partial([0,1]\times [0,1])\backslash\lbrack 0,1\rbrack\times\lbrace 1\rbrace$, $K=\lbrack 0,1\rbrack\times\lbrace 1\rbrace$. It follows that $\Tilde{F}|_K: K\longrightarrow \pi^{-1}(b_0)=S^1$ represents an element in $\pi_1(S^1)$ corresponding to the image of $F$ under $\partial$ (see \cite[Theorem 4.41]{ref11}). 
		
		To compute this element, we first view $F$ as a map to $\bar{E}$, the $\mathbb{R}^2$ vector bundle that induces $E$. We regard $B$ as the zero section of $\bar{E}$. Since $\bar{E}$ and $E$ are homotopic equivalent, we can lift $F$ to a map $\bar{F}:[0,1]\times[0,1]\longrightarrow\bar{E}$ such that $\bar{F}([0,1]\times [0,1]) = x_0$.
		
		By perturbing $\bar{F}$ while keeping it stationary on $\partial([0,1]\times [0,1])$, we may assume that it intersects transversally with the zero-section $B$ at points $p_1,p_2,\dots,p_k$. We observe that $p_i$ can be chosen sufficiently close to $K$. In fact, for a point $p_i$, we can join it to a small neighborhood of $K$ by a curve $c_i$, such that the only intersection of $c_i$ and $B$ is $p_i$. It's not hard to imagine that $p_i$ can be strapped to a neighborhood of $K$ along the curve $c_i$. Next, we remove the small disk around $p_i$ and construct a lifting $\tilde{F}$ of $F$ to $\bar{E}$ which does not touch the zero section. This gives a desired lifting as in (A.2). 
		
		Since $F$ intersects the zero section of $E$ transversally, we see that each point in the intersection contributes exactly once in the process of winding the $S^1$ fiber over $b_0$. Therefore, we conclude that the element in $\pi_1(S^1)$ is given by the oriented intersection number of $F$ and the zero section. The relation with the Euler class is a consequence of the Thom isomorphism, which completes the proof.
		
	\end{proof}
	
	\begin{lemma}\label{A4}
		Let $B^n$ be a closed oriented manifold with $H^2(B,\mathbb{Z})$ free, and let $\pi:E\longrightarrow B$ be an oriented $S^1$ bundle over $B$ with the Euler class $e$. Let $\tau$ be the homology class represented by the fiber in $H_1(E)$, then\\
		(1) $\tau=0$ if and only if $e$ is non-divisible;\\
		(2) $\tau$ is a $k$-torsion element if and only if $e=k\alpha$, where $\alpha$ is non-divisible;\\
		(3) $\tau$ is a free element if and only if $e=0$.
	\end{lemma}
	\begin{proof}
		
		Let $p$ be a point in $B$. It suffices to calculate the homology pullback $\pi_!(p)$ of $p$ under $\pi$, which is exactly the homology class represented by the fiber. We have
		\begin{align*}
			\pi_!(p) = &D_E\circ \pi^*\circ D_B^{-1}(p)\\
			= &D_E\circ \pi^*(\lbrack B\rbrack^*)\\
		\end{align*}
		where $D_B$ denotes the Poincare dual map of $B$ and $\lbrack B\rbrack^*$ denotes the generator of $H^n(B)$. We apply the Gysin sequence as follows:
		\begin{align}\label{406}
			H^{n-2}(B)\stackrel{\smallsmile e}{\longrightarrow}H^n(B)\stackrel{\pi^*}{\longrightarrow}H^n(E).
		\end{align}
		Since $D_E$ is an isomorphism, it suffices to study the element $\pi^*(\lbrack B\rbrack^*)\in H^n(E)$. If $e=0$, then $\pi^*$ in (\ref{406}) is injective, and hence $\pi_!(p)$ is free. If $e\ne 0$, we assume $e = k\alpha$ with $\alpha$ a non-divisible element in $H^2(B)$. By Corollary 3.39 in \cite{ref11}, there exists $\beta\in H^{n-2}(B)$, such that $\alpha\smallsmile\beta = \lbrack B\rbrack^*$. This shows that the image of the first map in (\ref{406}) equals $k\mathbb{Z}\subset\mathbb{Z}\cong H^n(B)$, and the conclusion follows.
	\end{proof}
	
	\begin{lemma}\label{A5}
		Let $X^n$ be a non-compact manifold with $B^{n-2}$ as a codimension 2 deformation retract, with the retraction map denoted by $F:X\longrightarrow B$. Let $V$ be the tubular neighborhood of $B$ and let $Z=\partial V$ be an $S^1$ bundle over $B$. Let $\pi:Z\longrightarrow B$ be the bundle projection map. Then $F|_{X\backslash V}$ can be lifted to a map $\Tilde{F}:X\backslash V\longrightarrow Z$, such that the restriction of $\Tilde{F}$ on $Z$ is homotopic to the identity.
	\end{lemma}
	\begin{proof}
		Let $f=F|_Z$. We first lift $f$ to a map $\Tilde{f}:Z\longrightarrow Z$, such that $f=\pi\circ\Tilde{f}$ and $\Tilde{f}$ is homotopic to the identity. Fix a Riemannian metric on $X$ such that $V=\lbrace x\in X| d(x,B)\le\delta\rbrace$ for some small number $\delta$. Define the homotopy $h_t:Z\longrightarrow V\hookrightarrow X$ by
		\begin{align*}
			(p,x)\mapsto (p,\delta tx).
		\end{align*}
		Here $p=\pi(x)$, and $\delta tx$ means the scaling by the coefficient $\delta t$ in the disk fiber over $p$ in the bundle $V$. Then we have
		\begin{align*}
			&h_0(x) = \pi(x),\\
			&h_1(x) = x.
		\end{align*}
		Therefore, $H_t = F\circ h_t$ satisfies
		\begin{align*}
			&H_0(x) = \pi(x),\\
			&H_1(x) = f(x).
		\end{align*}
		Since $\pi:Z\longrightarrow B$ has the homotopic lifting property, and $H_0 = \pi$ can be lifted to $id:Z\longrightarrow Z$, we obtain a homotopy $\Tilde{H}_t:Z\longrightarrow Z$ that satisfies
		\begin{align*}
			&\Tilde{H}_0 = id,\\
			&H_t = \pi\circ\Tilde{H}_t.
		\end{align*}
		Let $\Tilde{f} = \Tilde{H}_1$, then $f=H_1=\pi\circ\Tilde{H}_1=\pi\circ\Tilde{f}$. This shows that $\tilde{f}$ has the desired property.
		
		We next lift $F$ to a map $\Tilde{F}$, such that $\Tilde{F}|_Z = \Tilde{f}$. Since $S^1$ is a $K(\mathbb{Z},1)$ space, the only obstruction of this lifting lies in $H^2(X\backslash V, Z, \mathbb{Z})=H^2(X,V,\mathbb{Z}) = 0$. This shows $F$ has the desired lifting, thereby completing the proof of Lemma \ref{A5}.
	\end{proof}
	
	\begin{lemma}\label{A3}
		Let $H,K$ be subgroups of $G$. If $K$ has finite index in $G$, then $H\cap K$ has finite index in $H$.
	\end{lemma}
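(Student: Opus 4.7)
The plan is to construct an injection from the coset space $H/(H\cap K)$ into $G/K$; since $G/K$ is finite by hypothesis, this will immediately bound $[H:H\cap K]$.

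First I would define the map
\begin{align*}
\Phi: H/(H\cap K) \longrightarrow G/K, \qquad h(H\cap K)\longmapsto hK.
\end{align*}
The next step is the routine verification that $\Phi$ is well-defined: if $h_1(H\cap K) = h_2(H\cap K)$ for $h_1,h_2\in H$, then $h_2^{-1}h_1\in H\cap K\subset K$, so $h_1K=h_2K$.

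The key step is injectivity of $\Phi$. Suppose $h_1K = h_2K$ with $h_1,h_2\in H$. Then $h_2^{-1}h_1\in K$, and since $h_1,h_2\in H$, also $h_2^{-1}h_1\in H$. Hence $h_2^{-1}h_1\in H\cap K$, which forces $h_1(H\cap K) = h_2(H\cap K)$, proving $\Phi$ is injective. Consequently $[H:H\cap K]\le [G:K]<\infty$, which is the desired conclusion.

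There is no real obstacle here; the only thing to be careful about is that $\Phi$ is not assumed to be a homomorphism (the target $G/K$ is merely a set of cosets unless $K\trianglelefteq G$), so the argument is carried out purely at the level of coset spaces via the orbit-stabilizer viewpoint: $H$ acts on $G/K$ by left multiplication, the stabilizer of the coset $K\in G/K$ is exactly $H\cap K$, and the orbit has size at most $[G:K]$.
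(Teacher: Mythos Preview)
Your proof is correct and is essentially the same argument as the paper's: both exploit the natural correspondence between left cosets of $H\cap K$ in $H$ and those cosets of $K$ in $G$ that meet $H$. The only cosmetic difference is direction---the paper builds the surjection $F:\{gK:gK\cap H\neq\varnothing\}\to H/(H\cap K)$, $gK\mapsto g_0(H\cap K)$ for any $g_0\in gK\cap H$, which is precisely the inverse of your injection $\Phi$.
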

	\begin{proof}
		We write
		\begin{align*}
			A = \lbrace gK|gK\cap H\ne\emptyset\rbrace.
		\end{align*}
		For any $g_1,g_2\in gK\cap H$, we have $g_1K=g_2K$. It follows that $g_2^{-1}g_1\in K$, $g_2^{-1}g_1\in H\cap K$, and $g_1(H\cap K)=g_2(H\cap K)$. This yields a well-defined map:
		\begin{align*}
			F\colon(G\slash K)\cap A &\longrightarrow H\slash H\cap K,\\
			gK&\mapsto g_0(K\cap H),
		\end{align*}
		where $g_0$ is an element in $gK\cap H$. The map $F$ is evidently surjective. Combined with $A\subset G\slash K$ being finite, we see that $H\slash H\cap K$ is finite.
	\end{proof}
	
	\section{Volume growth estimates on fiber bundles}	
	
	In this appendix, we investigate the volume growth of the non-compact manifold $\Tilde{E}$, the covering space of $E$ in Theorem \ref{thm2}, induced by the universal covering $\Tilde{X}\longrightarrow X$, with $X$ a Cartan-Hadamard manifold. The result is used in the rigidity part of the proof of Theorem \ref{thm2}.

	\begin{lemma}\label{lem10}
		Let $(B^m,g_B)$ be a closed Riemannian manifold with $B^m = X^m\#M^m$, where $X$ is a Cartan-Hadamard manifold. Let $(E,g_E)$ be an $F$ bundle over $B$, with $F$ a compact manifold.  Let $\tilde{B}$ and $\tilde{E}$ be constructed as in \eqref{31} and \eqref{33}, then
		\begin{align*}
			r(\tilde{E},g_{\tilde{E}})\ge m,
		\end{align*}
		where $r$ is the volume growth rate function defined in Definition \ref{def4}.
	\end{lemma}
	\begin{proof}
		By the construction of $E$, we see that $G = \pi_1(X)$ acts properly and co-compactly on $\tilde{E}$. Let $S$ be a generator set of $G$ and $d_S$ be the corresponding word metric. By the Svarc-Milnor Lemma, we see that $(\tilde{E},d_{\tilde{E}})$ is quasi-isometric to $(G,d_S)$. 
		
		Let $\phi:(G,d_S)\longrightarrow (\tilde{E},d_{\tilde{E}}) $ and $\phi':(\tilde{E},d_{\tilde{E}})\longrightarrow (G,d_S)$ be the quasi-isometries, then we have constants $C,D>0$, such that
		\begin{align*}
			&C^{-1}d_S(x,y)-D<d_{\tilde{E}}(\phi(x),\phi(y))<Cd_S(x,y)+D,\\
			&C^{-1}d_{\tilde{E}}(z,w)-D<d_{S}(\phi'(z),\phi'(w))<Cd_{\tilde{E}}(z,w)+D
		\end{align*}
		hold for all $x,y\in G$ and $z,w\in \tilde{E}$. Consequently, for $R$ sufficiently large, it holds that
		\begin{align*}
			\phi(B(e,(2C)^{-1}R,G))\subset B(p,R,\tilde{E})\subset \phi(B(e,2CR,G)),
		\end{align*}
		where $e\in G$ is the unit and $p\in\tilde{E}$ represents a base point. Therefore, $f_1(R) = \mathrm{Vol}(B(p,R,\tilde{E},g_{\tilde{E}})$ is equivalent to the growth function $f_2(R)$ of $G$.
		
		Let $g_{CH}$ be a metric on $X$ with nonpositive sectional curvature and let $\tilde{g}_{CH}$ be the induced metric on $\tilde{X}$. With the same argument as above, we see that $f_3(R) = \mathrm{Vol}(B(q,R,\tilde{X},\tilde{d}_{CH})$ is also equivalent to the growth function $f_2(R)$ of $G$. The proof is completed by the volume comparison theorem.
	\end{proof}

\end{document}